\documentclass[11pt,reqno]{amsart}
\usepackage{amssymb}
\usepackage{enumerate}
\usepackage{amsmath,amssymb,amsfonts,amsthm,
	latexsym, amscd, epsfig, epic,color}
\usepackage{epstopdf}
\usepackage{mathtools}
\usepackage{youngtab}
\usepackage{extarrows}
\usepackage{enumitem}
\usepackage{enumerate}
\usepackage{mathrsfs}
\usepackage{eucal}
\usepackage{eufrak}
\usepackage{xspace}
\usepackage{a4wide}
\usepackage{colordvi}
\usepackage{comment}
\usepackage{hyperref,eucal}
\usepackage{tabularx}
\usepackage{tikz}\usetikzlibrary{matrix,arrows}
\usepackage{ytableau} \usepackage{chemfig}
\usepackage{float}
\usepackage{youngtab}
\usepackage{mathrsfs}
\usepackage{caption}
\usepackage{epsfig}
\usepackage{stfloats}
\usepackage{booktabs}
\usepackage{changepage}
\usepackage{url}
\usepackage{graphicx}
\theoremstyle{plain}

\theoremstyle{plain}

\newtheorem{theorem}{Theorem}[section]
\newtheorem{corollary}{Corollary}[section]

\newtheorem{lemma}{Lemma}[section]
\newtheorem{proposition}{Proposition}[section]

\newtheorem{definition}{Definition}[section]

\theoremstyle{example}
\newtheorem{example}{Example}[section]

\theoremstyle{openproblem}

\def\T{\CMcal{T}}
\def\M{\CMcal{M}}

\def\A{\mathcal{A}}

\def\S{\CMcal{S}}
\def\Q{\CMcal{Q}}
\def\inv{\mathsf{inv}}
\def\coinv{\mathsf{coinv}}
\def\quinv{\mathsf{quinv}}

\def\maj{\mathsf{maj}}

\def\Des{\mathrm{Des}}
\def\des{\mathsf{des}}
\def\South{\mathrm{South}}
 \def\arm{\mathsf{arm}} \def\leg{\mathsf{leg}}   

 \def\Z{\mathbb{Z}}
\def\dg{\mathrm{dg}}
\def\minv{\mathsf{mixinv}}
\def\w{\mathbf{w}}

\def\quadcoinv{\mathsf{quadcoinv}}
\def\quadinv{\mathsf{quadinv}}

\def\quadinv{\mathsf{quadinv}}  \def\rev{\mathsf{rev}}

\makeatletter
\@namedef{subjclassname@2020}{\textup{2020} Mathematics Subject Classification}
\makeatother

\theoremstyle{remark}

\newtheorem{remark}{Remark}
\numberwithin{equation}{section}
\numberwithin{figure}{section}

\title[Combinatorial formulas by superizations]{Combinatorial formulas for Macdonald polynomials by superizations}

\author{Emma Yu Jin}
\address{School of Mathematical Sciences, Xiamen University, Xiamen 361005, China}
\email{yjin@xmu.edu.cn}

\author{Xiaowei Lin}
\address{School of Mathematical Sciences, Xiamen University, Xiamen 361005, China}
\email{linxiaoweiqing@126.com}

\subjclass[2020]{Primary: 05E05; Secondary: 05A19, 33D52}

\keywords{Macdonald polynomials, bijections, combinatorial statistics, monomial expansions, superizations}

\begin{document}

\begin{abstract}
In this paper, we derive new combinatorial formulas for symmetric Macdonald polynomials $P_{\lambda}(X;q,t)$ and non-symmetric Macdonald polynomials $E_{\gamma}(X;q,t)$, in terms of several new statistics and the major index, for a partition $\lambda$ and a weak composition $\gamma$. 

Compared to previous formulas, these new formulas contain the fewest terms and lead to explicit $(q,t)$-formulas for the coefficients in the monomial expansion of $P_{\lambda}(X;q,t)$. In particular, the combinatorial formula for $E_{\gamma}(X;q,t)$ extends the one for $E_{\lambda}(X;q,t)$ indexed by a partition $\lambda$, due to Corteel, Mandelshtam and Williams (2022). 
Three existing formulas for $P_{\lambda}(X;q,t)$ established by Corteel, Mandelshtam and Williams (2022), by Corteel, Haglund, Mandelshtam, Mason and Williams (2022), and by Mandelshtam (2025) are recovered. 

Our proof relies on two new statistics on super fillings, employing the superization formulas of Haglund--Haiman--Loehr (2005) and Ayyer--Mandelshtam--Martin (2023), together with our recent approach to modified Macdonald polynomials.


\end{abstract}	

\maketitle
	
	\section{Introduction and main results}\label{S:intro}
    {\em Symmetric Macdonald polynomials} $P_{\lambda}(X;q,t)$ \cite{Mac88,Mac95} indexed by partitions $\lambda$ are symmetric and orthogonal polynomials in infinitely many variables $X=(x_1,x_2,\ldots)$ with coefficients in the field $\mathbb{Q}(q,t)$ of rational functions in two variables $q$ and $t$. Several important families of symmetric polynomials are specializations of Macdonald polynomials, such as Schur functions $s_{\lambda}(X)=P_{\lambda}(X;t,t)$, Hall-Littlewood polynomials $P_{\lambda}(X;0,t)$ and $q$-Whittaker polynomials $P_{\lambda}(X;q,0)$. 
    
    \begin{definition}[Symmetric Macdonald polynomials]\label{Def:P}
    For a partition $\lambda$, the Macdonald polynomial $P_{\lambda}(X;q,t)$ is defined as the unique symmetric function over the field $\mathbb{Q}(q,t)$ such that 
    \begin{align}\label{E:P_mono}
    	P_{\lambda}(X;q,t)=m_{\lambda}(X)+\sum_{\mu<\lambda}c_{\lambda\mu}(q,t)m_{\mu}(X),
    \end{align}
    for some $c_{\lambda\mu}(q,t)\in\mathbb{Q}(q,t)$, and $\langle P_{\lambda},P_{\mu}\rangle=0$
    if $\lambda\ne \mu$. 
    \end{definition}
    Here $m_{\lambda}(X)$ is the monomial symmetric function and $<$ denotes the dominance order on weak compositions. A weak composition $\alpha=(\alpha_1,\ldots,\alpha_n)$ is a finite sequence of non-negative integers, and we write $|\alpha|=\alpha_1+\cdots +\alpha_n$. The partial order $\le$ is defined on weak compositions $\alpha$ and $\beta$ satisfying $|\alpha|=|\beta|$ so that 
    \begin{align}\label{E:domi}
    	\alpha\le \beta \quad\,\mbox{if and only if}\quad\,\alpha_1+\cdots+\alpha_k\le \beta_1+\cdots+\beta_k
    \end{align}
    for all $1\le k\le n$. The inner product $\langle\cdot,\cdot\rangle$ in Definition \ref{Def:P} is defined in terms of power-sum symmetric functions $p_{\lambda}(X)$, namely,
    \begin{align*}
    	\langle p_{\lambda},p_{\mu}\rangle=\delta_{\lambda\mu}\,z_{\lambda}\,\prod_{i\ge 1}\frac{1-q^{\lambda_i}}{1-t^{\lambda_i}},
    \end{align*}
    where $z_{\lambda}=\prod_{i\ge 1}i^{m_i}m_i!$ and $m_i$ counts the multiplicity of part $i$ in the partition $\lambda$. 
    \begin{example}\label{Example:p}
    	For $\lambda\in \{(1,1),(2)\}$ and $X=(x_1,x_2,\ldots)$,
    	\begin{align*}
    		P_{11}(X;q,t)=m_{11}(X) \,\,\mbox{ and }\,\,
    		P_{2}(X;q,t)=m_{2}(X)+\frac{(1+q)(1-t)}{1-qt}m_{11}(X).
    	 \end{align*}
    \end{example}
Subsequently, non-symmetric Macdonald polynomials $E_{\gamma}(X;q,t)$, indexed by weak compositions $\gamma\in \mathbb{N}^n$ in $n$ variables $X=(x_1,\ldots,x_n)$, were introduced and developed by Opdam, Cherednik, Macdonald and Haiman \cite{Cherednik:95,Haiman:06,Mac94,Opdam:95}. 

Let $\gamma^+$ be the partition formed by the positive parts of a weak composition $\gamma$. The symmetric Macdonald polynomial $P_{\lambda}(X;q,t)$ is the unique symmetric polynomial in the space $\mathbb{Q}(q,t)\{E_{\gamma}(X;q,t):\gamma^+=\lambda\}$ such that $[x^{\lambda}]P_{\lambda}(X;q,t)=1$. We adopt the notations from \cite{HHL08} to define non-symmetric Macdonald polynomials. Let
\begin{align*}
	\Delta=\prod_{i<j}\frac{(x_ix_j^{-1};q)_{\infty}(qx_jx_i^{-1};q)_{\infty}}
	{(tx_ix_j^{-1};q)_{\infty}(tqx_jx_i^{-1};q)_{\infty}}=\prod_{i<j}\prod_{k\ge 0}
	\frac{(1-q^kx_ix_j^{-1})(1-q^{k+1}x_jx_i^{-1})}{(1-tq^kx_ix_j^{-1})(1-tq^{k+1}x_jx_i^{-1})},
\end{align*}
and let $\Delta_1=\Delta/([x^0]\Delta)$. The {\em Cherednik inner product} is defined by
\begin{align*}
	\langle f,g \rangle_{q,t}=[x^0](f(X;q,t)g(X^{-1};q^{-1},t^{-1})\Delta_1),
\end{align*}
where $X^{-1}=(x_1^{-1},\ldots,x_n^{-1})$. 
We define a partial order $\preceq$ on weak compositions $\gamma$ and $\nu$ with $|\gamma|=|\nu|$, so that
\begin{align*}
	\nu\prec \gamma \quad \mbox{if and only if}\quad \nu^+<\gamma^+ \textrm{ or }\, (\nu^+=\gamma^+\,\mbox{ and }\, \nu<\gamma),
\end{align*}
where $<$ is the dominance order defined in (\ref{E:domi}). 
\begin{definition}[Non-symmetric Macdonald polynomials]\label{Def: nonsym}
	{\em Non-symmetric Macdonald polynomials} $E_{\gamma}(X;q,t)$, indexed by weak compositions $\gamma$ in $n$ variables $X=(x_1,\ldots,x_n)$, are the unique polynomials satisfying
	\begin{align}\label{E:mon1}
		E_{\gamma}(X;q,t)=x^{\gamma}+\sum_{\nu\prec \gamma}d_{\nu\gamma}(q,t)x^{\nu},
	\end{align}
	for some $d_{\nu\gamma}(q,t)\in \mathbb{Q}(q,t)$, and $	\langle E_{\gamma}, E_{\nu}\rangle_{q,t}=0$ for $\gamma\ne \nu$. 
\end{definition}
\begin{example}\label{Example:e}
	For $\gamma\in \{(1,1,0),(0,1,1),(1,0,1)\}$ and $X=(x_1,x_2,x_3)$, we have
	\begin{align*}
		E_{110}(X;q,t)&=x_1x_2,\quad
		E_{011}(X;q,t)=x_2x_3+\frac{1-t}{1-qt}(x_1x_2+x_1x_3),\\
		E_{101}(X;q,t)&=x_1x_3+\frac{1-t}{1-qt^2}x_1x_2.
	\end{align*}
\end{example}
Considerable work has been devoted to finding formulas for Macdonald polynomials. However, to the best of our knowledge, no formula has been known to be both combinatorial and explicit for the coefficients $c_{\lambda\mu}(q,t)=[m_{\mu}]P_{\lambda}$. At present, there are primarily three combinatorial models related to both symmetric and non-symmetric Macdonald polynomials. 
\begin{enumerate}
	\item Young tableaux model: Haglund, Haiman and Loehr proved the first combinatorial formulas for Macdonald polynomials $P_{\lambda}(X;q,t)$ and $E_{\gamma}(X;q,t)$, in terms of tableaux statistics inversions and the major index \cite{HHL04,HHL08}. 
	Mandelshtam \cite{O:24} introduced probabilistic operators acting on non-attacking tableaux to prove a new compact formula for $P_{\lambda}(X;q,t)$.\\
	\item Alcove walk model: Ram and Yip \cite{RY:11} established uniform combinatorial formulas for Macdonald polynomials for all Lie types using the alcove walk model developed by Schwer \cite{Schwer} and Ram \cite{Ram}. Lenart improved the Ram--Yip formula for $P_{\lambda}(X;q,t)$ by grouping terms into equivalence classes and summing their contributions, under the assumption that all parts of $\lambda$ are distinct \cite{Lenart}. \\
	\item Multiline queues model: Corteel, Mandelshtam and Williams introduced a weighted generating function for multiline queues to prove a new combinatorial formula for $P_{\lambda}(X;q,t)$ \cite{CMW22}. In particular they established the multiline queue formula for $E_{\gamma}(X;q,t)$ when $\gamma$ is a partition. Subsequently, Haglund, Mandelshtam, Mason and Williams \cite{CHO22} derived a tableau formula for $P_{\lambda}(X;q,t)$ by applying the multiline queue formula and
	its relation to permuted-basement Macdonald polynomials \cite{Alex:16,Fer:11} as stated in \cite{CMW22}. 
\end{enumerate}
On the other hand, non-combinatorial formulas for Macdonald polynomials include a matrix product formula for $P_{\lambda}(X;q,t)$ by Cantini, de Gier and Wheeler \cite{CdGW:15}, and sooner after, this yields a sum formula for $P_{\lambda}(X;q,t)$ containing the action of Hecke generators on the ring of polynomials \cite{dGW:15}. Meanwhile, Borodin and Wheeler \cite[Theorem 1.3]{BW:22} constructed the partition functions of integrable vertex models to deduce a formula for $E_{\gamma}(X;q,t)$, which turns out to be equivalent to the formula given by Haglund, Haiman and Loehr \cite[Theorem 3.5.1]{HHL08}. 

The objective in this paper is to develop new effective formulas for $P_{\lambda}(X;q,t)$ and $E_{\gamma}(X;q,t)$. Our main results are new combinatorial formulas for both of them, which give rise to computable formulas for the coefficients $c_{\lambda\mu}(q,t)=[m_{\mu}]P_{\lambda}$. Before stating the main results, we present the starting point of our research line on Macdonald polynomials.

Our journey into Macdonald polynomials begins with a combinatorial formula for modified Macdonald polynomials $\tilde{H}_{\lambda}(X;q,t)$, a variant of symmetric Macdonald polynomials introduced by Garsia and Haiman \cite{GH93}. Haglund, Haiman and Loehr \cite{HHL04} remarkably related these polynomials to the combinatorial statistics on fillings of a Young diagram.
\begin{theorem}\cite[Theorem 2.2]{HHL04}
For a partition $\lambda$, let $\T(\lambda)$ be the set of positive fillings of the Young diagram $\dg(\lambda)$ of $\lambda$. Then, 
\begin{align}\label{E:mmp1}
	\tilde{H}_{\lambda}(X;q,t)=\sum_{\tau\in \T(\lambda)}x^{\tau}t^{\maj(\tau)}q^{\inv(\tau)}.
\end{align}
\end{theorem}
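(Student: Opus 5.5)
This is the Haglund--Haiman--Loehr theorem, and I would follow their strategy: show that the right-hand side of (\ref{E:mmp1}), call it $C_\lambda(X;q,t)$, satisfies the axioms that characterize $\tilde{H}_{\lambda}$. Those axioms are:
\begin{enumerate}
\item[(i)] $\tilde{H}_\lambda[X(1-q);q,t]\in\mathbb{Q}(q,t)\{s_\mu:\mu\ge\lambda\}$;
\item[(ii)] $\tilde{H}_\lambda[X(1-t);q,t]\in\mathbb{Q}(q,t)\{s_\mu:\mu\ge\lambda'\}$;
\item[(iii)] $\langle \tilde{H}_\lambda,s_{(n)}\rangle=1$.
\end{enumerate}
Garsia and Haiman showed these three conditions determine $\tilde H_\lambda$ uniquely. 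Condition (iii) is immediate: the coefficient of $x_1^n$ comes only from the constant filling, which has $\maj=\inv=0$.

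The first substantial step is to prove that $C_\lambda$ is symmetric. For this I would superize. Replace positive fillings by super fillings with letters from a signed alphabet $1<\bar 1<2<\bar 2<\cdots$, and define $\inv$ and $\maj$ with the usual convention that equal negative letters count as descents or inversions in the opposite direction. Writing $C_\lambda$ in terms of LLT-type structure, I would group fillings by their descent set $D$. For fixed $D$, the sum of $q^{\inv}x^{\tau}$ over fillings is a sum over inversion-graph configurations, which is an LLT polynomial for a tuple of ribbons (one ribbon per column, with shape determined by $D$). LLT polynomials are symmetric, for instance by Lascoux--Leclerc--Thibon or by a direct involution on fillings of two adjacent letters $i,i+1$. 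Hence $C_\lambda$ is symmetric, and moreover its superization $\tilde C_\lambda(X,Y)$ equals the plethystic evaluation $C_\lambda[X-\epsilon Y]$ up to sign. This identification is the standard fact that LLT polynomials superize correctly.

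Next I would verify (ii). Plethystically, $C_\lambda[X(1-t)]$ corresponds to super fillings weighted with $t^{\maj}$ and the sign convention. I would construct a sign-reversing involution that cancels every super filling in which some box and the box directly below it have letters of equal absolute value (choosing a canonical such position, e.g.\ by reading order), and that flips the sign of that letter while preserving $\inv$ and $\maj$ up to the needed factor. The survivors have distinct absolute values down each column, so the monomial degree in any single variable is at most the number of columns... more precisely, the surviving monomials are dominated so that only $s_\mu$ with $\mu\ge\lambda'$ can appear. Condition (i) follows the same way with rows and the $q$-statistic. Here I would use the alternative HHL description of $\inv$, in which the $\maj$/$\inv$ roles interchange via the quinv-type symmetry $\tilde H_\lambda(X;q,t)=\tilde H_{\lambda'}(X;t,q)$ at the level of the conditions, and cancel equal letters in the same row.

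The main obstacle is this involution for (ii), or (i). Flipping a letter's sign must not change $\maj$ and $\inv$ except by an exact compensating factor, and this needs a careful case analysis of the triples (boxes $u$, $v$ in a row and the box below $u$). I would try first the HHL choice: take the last pair in reading order with equal absolute values in a vertical adjacency, and toggle the sign of the upper entry. I would then check the invariance of $\inv$ triple by triple and of $\maj$ through the arm-length term.
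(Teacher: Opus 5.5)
The paper does not prove this statement; it is quoted from \cite{HHL04}. So your outline has to stand on its own, and its key step, the involution for (ii), cannot work.

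Write (ii) in the equivalent form $C_\lambda[X(1-t^{-1})]\in\mathrm{span}\{m_\nu:\nu\le\lambda\}$, by applying $\omega$ and using $\omega(f[X(1-t)])=(-t)^{n}f[X(1-t^{-1})]$. After superization, each negative letter then carries weight $-t^{-1}$. Now flip $\sigma(u)$ from positive to negative, where $|\sigma(u)|=|\sigma(\South(u))|$:
\begin{itemize}
\item this creates a descent at $u$, so $\maj$ rises by $\leg(u)+1$;
\item it kills every inversion triple formed by $\sigma(u)$, $\sigma(\South(u))$ and a cell to the right of $u$, so $\inv$ drops by $\arm(u)$.
\end{itemize}
The term is therefore multiplied by $-t^{\leg(u)}q^{-\arm(u)}$, not by $-1$, unless $\leg(u)=\arm(u)=0$. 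This is exactly the computation that produces the factors $1-q^{\leg+1}t^{\arm+1}$ for restricted cells in the integral form (compare the proof of \eqref{eqthm1.2}).

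Worse, no sign-flip involution of any kind can prove (ii). Sign flips preserve $|\sigma|$, so they would force the signed weight of each class $\{\sigma:|\sigma|=\tau\}$ to vanish. Take a single column of height $3$, where $\inv\equiv 0$. The eight super fillings with absolute values $a,b,a$ from top to bottom ($a<b$) all have $\maj=2$, so they contribute $t^{2}(1-t^{-1})^{3}x_a^2x_b\neq 0$. This is cancelled only by the class $b,a,a$. Hence any cancellation argument for the column condition must rearrange entries within columns while keeping track of the inversion triples between columns; for one column this is a MacMahon--Foata type passage from $\maj$ to $\inv$. Nothing in your plan supplies this. Two smaller points: excluding only vertically adjacent repeats would not make column entries distinct, and monomial support dominated by $\lambda$ gives $\{s_\mu:\mu\ge\lambda'\}$ only through the $\omega$, $t\mapsto t^{-1}$ step you skipped.

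For (i) you cannot appeal to $\tilde H_\lambda(X;q,t)=\tilde H_{\lambda'}(X;t,q)$. That identity is a property of $\tilde H_\lambda$, which is what you are trying to identify with $C_\lambda$, and no combinatorial proof of $C_\lambda(X;q,t)=C_{\lambda'}(X;t,q)$ is available, so the step is circular. You also do not need it, because (i) is exactly the condition where a sign flip works directly. For two attacking cells with equal absolute values, whether they form an inversion depends only on the sign of the one read first. So flipping a suitably chosen attacking entry (the last in reading order, as in \cite[Lemma 5.1]{HHL04}, reused in Lemma \ref{lem1}) changes $\inv$ by exactly one and fixes $\maj$. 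With negative letters weighted $-q^{-1}$, the survivors are non-attacking, hence have distinct absolute values in every row, so their support is dominated by $\lambda'$; that is (i). In short, you attached the working involution to the wrong axiom, and the major-index condition (ii), which is the real difficulty, is left unproved.
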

The definitions of combinatorial statistics such as inversion and the major index are postponed to Section \ref{S:2}. Very recently, both authors introduced a set $\A$ of sixteen statistics on positive fillings and established new combinatorial formulas for modified Macdonald polynomials $\tilde{H}_{\lambda}(X;q,t)$; see Theorem \ref{T:eta}. Two fillings (or tableaux) $\sigma$ and $\tau$ of a given Young diagram are called {\em row-equivalent}, denoted by $\sigma\sim\tau$, if for every $i$, the multisets of entries in the $i$th rows of $\sigma$ and $\tau$ coincide. Let $[\sigma]$ denote the row-equivalence class of $\sigma$. 
\begin{theorem}\cite[Theorem 1.1]{JL24}\cite[Theorem 2]{JL25}\label{T:eta}
	For a partition $\lambda$, let $\sigma$ be a positive filling of the Young diagram of $\lambda$. Then, for any statistic $\eta\in \A$, where $\A$ denotes the set of sixteen statistics in Definition \ref{Def:eta3}, the following identity holds:
	\begin{align}\label{eqthm1}
		\sum_{\tau\in[\sigma]}t^{\maj(\tau)}q^{\eta(\tau)}=\sum_{\tau\in[\sigma]}t^{\maj(\tau)}q^{\inv(\tau)}
		=\sum_{\tau\in[\sigma]}t^{\maj(\tau)}q^{\quinv(\tau)}.
	\end{align}
	Consequently,
	\begin{align}\label{E:mmp21}
		\tilde{H}_{\lambda}(X;q,t)=\sum_{\tau\in \T(\lambda)}x^{\tau}t^{\maj(\tau)}q^{\quinv(\tau)}
=\sum_{\tau\in \T(\lambda)}x^{\tau}t^{\maj(\tau)}q^{\eta(\tau)}.
	\end{align}
\end{theorem}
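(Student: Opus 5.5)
Since $\maj$ is constant on neither side but $x^{\tau}$ is constant on a row-equivalence class, the consequence \eqref{E:mmp21} follows from \eqref{eqthm1} by partitioning $\T(\lambda)$ into classes $[\sigma]$. Combining with \eqref{E:mmp1}, it suffices to prove \eqref{eqthm1} class by class. So the plan is to fix a row-equivalence class $[\sigma]$ and show that the joint distribution of $(\maj,\eta)$ on $[\sigma]$ equals that of $(\maj,\inv)$, and separately that of $(\maj,\quinv)$.

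The approach is to reduce to a local, row-by-row statement, following the Haglund--Haiman--Loehr proof of symmetry of \eqref{E:mmp1}. The descents contributing to $\maj$ and the attacking triples contributing to $\inv$, $\quinv$ and each $\eta\in\A$ decompose over pairs of adjacent rows. Within each pair, the count is over triples or quadruples of cells (``queue''-type configurations) whose contribution depends only on the relative order of entries. I will process the rows from the top down. Suppose the lower row's contents are fixed as a word. Permuting the entries of the upper row within its multiset then changes $\maj$ only through the descent set between the two rows. For each fixed descent pattern, the distribution of the $q$-statistic restricted to that pair of rows should be the same for $\inv$, $\quinv$ and $\eta$. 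Concretely, I will construct, for each pair of adjacent rows, a bijection on the rearrangements of the upper row that preserves the descent set relative to the row below and carries $\eta$-triples to $\inv$-triples. Composing these bijections row by row yields a $\maj$-preserving bijection $\Phi:[\sigma]\to[\sigma]$ with $\eta(\tau)=\inv(\Phi(\tau))$. An alternative at the single-row level is to show that both sides equal the same product of $q$-multinomial-type factors. This is the standard computation: for a fixed lower row and a fixed set of ``descent'' positions, the generating function of the upper row's inversions factorizes as a product of $q$-binomials.

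The main obstacle is the local step. The sixteen statistics in $\A$ differ in how they orient triples (reading order, comparison direction, where ties are broken), so a single bijection will not work uniformly. My first attempt will be to exploit symmetries generating $\A$: reversal of the reading order in a row, complementation of values, and the $\inv\leftrightarrow\quinv$ duality. Each is an involution on fillings that preserves the descent structure up to a controllable change. This should reduce the sixteen cases to a few base cases, perhaps $\inv$ and $\quinv$ themselves. The base equality $\inv\sim\quinv$ jointly with $\maj$ I would prove by a two-row insertion argument. One inserts the upper-row entries in increasing order into cells. Each insertion has a set of available positions of fixed size, determined by how many entries below are larger or smaller. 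Under both statistics the $q$-weight of choosing the $k$-th available slot is $q^{k-1}$ (or $q^{m-k}$). This gives identical factorized generating functions, so the equality of the distributions is established.
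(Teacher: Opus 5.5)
Your reduction of \eqref{E:mmp21} to a class-by-class statement is fine, but the construction of $\Phi$ has a genuine gap.

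First, you only ever rearrange the upper row of a pair while the lower row is frozen, and this does not compose. Permuting row $r+1$ also changes its descents and triples/quadruples with row $r+2$. Your claim that it ``changes $\maj$ only through the descent set between the two rows'' holds only for the top row. So in your top-down order, the step for the pair $(r+1,r)$ undoes what was arranged for $(r+2,r+1)$.

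More decisively, your scheme never moves the bottom row, and no $\maj$-preserving bijection fixing the bottom row can turn $\eta\in\A^+$ (or $\quinv$) into $\inv$. Take the $2\times 2$ square with bottom row $(1,3)$ and top row a rearrangement of $\{2,4\}$. Top row $(2,4)$ gives $(\maj,\inv)=(2,0)$ but $(\maj,\quinv)=(\maj,\eta)=(2,1)$. Top row $(4,2)$ gives $(1,1)$ for all three statistics. So, restricted to this bottom row, the two sides of \eqref{eqthm1} are $t^2+qt$ and $qt^2+qt$. They agree only after adding the fillings with bottom row $(3,1)$.

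The discrepancy is a boundary effect that row-pair bookkeeping misses. $\quinv$ and $\eta$ count coinversions of the top row against the adjoined $0$'s, whereas $\inv$ counts inversions of the bottom row against the adjoined $\infty$'s. The same example refutes your two-row base case. With the lower row fixed, the $\inv$ and $\quinv$ generating functions differ, so no slot-insertion argument can give identical factorizations. In this example even fixing the top row instead fails for $\inv$ versus $\quinv$.

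The missing idea is a way to reorder one row without disturbing any other row pair. In the cited proofs this is done by the flip operators of Section~\ref{S:3}. To exchange the entries of row $r$ in two adjacent columns of equal height, one exchanges the whole column segments from row $r$ down to (for $\zeta$, up to) a stopping row fixed by a local condition. Then $\maj$ is preserved and only the row pair at the starting end changes its count, by at most one (Lemmas~\ref{lem3.1} and~\ref{L:delta}). These flips are composed along reduced words, and they necessarily move lower rows.

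The comparison is also routed differently. Each $\eta\in\A^+$ is matched with $\quinv$, not $\inv$, by a bijection that fixes the top row and the number of descents in every row pair (Theorem~\ref{T:eta1}). It is built on two-row rectangles by sorting the top row and treating descent and ascent columns separately, uniformly in the axioms of Definition~\ref{Def:setS} (compare the proof of Theorem~\ref{thm6.1}). Triples between columns of unequal height are the same for $\eta$ and $\quinv$ and are preserved by the flips. The link between $\inv$ and $\quinv$ is a separate $\maj$-preserving bijection exchanging $(\inv,\quinv)$ on rectangles \cite[Theorem 6.1]{JL24}, which by the example must move both boundary rows.

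Finally, your symmetry reduction is unsupported. Complementing values exchanges descents and strict ascents, so it does not preserve $\maj$ and cannot by itself relate the sixteen statistics.
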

Inspired by Martin's multiline-queue formula for the stationary distribution of multitype asymmetric simple exclusion processes \cite[Theorem 3.4]{Martin:20}, Corteel, Haglund, Mandelshtam, Mason and Williams \cite{CHO22} introduced the queue inversion statistic ($\quinv$). The first equality of (\ref{E:mmp21}) was first proved by Ayyer, Mandelshtam and Martin \cite[Equation (2)]{AMM23}.

Our first main result (Theorems \ref{T:main1} and \ref{cor:1}) is two new combinatorial formulas for $P_{\lambda}(X;q,t)$ via superization, building on (\ref{E:mmp21}). For a partition $\lambda=(\lambda_1,\ldots,\lambda_k)$, let $\lambda'$ be the transpose of $\lambda$. Let 
\begin{align}\label{E:nlambda}
	n(\lambda)&=\sum_{1\le i\le k}(i-1)\lambda_i=\sum_{1\le i\le \lambda_1}\binom{\lambda'_i}{2},
\end{align}
Given a statistic $\vartheta$, define its complement $\overline{\vartheta}$ by
\begin{align}\label{E:bar_eta}
	\overline{\vartheta}(\sigma)=n(\lambda)-\vartheta(\sigma).
\end{align}
Furthermore, let $\alpha(\sigma)$ be the number of quadruples $(z,w,u,v)$ with repetitions that are counted by $\bar{\eta}(\sigma)$, and set
\begin{align}\label{E:eta2}
	\eta^{\circ}(\sigma)=\bar{\eta}(\sigma)-\alpha(\sigma). 
\end{align}
Given a filling $\tau$, let 
\begin{align}
	\label{E:cqt}c_{\tau}(q,t)=\prod_{\tau(u)\ne\tau(\South(u)) \atop \textrm{ and}\, u \,\not\in \,\textrm{row }1}  \frac{1-t}{1-q^{\leg(u)+1} t^{\overline{\arm}(u)+1}}.
\end{align}
\begin{theorem}\label{T:main1}
	For a partition $\lambda$ and for any statistic $\eta\in \A^+$, where $\A^+\subseteq \A$ denotes the set of eight statistics in Definition \ref{Def:eta1} and Lemma \ref{L:setSi},
	we have
	\begin{align}\label{eqthm3.3}
		P_{\lambda}(X;q,t)=\sum_{\sigma}
		q^{\maj(\sigma)}t^{\eta^{\circ}(\sigma)}c_{\sigma}(q,t)\,x^{\sigma}, 
	\end{align}
	where the sum is taken over all non-attacking and top-row increasing tableaux $\sigma$ of the Young diagram $\dg(\lambda')$ of $\lambda$. 
\end{theorem}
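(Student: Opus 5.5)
The strategy is to follow the Haglund--Haiman--Loehr route from $\tilde{H}_\lambda$ to $P_\lambda$, with $\inv$ replaced by the statistic $\eta$, which is allowed by Theorem \ref{T:eta}. We start from the integral form $J_\lambda(X;q,t)=c_\lambda(q,t)P_\lambda(X;q,t)$, where $c_\lambda(q,t)=\prod_{u}(1-q^{\arm(u)}t^{\leg(u)+1})$. Its plethystic relation to the modified polynomial is
\begin{align*}
J_{\mu}(X;q,t)=t^{n(\mu)}\tilde{H}_{\mu}\big[X(1-t^{-1});q,t^{-1}\big],
\end{align*}
taken with the appropriate conjugation convention; this is why $\dg(\lambda')$ appears and why $q$ and $t$ are swapped in (\ref{eqthm3.3}). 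We substitute (\ref{E:mmp21}) with $\eta$ in place of $\quinv$. The superization formula of Haglund--Haiman--Loehr, in the form refined by Ayyer--Mandelshtam--Martin, converts the plethysm $\tilde H[X(1-t^{-1})]$ into a signed sum over super fillings from the alphabet $\{1<\bar1<2<\bar2<\cdots\}$. This step is valid only if the statistic is compatible with superization, that is, if its defining triples and quadruples are read with the correct tie-breaking for barred letters. So the first step is to check that each $\eta\in\A^+$ satisfies the superization identity. Presumably this is exactly why the eight statistics of $\A^+$ are singled out (Definition \ref{Def:eta1}, Lemma \ref{L:setSi}): for them the row-equivalence class identity (\ref{eqthm1}) lifts to super fillings.

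Next, within each row-equivalence class, we run a sign-reversing involution on super fillings that toggles the bar of a suitably chosen cell. It should kill all fillings except those in which each row has distinct letters (the non-attacking condition), with the surviving cells entering only through whether $\tau(u)=\tau(\South(u))$. For the survivors, we group together fillings that agree after forgetting bars. For each cell $u$ not in row $1$ with $\tau(u)\ne\tau(\South(u))$, summing over its bar choices and over the permutations allowed inside the row produces a geometric factor. The sum over the row-equivalence class in which only the entries' arrangement varies is computed by the same insertion procedure as in \cite{JL24,JL25}, now with $\eta$ and inserting one cell at a time. Each inserted cell $u$ contributes $(1-t)$ in the numerator together with the factor $1-q^{\leg(u)+1}t^{\overline{\arm}(u)+1}$. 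That factor cancels against the matching factor of $c_\lambda(q,t)$ to leave the factor in (\ref{E:cqt}), and the remaining factors of $c_\lambda$ cancel against cells with $\tau(u)=\tau(\South(u))$ and against row $1$.

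Choosing a canonical representative in each class, namely top-row increasing, accounts for the remaining symmetrization in row $1$. The exponent bookkeeping then turns $t^{n(\mu)}\cdot t^{-\eta}$ into $t^{\bar\eta}$. The correction $-\alpha(\sigma)$ in $\eta^\circ$ comes from quadruples with repeated entries, which are non-attacking-allowed but weighted differently after superization.

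The main obstacle I expect is the cancellation/product step: showing that, for fixed unbarred non-attacking content, the signed $t^{\maj}q^{\eta}$ sum factorizes cell by cell into the factors appearing in $c_\sigma(q,t)$. My first attempt would be induction on rows from top to bottom. I would use the row-by-row insertion bijection of \cite{JL24} to show that adding a row multiplies the generating function by the required product, and verify that the local change of $\eta$ under that insertion is governed by $\leg$ and $\overline{\arm}$ as needed.
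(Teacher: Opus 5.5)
There is a genuine gap, and it sits in the step you treat as routine. You propose to superize $\tilde H_\lambda=\sum_\tau x^\tau t^{\maj(\tau)}q^{\eta(\tau)}$ directly, on the assumption that $\eta\in\A^+$ is compatible with superization. However, you never define $\eta$ on super fillings, and this is exactly the obstruction the paper names as its reason for not superizing $\eta$. Superization in the sense of Haglund--Haiman--Loehr and Ayyer--Mandelshtam--Martin needs a reading order under which both the statistic and $\maj$ survive standardization, with ties among positive letters broken one way and among negative letters the other. The statistic $\eta$ mixes quadruples on equal-height columns with queue-inversion triples on unequal heights, and it uses tie conventions such as $v\ge w$, $u\ge z$ and the $z=w$ cases. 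No such reading order or super extension is supplied for it. The identity (\ref{eqthm1}) holds only on row-equivalence classes of \emph{positive} fillings, and it does not lift by itself to signed super fillings. The set $\A^+$ is singled out because its quadruple sets are the $\quinv$-quadruple sets, not because of superization. Even granting some superization, your cancellation step is mis-described. A bar-toggling involution of this type removes only fillings with equal absolute values in $\inv$- or $\quinv$-attacking boxes; for $\quinv$ it leads to the different formula (\ref{macdonald2}), with $\arm(\South(u))$ in the denominator. Moreover, ``distinct letters in each row'' is not the non-attacking condition. Nothing in your plan produces condition (III) of Definition \ref{Def:non-attacking} or the arm-length $\overline{\arm}(u)$ in $c_\sigma(q,t)$. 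Your factor bookkeeping is also inverted: it is the restricted boxes whose factors cancel against $b_\lambda$, while the $\overline{\arm}$ factors of unrestricted boxes remain in the denominator.

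The missing idea is an auxiliary super-filling statistic, $\quadcoinv$. The paper superizes with $\quinv$ via (\ref{E:supJ2}) and then proves Theorem \ref{thm11}. Separable $\quinv$-non-attacking super fillings are sent by $\psi=\xi_w^{-1}\rho_w$ to non-attacking ones, with $(\maj,\quinv)$ going to $(\maj,\quadcoinv)$. Inseparable ones cancel under a sign-reversing involution conjugated by $\Omega$. This is where condition (III) comes from, and it rests on Lemma \ref{lem3.2}: the flip $\xi_i$ preserves the non-attacking property. Compression then gives each restricted box the factor $1-q^{\leg(u)+1}t^{\widehat{\arm}(u)+1}$, and by (\ref{E:b1}) these cancel against $b_\lambda$, leaving the $\overline{\arm}$ denominators. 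The factor $\prod_i[m_i]_t!$ is absorbed by sorting top rows with $\xi_w$, which changes $\overline{\quadcoinv}$ by exactly $\ell(w)$ while preserving $\maj$ and non-attacking. So ``choosing a canonical representative'' needs this operator-level fact, not just a choice. Only at the end does a bijection inside $[\sigma]$, built from $\xi$ and $\delta$ (Theorems \ref{thm3} and \ref{thm6.1}), trade $\overline{\quadcoinv}$ for $\eta^\circ$. Likewise, $-\alpha(\sigma)$ is not a superization weight. On non-attacking positive fillings every quadruple with a repeated entry ($z=u$ or $w=v$) is a quadruple coinversion, so $\overline{\quadcoinv}$ counts only quadruples with distinct entries, which matches $\eta^\circ$.
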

By applying the compact formulas for modified Macdonald polynomials \cite[Theorem 16]{JL25}, we find a compressed version of (\ref{eqthm3.3}); see also Examples \ref{Example:mainT} and \ref{Example:main2} for a comparison.
\begin{theorem}\label{cor:1}
	For any statistic $\eta\in \A^+$ such that $(z>w>u>v)\in\S$, we have
	\begin{align}\label{E:main_comp}
		P_{\lambda}(X;q,t)=\sum_{\sigma}q^{\maj(\sigma)}t^{\eta^{\circ}(\sigma)}d_{\sigma}(t)c_{\sigma}(q,t)\,x^{\sigma}, 
	\end{align}
	which is summed over all sorted non-attacking tableaux of the Young diagram $\dg(\lambda')$ of $\lambda$, and $d_{\sigma}(t)$ defined in (\ref{E:dsigma}) is a product of $t$-multinomials. Consequently, when $X=(x_1,\ldots,x_n)$ and $\ell(\lambda)\le n$, we have
		\begin{align}\label{E:vsP}
			c_{\lambda\mu}(q,t)=[m_{\mu}]P_{\lambda}
			&=\sum_{\{\nu \}}q^{n(\lambda')}t^{n(\lambda)-\chi(\nu)-\sum_j\binom{\lambda_j'-\lambda_{j+1}'}{2}} \notag\\
			&\qquad \times \prod_{1\leq i< j\leq \lambda_1}
			\phi_{\nu_{i+1,j}| \nu_{i,j}}(q^{-(j-i)}t^{-(\lambda_i'-\lambda_j')},t^{-1},\lambda_j'-\lambda_{j+1}')
		\end{align}
		with the sum running over sequences $\nu_{i,j}=(\nu_{i,j}^{1} \le \cdots \le \nu_{i,j}^{n})$ of nonnegative integers, $1\le i\le j\le \lambda_1$, subject to the conditions
		(\ref{eq5})--(\ref{eq9}). 
\end{theorem}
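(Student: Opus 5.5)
The plan is to derive (\ref{E:main_comp}) from Theorem \ref{T:main1} by grouping its terms into classes, and then to read off (\ref{E:vsP}) by evaluating those class sums.

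\textbf{Step 1: grouping.} Define an equivalence on the non-attacking, top-row increasing tableaux $\sigma$ of $\dg(\lambda')$. Two tableaux are equivalent if they differ only by permuting entries within maximal runs of cells that satisfy two conditions: the cells lie in the same row, and they have the same column height and the same entries below them. Permuting within such a run keeps the tableau non-attacking. Each class then contains a unique \emph{sorted} representative.

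Along a class we need three invariances. First, $\maj$ is constant, since descents are compared with the South neighbour and those neighbours are unchanged. Second, $c_\sigma(q,t)$ is constant. The set of cells $u$ with $\sigma(u)\ne\sigma(\South(u))$ is permuted inside columns of equal height, so $\leg(u)$ is preserved. As for $\overline{\arm}(u)$, I would check from the definition in Section \ref{S:2} that the multiset of $\overline{\arm}$ values over such cells is preserved.

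Third, the main point: $t^{\eta^{\circ}}$ summed over a class equals $t^{\eta^{\circ}(\text{sorted})}\,d_\sigma(t)$. This is where the hypothesis $(z>w>u>v)\in\S$ enters. I would invoke the compact formula \cite[Theorem 16]{JL25} for $\tilde H_\lambda$. Its proof shows that, for such $\eta$, permuting a run of equal-height columns with identical lower parts changes $\eta$ by exactly the inversion count of the permutation of that run. Under the superization dictionary of \cite{HHL04,AMM23} used for Theorem \ref{T:main1}, $\eta\mapsto\eta^{\circ}=\bar\eta-\alpha$. Two facts then need checking. The quadruples with repetitions counted by $\alpha$ are unaffected by these swaps, because entries within a run are distinct. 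The complement reverses the inversions into coinversions. Summing $t^{\mathrm{inv}}$ over the permutations of a multiset gives a $t$-multinomial, so $d_\sigma(t)$ is a product of $t$-multinomials, as in (\ref{E:dsigma}). I expect this compatibility of the JL25 compression with the superization (with the bar and with $\alpha$) to be the main obstacle. My first attempt would be to reduce it to the adjacent transposition case, where only quadruples involving the two swapped columns change.

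\textbf{Step 2: the coefficient formula.} Restrict to $X=(x_1,\dots,x_n)$ and extract $[x^\mu]$. A sorted non-attacking tableau of $\dg(\lambda')$ is encoded by counts: $\nu_{i,j}^{a}$ records how many entries $\le a$ occur in row $i$ among the columns of height $j$, or equivalently some cumulative version. Conditions (\ref{eq5})--(\ref{eq9}) encode three things: content $\mu$, the non-attacking constraints, and the nesting between consecutive rows $i,i+1$.

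In these variables, $\maj$ together with the $\leg$ and $\overline{\arm}$ parameters of $c_\sigma$ depend only on $j-i$ and on $\lambda'_i-\lambda'_j$. This explains the arguments $q^{-(j-i)}t^{-(\lambda_i'-\lambda_j')}$. The $t$-multinomials $d_\sigma$, together with the sum over compatible fillings of row $i$ given row $i+1$, factor over pairs $i<j$ into the functions $\phi_{\nu_{i+1,j}\mid\nu_{i,j}}(\cdot,t^{-1},\lambda'_j-\lambda'_{j+1})$. The normalisation $q^{n(\lambda')}t^{n(\lambda)-\chi(\nu)-\sum_j\binom{\lambda'_j-\lambda'_{j+1}}{2}}$ then comes out of two sources. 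One is converting $\bar\eta$ back via (\ref{E:bar_eta}). The other is taking the maximal $\maj$ against inverted parameters, with $\binom{\lambda'_j-\lambda'_{j+1}}{2}$ being the top degree of each multinomial.

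This final step is bookkeeping once the encoding is fixed. I would verify it on Example \ref{Example:p}.
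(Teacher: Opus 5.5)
\textbf{There is a genuine gap, in the definition of the classes in Step 1.} Your architecture matches the paper's: start from Theorem \ref{T:main1}, collapse classes onto sorted representatives using \cite{JL25}, check that $\maj$, $c_\sigma$ and $\alpha$ are constant on classes, and defer the coefficient formula to \cite[Section 8]{JL25}. But the choice of classes in Step 1 carries all the content, and it is wrong.

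In a non-attacking filling all entries of a row are distinct (condition $\mathrm{(I)}$ of Definition \ref{Def:non-attacking}). So for $i\ge 2$ no two cells of row $i$ have the same entry below them, and no two equal-height columns have identical lower parts. Your relation is therefore trivial except possibly in row $1$. Even there it does not preserve $\maj$, because permuting the cells of a row changes the descents of the cells above them. For $\lambda=(2,2)$, take top row $2\,4$ with bottom row $1\,3$, resp.\ $3\,1$. Both are non-attacking and related by your relation, yet their $\maj$ values are $2$ and $1$.

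Nor do your classes contain sorted tableaux. For $\lambda=(3,3)$, the tableau with rows $5\,6$, $4\,3$, $1\,2$ (top to bottom) is non-attacking and top-row increasing. Its only partner under your relation has rows $5\,6$, $4\,3$, $2\,1$, and neither is sorted, since the descent columns $(5,4)$, $(6,3)$ are out of order.

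The classes behind Definition \ref{Def:sorted} are built level by level. At level $r$ you rearrange the lower column segments (rows $1$ through $r$, which is exactly what $\delta_i^r$ swaps on a non-attacking filling) beneath the descent columns of $\sigma|_r^{r+1}$, keeping them descents and keeping the top row fixed. The example above lies in the four-element class of the sorted tableau with rows $5\,6$, $3\,4$, $1\,2$.

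Correspondingly, $d_\sigma(t)$ is not $\sum t^{\inv}$ over permutations of a multiset. Because rows have distinct entries, each factor in (\ref{E:dsigma}) is either $1$ or a $t$-integer $[\#\{(a',b')\ \text{descent}: b'\le b<a'\}]_t$, attached to a lower entry $b$ of a descent column. Thus $d_\sigma(1)$ is a Ferrers-board count of descent-preserving matchings. The fact that each admissible swap shifts the statistic by exactly one is the case analysis of \cite[Proposition 14]{JL25}, and it depends on $\S$.

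\textbf{How the paper does this step.} It does not reprove the compression but imports it.
\begin{enumerate}
\item The hypothesis $(z>w>u>v)\in\S$ gives $(z>w>v>u)\in\overline{\S}$. Condition (2) of Definition \ref{Def:setS} then gives $\eta_{\overline{\S}}=\overline{\eta}$ on rectangles. So \cite[Equation (7.4)]{JL25} applies to $\overline{\eta}$, stated in terms of $\rev(\sigma)$, and is transported back because $(\maj,\eta)$ and $d_\sigma$ agree on $\sigma$ and $\rev(\sigma)$ for rectangles.
\item It checks that $\alpha$ is constant on classes, via condition (1) of Definition \ref{Def:setS} and $\rev$. Your reason (entries within a run are distinct) does not apply: the repeated entries seen by $\alpha$ come from restricted boxes, which travel with the moved lower segments.
\item It checks that $c_\sigma$ is constant, since restricted-box counts are $\delta_i^r$-invariant, together with Remark \ref{R:1}.
\item It extends to non-rectangular shapes because $\rev$ and $\delta_i^r$ preserve the triples between columns of unequal height.
\end{enumerate}

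No superization enters at this stage. Given Theorem \ref{T:main1}, (\ref{E:compact1}) is a purely combinatorial identity on non-attacking top-row increasing tableaux, so the obstacle you anticipate is not where the difficulty lies. Your Step 2 is at the same level of detail as the paper, which likewise defers to \cite[Section 8]{JL25}.
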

Our proof of Theorem \ref{T:main1} is not a direct application of superization to the statistics $\eta\in\A$, primarily for the following reasons. Unlike the inversion $(\inv)$ and queue inversion ($\quinv$) statistics in \eqref{eqthm1}, each statistic $\eta$ counts a mixture of quadruples and triples under specific conditions. Consequently, it is difficult to extend these statistics to super fillings or to find a suitable reading order for fillings that simultaneously tracks the $\eta$ and $\maj$ statistics after standardization.

To overcome this difficulty, we introduce a new statistic on super fillings called {\em quadruple coinversion} ($\quadcoinv$), whose complement $\overline{\quadcoinv}$ reduces to the $\coinv''$ statistic on non-attacking fillings defined by Corteel, Mandelshtam and Williams \cite[Remark 5.17]{CMW22}. An advantage of the statistic $\quadcoinv$ is that the flip operator associated with it preserves the non-attacking property, which enables us to apply the superization-compression approach.

Our second main result (Theorem \ref{thmE}) is a compact formula for non-symmetric Macdonald polynomials $E_{\gamma}(X;q,t)$ indexed by a weak composition $\gamma$, extending the multiline-queue formula for $E_{\lambda}(X;q,t)$ for a partition $\lambda$ (see Corollary \ref{cor:10.1}), which is due to Corteel, Mandelshtam and Williams \cite[Proposition 1.10 and Theorem 5.9]{CMW22}. 
\begin{theorem}\label{thmE}
	For $X=(x_1,\ldots,x_n)$ and a weak composition $\gamma\in \mathbb{N}^n$, let $\vartheta\in \{\overline{\quadinv},\eta^{\circ}\}$ for each $\eta\in \A^+$. Then, 
	\begin{align}\label{E:quadinv}
		E_{\gamma}(X;q,t)&=\prod_{u\in\dg'(\gamma)}(1-q^{\leg(u)+1} t^{\widetilde{\arm}(u)+1})^{-1}
		\sum_{\sigma:\gamma\rightarrow [n] \atop \widehat{\sigma}: \,\text{non-attacking}}x^{\sigma} q^{\maj(\widehat{\sigma})}  t^{\vartheta(\widehat{\sigma})} \notag\\
		&\qquad \times\prod_{u\in\dg'(\gamma)\atop \widehat{\sigma}(u)\ne\widehat{\sigma}(\South(u))}  (1-t)
		\prod_{u\in\dg'(\gamma)\atop \widehat{\sigma}(u)=\widehat{\sigma}(\South(u))} (1-q^{\leg(u)+1} t^{\arm''(u)+1}).
	\end{align}
\end{theorem}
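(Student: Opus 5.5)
The plan is to start from the Haglund--Haiman--Loehr formula for $E_{\gamma}(X;q,t)$ \cite[Theorem 3.5.1]{HHL08}. It expresses $E_\gamma$ as a sum over non-attacking fillings $\widehat{\sigma}$ of the augmented diagram $\widehat{\dg}(\gamma)$, whose basement row is fixed by the reverse column indices. Each term carries the weight $x^{\sigma}q^{\maj(\widehat\sigma)}t^{\coinv(\widehat\sigma)}$ and the factors $\frac{1-t}{1-q^{\leg(u)+1}t^{\arm(u)+1}}$ over cells with $\widehat\sigma(u)\ne\widehat\sigma(\South(u))$. Clearing denominators by the global prefactor $\prod_{u}(1-q^{\leg(u)+1}t^{\widetilde{\arm}(u)+1})^{-1}$ produces the factors $(1-q^{\leg+1}t^{\arm''+1})$ on the cells with equal south neighbours. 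The only discrepancy with (\ref{E:quadinv}) is then that the arm statistics ($\arm$, $\widetilde{\arm}$, $\arm''$) differ and the $t$-statistic is $\coinv$ rather than $\vartheta$. So the claim reduces to a weight-preserving identity on non-attacking fillings with the basement.

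Concretely, I would proceed as follows.

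\textbf{Step 1.} Rewrite the HHL expansion as a sum over super fillings via superization \cite{HHL08,AMM23}. The product $\prod(1-t)$ and the denominators arise from alternating sums over signed letters, which lets the statistic $\coinv$ be traded for any statistic $\vartheta$ on super fillings that satisfies two conditions: the flip operator associated with $\vartheta$ preserves the non-attacking property and the basement, and $\vartheta$ is equidistributed with $\coinv$ jointly with $\maj$ on each row-equivalence class.

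\textbf{Step 2.} Take $\vartheta=\overline{\quadinv}$. I would check that on standardizations of super fillings it satisfies the analogue of (\ref{eqthm1}), namely
\[
\sum_{\tau\in[\sigma]}t^{\maj(\tau)}q^{\overline{\quadinv}(\tau)}=\sum_{\tau\in[\sigma]}t^{\maj(\tau)}q^{\inv(\tau)} .
\]
Here the basement is treated as an extra bottom row of distinct large letters. This treatment is what allows a weak composition $\gamma$ rather than only a partition. Running the superization--compression argument row by row then yields (\ref{E:quadinv}) with $\vartheta=\overline{\quadinv}$. The shift by $n(\lambda)$ in the complement, together with exchanging $q$ and $t$ as in the passage from $\tilde H$ to $P$, accounts for the $q^{\maj}t^{\vartheta}$ form.

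\textbf{Step 3.} For $\vartheta=\eta^{\circ}$ with $\eta\in\A^+$, I would show that $\eta^\circ$ and $\overline{\quadinv}$ agree in distribution with $\maj$ on the relevant classes. I would use Theorem \ref{T:eta} on the standardized fillings, and then account for the correction $\alpha(\sigma)$ coming from quadruples with repeated entries. The identity $\bar\eta-\alpha=\overline{\quadinv}$ should hold class-wise, by the same reasoning used for Theorem \ref{T:main1}. Since the non-symmetric formula is proved by the same mechanism as Theorem \ref{T:main1}, only with the basement adjoined, I expect this step to carry over.

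The main obstacle is Step 2 with a general basement. One has to check that the flip operators on rows adjacent to the basement remain non-attacking, and that the arm lengths $\widetilde{\arm}$ and $\arm''$ transform correctly when columns of $\gamma$ are unsorted. My first attempt would be to reduce to the partition case via permuted-basement polynomials \cite{Alex:16,Fer:11}. Failing that, I would verify the local quadruple/triple flip directly, column-pair by column-pair, comparing the lengths of the two columns.
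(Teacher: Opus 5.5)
Your skeleton (superize the Haglund--Haiman--Loehr formula, change the statistic, compress) matches the paper's, but the middle step, which carries all the content, is missing. The mechanism you propose cannot supply it.

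Formula \eqref{E:HHL08} sums over $\quinv$-non-attacking fillings, whereas \eqref{E:quadinv} sums over non-attacking ones, i.e.\ additionally with no type $\mathrm{(III')}$ pair. Calling both ``non-attacking'' hides exactly this point. Expanding \eqref{E:HHL08} box by box gives the paper's \eqref{E:2}, a signed sum over super fillings $\sigma$ with $|\sigma|$ $\quinv$-non-attacking. There is no $\tilde H$-type formula behind $E_\gamma$, so no $q\leftrightarrow t$ exchange or $n(\lambda)$-shift enters.

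Neither this set nor the set of non-attacking super fillings is closed under row-equivalence, and they differ by the fillings with a type $\mathrm{(III')}$ attacking pair. So no joint equidistribution with $\maj$ on row-equivalence classes can convert one sum into the other; those extra fillings must be \emph{cancelled}. The paper does this as in Theorem~\ref{thm11}:
\begin{enumerate}
\item Separable $\quinv$-non-attacking super fillings are sent row-equivalently and $(\maj,\text{statistic})$-preservingly onto non-attacking ones. One moves the offending top-row entries forward with $\rho$-flips, then undoes the permutation with quadruple flips. These stay non-attacking precisely because of the sign-dependent clauses (1)--(2) in the definition of $\quadinv$ (cf.\ Lemma~\ref{lem3.2}).
\item Inseparable fillings cancel under the sign-reversing involution of Theorem~\ref{thm5.2}.
\end{enumerate}
Only then does one compress. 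That is where a negative entry in a restricted box shifts the statistic by $\arm''(u)$. Clearing denominators in \eqref{E:HHL08} produces $\widetilde{\arm}$, not $\arm''$, contrary to your opening claim.

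Moreover, the identity you propose to check in Step~2 is false. Take the diagram with columns of heights $2$ and $1$, bottom row $\{a,b\}$ and top entry $x$, with $a<x<b$.
\begin{itemize}
\item With $\inv$: the filling with bottom row $(a,b)$ has $(\maj,\inv)=(1,0)$, and the one with $(b,a)$ has $(0,1)$, giving $t+q$.
\item With $\overline{\quadinv}$: the same two fillings have $(\maj,\overline{\quadinv})=(1,1)$ and $(0,0)$, giving $1+tq$.
\end{itemize}
On columns of unequal height $\quadinv$ counts inversion triples, so complementing reverses them.

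Step~3 has the same problem in milder form. By analogy with Theorem~\ref{thm6.1}, you need a bijection on non-attacking fillings with the basement that preserves $\maj$, preserves the number of restricted boxes in each row of each rectangle (so the $\arm''$-product is unchanged), and carries $\overline{\quadinv}$ to $\eta^{\circ}$. Theorem~\ref{T:eta}, which lives on full row-equivalence classes of positive fillings, does not give this.

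Finally, no reduction to partitions via permuted basements is needed. The paper treats general $\gamma$ directly by splitting $\widehat{\sigma}$ into maximal rectangles and adding the type $A$ and $B$ triples between different rectangles.
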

The proof of Theorem \ref{thmE} starts from the combinatorial formula for $E_{\gamma}(X;q,t)$ by Haglund, Haiman and Loehr \cite[Theorem 3.5.1]{HHL08}, and the compression of this formula is achieved by making use of the second new statistic {\em quadruple inversions} ($\quadinv$) and its relation to $\eta$. 

Moreover, using superization and \eqref{eqthm1} of Theorem \ref{T:eta},
we provide alternative proofs for two additional combinatorial formulas of $P_{\lambda}(X;q,t)$ that  appear in \cite{CHO22,O:24}.
\begin{theorem}\cite[Equation (4.10)]{CHO22}\cite[Theorem 1.1]{O:24}\label{thm:twoP}
	For a partition $\lambda$, 
	\begin{align}
		P_{\lambda}(X;q,t)
		&=\sum_{\substack{\sigma\, \quinv\textrm{-non-attacking}\\ \textrm{ and bottom-row increasing}}}
		q^{\maj(\sigma)}t^{\coinv^*(\sigma)}x^{\sigma} \notag\\ 
		&\qquad\qquad \times \prod_{\sigma(u)\ne\sigma(\South(u)) \atop \textrm{ and }u \,\not\in\, \textrm{row }1}  \frac{1-t}{1-q^{\leg(u)+1} t^{\widetilde{\arm}(u)+1}}.\label{macdonald3}\\
		&=\sum_{\substack{\sigma\textrm{ $\quinv$-non-attacking}\\ \textrm{ and top-row increasing}}}
		q^{\maj(\sigma)}t^{\overline{\mathsf{quinv}}(\sigma)}x^{\sigma}\notag \\
		&\qquad\qquad\times \prod_{\sigma(u)\ne\sigma(\South(u)) \atop \textrm{ and }u\, \not\in \,\textrm{row }1}  \frac{1-t}{1-q^{\leg(u)+1} t^{\arm(\South(u))+1}}.\label{macdonald2}
	\end{align}
\end{theorem}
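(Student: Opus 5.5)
The plan is to follow the superization route of Haglund--Haiman--Loehr for $P_\lambda$, with $\inv$ replaced by $\quinv$ via \eqref{eqthm1}. I start from the integral-form relation
\[
J_\lambda(X;q,t)=t^{n(\lambda)}\tilde H_\lambda[X(t^{-1}-1);q,t^{-1}]
\]
and $J_\lambda=\prod_{u}(1-q^{\arm(u)}t^{\leg(u)+1})\,P_\lambda$. Because $\tilde H_\lambda(X;q,t)=\tilde H_{\lambda'}(X;t,q)$, I write $\tilde H$ on the conjugate diagram $\dg(\lambda')$. There $\maj$ carries the power of $q$ and $\quinv$ carries the power of $t$, which explains why \eqref{macdonald3}--\eqref{macdonald2} are sums over fillings of $\dg(\lambda')$ weighted by $q^{\maj}$.

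The first step is superization. The plethystic substitution $X\mapsto X(t^{-1}-1)$ is expanded as a sum over super fillings with positive and negative letters, using the superization formula of Ayyer--Mandelshtam--Martin for $\quinv$. This is legitimate because, by Theorem~\ref{T:eta}, the $\maj$ and $\quinv$ generating functions agree on row-equivalence classes, so the superization lemma of Haglund--Haiman--Loehr applies verbatim to $\quinv$ with a suitable reading order. The result is that $J_\lambda$ becomes a signed, weighted sum over super fillings $\tilde\sigma$ with weight $x^{|\tilde\sigma|}q^{\maj(\tilde\sigma)}t^{\overline{\quinv}(\tilde\sigma)}(-1)^{\#\text{neg}}\cdots$.

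The second step is compression. Fix the absolute-value filling $\sigma=|\tilde\sigma|$ and sum over all sign choices. I will use a sign-reversing involution that flips the sign of the entry in some cell $u$ with $\sigma(u)=\sigma(\South(u))$ or with a $\quinv$-attacking pair. This cancels every $\sigma$ that is not $\quinv$-non-attacking, and on the survivors it produces cell-by-cell factors. Cells in row~1 and cells with $\sigma(u)=\sigma(\South(u))$ give factors $(1-q^{\leg+1}t^{\cdots+1})$, and these cancel against the hook product in $J_\lambda$. Cells with $\sigma(u)\neq\sigma(\South(u))$ leave a factor $(1-t)$ over the remaining hook factor. Matching the hook factors cell by cell gives exactly the denominators $1-q^{\leg(u)+1}t^{\widetilde{\arm}(u)+1}$ or $1-q^{\leg(u)+1}t^{\arm(\South(u))+1}$.

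The third step handles the two versions. Row~1 entries are all distinct under non-attacking, so I symmetrize over their order. Choosing the representative with the top row increasing gives \eqref{macdonald2} directly with $t^{\overline{\quinv}}$. Choosing the bottom row increasing instead requires a bijection, for instance a column-preserving rearrangement or reversal $\rev$, that sends $\overline{\quinv}$ to $\coinv^*$ and converts $\arm(\South(u))$ to $\widetilde{\arm}(u)$ while preserving $\maj$; this gives \eqref{macdonald3}.

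I expect the main obstacle to be the compression step. I must check that the involution preserves both $\maj$ and $\quinv$-non-attacking, and that the leftover $t$-powers combine into exactly $\overline{\quinv}$ with the stated arm conventions. I would try first to choose the flipped cell as the last cell in the $\quinv$ reading order, as HHL do for $\inv$, and verify the triple/quadruple cases directly.
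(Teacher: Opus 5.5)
Steps 1 and 2 are essentially right, though the involution is mis-described. \eqref{E:supJ2} with $\vartheta=\quinv$ already restricts to $\quinv$-non-attacking $|\sigma|$. Flipping the sign in a restricted box is not a cancelling move, since it shifts $\maj$ by $\leg(u)+1$; it is the compression step. Summing over signs gives $t^{-1}(1-q^{\leg(u)+1}t^{\arm(\South(u))+1})$ for each restricted box and $t^{-1}(1-t)$ otherwise.

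\emph{First gap: the factor $\prod_i[m_i]_t!$.} You overlook that, by \eqref{eqJ.2}, $b_\lambda$ also contains $\prod_i[m_i]_t!$, coming from the top boxes of the columns, and no box cancels it. After compression you therefore have $\prod_i([m_i]_t!)^{-1}$ times a sum over \emph{all} $\quinv$-non-attacking positive fillings. Your Step 3 must then absorb $\prod_i[m_i]_t!$ by symmetrizing the top row of each rectangle, and that cannot be done by matching terms on $\quinv$-non-attacking positive fillings. Take $\lambda=(2,2)$, write fillings of the $2\times 2$ square as (top row)/(bottom row), and consider the class with rows $\{1,2\}$ over $\{2,3\}$:
\begin{itemize}
\item $(1,2)/(2,3)$, $(1,2)/(3,2)$ and $(2,1)/(2,3)$ are $\quinv$-non-attacking, with weights $w_1=\frac{(1-t)^2}{(1-qt)(1-qt^2)}$, $w_2=\frac{t(1-t)}{1-qt^2}$ and $w_3=\frac{t(1-t)}{1-qt}$;
\item $(2,1)/(3,2)$ is $\quinv$-attacking.
\end{itemize}
So there are two top-row-increasing representatives but only three fillings, hence no bijection with (representative, permutation) pairs. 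The required identity $t(w_1+w_2)=w_3$ is the rational identity $(1-t)+t(1-qt)=1-qt^2$, not a matching of terms. Concretely, $\rho_1$ stops at once on $(1,2)/(3,2)$ and produces the attacking filling $(2,1)/(3,2)$. Moreover, the factor involving $\arm(\South(u))$ depends on where the unrestricted boxes sit.

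The paper avoids this by reversing the order of operations:
\begin{enumerate}
\item After superizing, a sign-reversing involution restricts to super fillings whose top rows have distinct absolute values in each rectangle.
\item These are sorted by $\rho_w$, which needs no non-attacking hypothesis, preserves $\maj$, and produces $\sum_w t^{\inv(w)}=\prod_i[m_i]_t!$.
\item Only then are the non-attacking restriction and the compression performed.
\end{enumerate}
These are the $\quinv$-versions of Lemmas~\ref{lem1}--\ref{prop2}.

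\emph{Second gap: obtaining \eqref{macdonald3} by $\rev$.} Reversal cannot do what you ask, for two reasons. First, it sends top-row-increasing fillings to top-row-decreasing ones, not to bottom-row-increasing ones. Second, it does not turn $\overline{\quinv}$ into $\coinv^*$. On equal-height columns, $\coinv^*$ counts type $A$ triples, whose third entry $c$ lies in the row of $a$ (inversion type). In a queue inversion triple, $c$ lies in the row of $b$, and $\rev$ only exchanges left and right. For example, $\sigma=(1,2)/(3,4)$ has $\overline{\quinv}(\sigma)=0$ but $\coinv^*(\rev(\sigma))=2$.

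The missing ingredient is a $\maj$-preserving exchange of $\inv$ and $\quinv$ on rectangles, namely the bijection $\theta$ in the proof of Theorem~\ref{thm1a}. The paper packages it as the mixed statistic $\minv$ and proceeds as follows:
\begin{enumerate}
\item Superize with $\minv$, as in \eqref{E:supJ1}.
\item Restrict to distinct \emph{bottom} rows and sort them with the inversion flip operators $\zeta_w$ (Lemmas~\ref{lem1} and \ref{lem2}).
\item Compress (Lemma~\ref{prop2}) to obtain \eqref{eqP.2}.
\item Only then apply $\rev$. It maps $\minv$-non-attacking, bottom-row-decreasing fillings to $\quinv$-non-attacking, bottom-row-increasing ones, with $\overline{\minv}(\sigma)=\coinv^*(\rev(\sigma))$.
\end{enumerate}
Your observation that $\rev$ converts $\arm(\South(u))$ into $\widetilde{\arm}$ is correct, and it is exactly that last step.
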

Equation (\ref{macdonald3}) is the tableau formula for $P_{\lambda}(X;q,t)$ mentioned in the combinatorial model (3). In parallel Mandelshtam \cite{O:24} introduced probabilistic operators that act on queue inversion non-attacking fillings to establish (\ref{macdonald2}). In this paper, we provide unified proofs of (\ref{macdonald3}) and (\ref{macdonald2}) as applications of the second equality in (\ref{eqthm1}).

When all parts of partition $\lambda$ are distinct, the combinatorial formulas of symmetric Macdonald polynomials discussed above -- namely, 
\eqref{eqthm3.3}, \eqref{eqthm2.1}, \eqref{eqthm2.2}, \eqref{macdonald3} and \eqref{macdonald2} reduce to the formula given by Lenart \cite{Lenart}. For partitions $\lambda$ with repeated parts, \eqref{E:main_comp} contains the fewest terms, as the sorted non-attacking condition is more restrictive than the $\quinv$-non-attacking condition.

The remainder of this paper is organized as follows: Section \ref{S:2} provides the necessary preliminaries on combinatorial statistics, super fillings and superization. In Section \ref{S:3}, we define various flip operators that act on pairs of adjacent columns of equal height. Section \ref{S:reduced} introduces reduced expressions of a permutation and connects them to the action of flip operators. The proofs of Theorems \ref{T:main1} and \ref{cor:1} are presented in Sections \ref{S:roadmap} -- \ref{S:7}. Finally, we present an inversion analogue of Theorem \ref{T:main1} and prove Theorems \ref{thmE} and \ref{thm:twoP} in Sections \ref{S:remark} -- \ref{S:8}, respectively. Finally, we make concluding remarks in Section \ref{S:finalremark}.


\section{Preliminaries on combinatorial statistics}\label{S:2}

A partition $\lambda=(\lambda_1\ge \cdots\ge \lambda_n>0)$ of a positive integer $N$ is a weakly decreasing sequence of positive integers satisfying $\lambda_i\ge \lambda_{i+1}$ for all $1\le i<n$ and $|\lambda|=\lambda_1+\cdots+\lambda_n=N$. Each integer $\lambda_i$ is called the $i$th part of $\lambda$. The number of parts is the length of $\lambda$, denoted by $\ell(\lambda)$. 

The Young diagram of $\lambda$, denoted by $\dg(\lambda)$, is a left-justified array of boxes where the $i$th row contains $\lambda_i$ boxes. We adopt the Cartesian coordinate system where a box has coordinates $(i,j)$ if it lies in the $i$th row from the bottom and the $j$th column from the left. 
The conjugate (or transpose) of $\lambda$, denoted by $\lambda'$, is the partition whose Young diagram is obtained by reflecting $\dg(\lambda)$ across the main diagonal (the boxes with coordinates $(i,i)$); see Figure \ref{F:f2}. We simply write $\dg'(\lambda)=\dg(\lambda')$.
\begin{figure}[ht]
	\centering
	\includegraphics[scale=0.9]{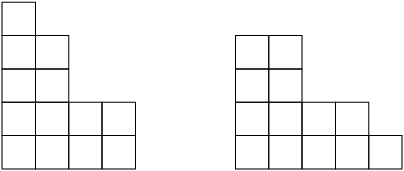}
	\caption{\label{F:f2} The Young diagram $\dg(\lambda)$ of $\lambda=(4,4,2,2,1)$ and its conjugate Young diagram $\dg'(\lambda)$ from left to right.}
\end{figure}


Define the alphabet $\mathcal{A}=\mathbb{Z}_{+}\cup \mathbb{Z}_{-}=\{1,\overline{1},2,\overline{2},\ldots \}$ of positive letters $i$ and negative letters $\bar{i}$. A {\em super filling} of $\dg'(\lambda)$ is a function $\sigma:\dg'(\lambda)\rightarrow \mathcal{A}$, and a {\em positive filling} of $\dg'(\lambda)$ is a function $\sigma:\dg'(\lambda)\rightarrow \mathbb{Z}_{+}$. In both cases, $\sigma$ may be referred to as a filling or a tableau. Let
$p(\sigma)$ and $m(\sigma)$ denote the numbers of positive and negative entries in $\sigma$, respectively. Define $|\sigma|$ to be the filling obtained by taking the absolute values of the entries in $\sigma$, and define
\begin{align*}
	x^{|\sigma|}=\prod_{z\in \dg'(\lambda)}x_{|\sigma(z)|}
\end{align*}
as the monomial corresponding to the content of $|\sigma|$. Fix the total ordering $<$ of $\mathcal{A}\cup \{ 0,\infty\}$ to be
\begin{align*}
(\A\cup \{ 0,\infty\},<)=\{0<1< \overline{1}<2<\overline{2}<\cdots<\infty\}. 
\end{align*}
For $a,b\in \mathcal{A}\cup \{ 0,\infty\}$, we write
\begin{align*}
	I(a,b)=\begin{cases}
		1,  & \text{ if\ } a>b  \text{ or\ } a=b\in\mathbb{Z_{-}},\\
		0,  & \text{ if\ } a<b \text{ or\ } a=b\in\mathbb{Z_{+}} .
	\end{cases}
\end{align*}
For each box $u$ of $\dg'(\lambda)$, let $\South(u)$ denote the box directly below it. For any pair of entries 
$(\sigma(z),\sigma(\South(z)))$ in $\sigma$, it is a {\em descent} if $I(\sigma(z),\sigma(\South(z)))=1$; and {\em non-descent} otherwise. We define 
\begin{align*}
	\Des(\sigma)=\{z\in \dg'(\lambda): I(\sigma(z),\sigma(\South(z)))=1\}
\end{align*}
to be the {\em descent set} of $\sigma$, and $\des(\sigma)=|\Des(\sigma)|$. A non-descent pair $(\sigma(z),\sigma(\South(z)))$ is an {\em ascent} if $\sigma(z)<\sigma(\South(z))$.
Let $\leg(u)$ be the number of boxes strictly above box $u$ in the diagram $\dg'(\lambda)$. Then, we call
\begin{align*}
	\maj(\sigma)=\sum_{u\in\Des(\sigma)}({\mathsf{leg}}(u)+1)
\end{align*}
{\em the major index} of $\sigma$. Each Young diagram $\dg'(\lambda)$ is regarded as a concatenation of
maximal rectangles in a way that the heights of rectangles are strictly decreasing from left to right; see for instance \cite{CHO22,JL24,JL25}. For a super filling $\sigma$, let $\sigma_j$ be the filling of the rectangle of $\dg'(\lambda)$ with height $j$. We write
\begin{align}\label{E:sigmadec}
	\sigma=\sigma_{\lambda_1} \sqcup \cdots \sqcup \sigma_1,
\end{align}
where some $\sigma_j$ may be empty for $1\le j\le \lambda_1$. For example,  in Figure \ref{F:f2}, every filling $\sigma$ of the Young diagram $\dg'(\lambda)$ has the decomposition $\sigma=\sigma_{4}\sqcup \sigma_2\sqcup \sigma_1$.

\begin{definition}[Arm-lengths]\label{Def:arm-lengths}
	Given a super filling $\sigma$,
	a box $u$ in $\dg'(\lambda)$ is called {restricted} if $|\sigma(u)|=|\sigma(\South(u))|$, and {unrestricted} otherwise. 
	For a box $u=(i,j)\in \dg'(\lambda)$, define
	\begin{align*}
		\arm(u)&=|\{(i,k)\in \dg'(\lambda): k>j \}|,\\
		\widehat{\arm}(u)&=|\{(i-1,k)\in \dg'(\lambda): k>j \}| \\
		&\qquad+ |\{(i,k)\in\dg'(\lambda): k<j,\,\lambda_{k}=\lambda_j \textrm{ and } (i,k)\  \textrm{is unrestricted} \}|,\\
		\overline{\arm}(u)&=|\{(i-1,k)\in \dg'(\lambda): k>j\, \mbox{ and }\,\lambda_{k}<\lambda_j \}| \\
		&\qquad+ |\{(i,k)\in\dg'(\lambda): k<j,\, \lambda_{k}=\lambda_j \textrm{ and } (i,k)\  \textrm{is unrestricted} \}|,\\
	\widetilde{\arm}(u)&=|\{(i-1,k)\in \dg'(\lambda): k>j\, \mbox{ and }\,\lambda_{k}<\lambda_j \}|\\
	&\qquad + |\{(i,k)\in\dg'(\lambda): k<j\, \mbox{ and }\,\lambda_{k}\le \lambda_j  \}|.
\end{align*}
\end{definition}
\begin{remark}\label{R:1}
	Consider the decomposition $\sigma=\sigma_{\lambda_1} \sqcup \cdots \sqcup \sigma_1$. Suppose that in the filling $\sigma_j$, row $i+1$ contains $m$ unrestricted boxes. Then, from left to right, the values $\overline{\arm}$ for these boxes are $s, s+1,\ldots,s+m-1$ where $s=\lambda_i'-\lambda_j'$. Consequently, the set $\{\overline{\arm}(u):\sigma(u)\ne \sigma(\South(u))\}$ is independent of the specific positions of unrestricted boxes.
\end{remark}
With the definitions of arm-length and leg-length, we are now ready to define the integral Macdonald polynomials $J_{\lambda}(X;q,t)$ and modified Macdonald polynomials $\tilde{H}_{\lambda}(X;q,t)$.
\begin{definition}[Integral and modified Macdonald polynomials]
For a partition $\lambda$, let 
\begin{align}\label{E:bqt}
	b_{\lambda}(q,t)&=\prod_{s\in \dg(\lambda)}(1-q^{\arm(s)}t^{\leg(s)+1}).
\end{align}
Then the integral and modified Macdonald polynomials are
\begin{align}\label{E:JP}
	J_{\lambda}(X;q,t)&=b_{\lambda}(q,t)P_{\lambda}(X;q,t),\\
	\tilde{H}_{\lambda}(X;q,t)&=t^{n(\lambda)}J_{\lambda}\left[\frac{X}{1-t^{-1}};q,t^{-1}\right],\label{E:HtoJ}
\end{align}
respectively, where $f[X]$ is the plethystic substitution of $X$ into the symmetric function $f$.
\end{definition}
\begin{remark}
Macdonald conjectured that $[m_{\mu}]J_{\lambda}\in \mathbb{Z}[q,t]$ in 1988, which was affirmed about a decade later by Garsia--Remmel \cite{GR98}, Garsia--Tesler \cite{GT96}, Kirillov--Noumi \cite{KN98}, Knop \cite{Knop:97} and Sahi \cite{Sahi96}. 
\end{remark}
\begin{example} For $\lambda=(2,2)$,
	\begin{align*}
		P_{22}(X;q,t)&=m_{22}(X)+\frac{(1+q)(1-t)}{1-qt}m_{211}(X)\\
		&\qquad+\frac{(2+t+3q+q^2+3qt+2q^2t)(1-t)^2}{(1-qt)(1-qt^2)}
		m_{1111}(X),\\
		J_{22}(X;q,t)&=(1-qt)(1-t)(1-qt^2)(1-t^2)P_{22}(X;q,t)\\
		&=t^2 s_4[X(1-t)]+(qt^2+t+qt)s_{31}[X(1-t)]+(1+q^2t^2)s_{22}[X(1-t)]\\
		&\quad\quad+(q+q^2t+qt)s_{211}[X(1-t)]+q^2s_{1111}[X(1-t)],
	\end{align*}
    where $s_{\lambda}(X)$ is the Schur function. By (\ref{E:HtoJ}), the second equation yields 
	\begin{align*}
		\tilde{H}_{22}(X;q,t)&=s_4(X)+(q+t+qt)s_{31}(X)+(t^2+q^2)s_{22}(X)\\
		&\qquad+(qt^2+q^2t+qt)s_{211}(X)+q^2t^2s_{1111}(X).
	\end{align*}
\end{example}
We now define the combinatorial statistics queue inversion $(\quinv)$, inversion $(\inv)$ and the new statistic {\em quadruple coinversion} ($\quadcoinv$) on super fillings.
\begin{definition}[Inversion and queue inversion on super fillings]\label{Def:inv}
	For a super filling $\sigma$ of $\dg'(\lambda)$, let $\hat{\sigma}$ be the augmented super filling obtained from $\sigma$ by adjoining a box with entry $0$ to the top of each column and a 
	box with entry $\infty$ to the bottom of each column. 
	
	For any triple $(a,b,c)$ of entries in $\hat{\sigma}$,
	define $\CMcal{Q}(a,b,c)=1$ if  exactly one of the following is true:
	$$\{I(a,b)=1,  I(c,b)=0,   I(a,c)=0  \}.$$
	Otherwise, $\CMcal{Q}(a,b,c)=0$.
	
	A queue inversion triple of $\sigma$ is a triple $(a,b,c)$ of entries in $\hat{\sigma}$ such that 
	\begin{enumerate}
		\item $b$ and $c$ are in the same row, with $c$ to the right of $b$;
		\item $a$ and $b$ are in the same column, with $b$ immediately below $a$;
		\item $\CMcal{Q}(a,b,c)=1$, with $c\ne 0$.
	\end{enumerate}

An {inversion triple} of $\sigma$ is a triple $(a,b,c)$ of entries in $\hat{\sigma}$ satisfying (2), (3) and
\begin{enumerate}
	\setcounter{enumi}{3}
	\item $a$ and $c$ are in the same row, with $c$ to the right of $a$.
\end{enumerate}
These are illustrated in the diagram below. Let $\quinv(\sigma)$ and $\inv(\sigma)$ be the numbers of queue inversion triples and inversion triples of $\sigma$, respectively.
\begin{figure}[H]
	\centering
	\begin{minipage}[H]{0.4\linewidth}
		\centering
		\begin{ytableau}
			a  &\none[]
			\\
			b &\none[\dots] & c\\
		\end{ytableau}
		\caption*{\small{queue inversion triple}}
	\end{minipage}
	\begin{minipage}[H]{0.4\linewidth}
		\centering
		\begin{ytableau}
			a  &\none[\dots] & c
			\\
			b
			&  \none[]\\
		\end{ytableau}
		\caption*{\small{inversion triple}}
	\end{minipage}
\end{figure}
\end{definition}




\begin{definition}[Quadruple coinversion on super fillings]\label{Def2.2}
	For a super filling $\sigma$, consider the quadruple $(z,w,u,v)$ of entries in the augmented super filling $\hat{\sigma}$ (Definition \ref{Def:inv}), arranged as in Figure \ref{F:f1}, with the columns containing $(z,u)$ and $(w,v)$ having equal height in $\sigma$.
	\begin{enumerate}
		\item If  $|w|=|v|$ and $|z|,|u|,|w|$ are distinct, then $(z,w,u,v)$ is a quadruple coinversion if and only if $w\in \mathbb{Z_{+}}$.
		\item If  $|w|=|u|$, $|z|\ne|w|$ and $|v|\ne|w|$, then $(z,w,u,v)$ is a quadruple coinversion if and only if $w\in \mathbb{Z_{-}}$.
	\end{enumerate}
	Otherwise, either the column containing $(z,u)$ is longer than the column containing $(w,v)$, or neither condition (1) nor condition (2) is satisfied. In this case, $(z,w,u,v)$ is a quadruple coinversion if and only if $(z,u,v)$ is a queue inversion triple; that is, if and only if exactly one of the following is true:
	$$\{I(z,u)=1,  I(v,u)=0,   I(z,v)=0  \}.$$
	We write $\Q(z,w,u,v)=1$ if $(z,w,u,v)$ is a quadruple coinversion; otherwise $\Q(z,w,u,v)=0$.
	Let $\quadcoinv(\sigma)$ count the number of quadruple coinversions of $\sigma$.

\end{definition}
\begin{figure}[H]
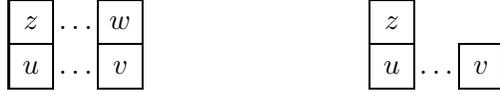

	\centering
	\begin{minipage}[H]{0.3\linewidth}
		\centering
		\begin{ytableau}
			z  &\none[\dots] & w
			\\
			u &\none[\dots] & v\\
		\end{ytableau}
	\end{minipage}
	\begin{minipage}[H]{0.3\linewidth}
		\centering
		\begin{ytableau}
			z  &\none[]
			\\
			u  &\none[\dots] & v
		\end{ytableau}
	\end{minipage}
	\caption{A quadruple $(z,w,u,v)$ and a queue inversion triple $(z,u,v)$} \label{F:f1}
\end{figure}
We next define three types of non-attacking tableaux, top-row (resp. bottom-row) increasing or decreasing tableaux, and reverse tableaux.

\begin{definition}[Non-attacking, $\quinv$- and $\inv$-non-attacking fillings]\label{Def:non-attacking}
	Given the Young diagram $\dg'(\lambda)$, two boxes $u,v\in \dg'(\lambda)$ are said to be {\em $\quinv$-attacking} each other if either
	\begin{enumerate}
		\item[$\mathrm{(I)}$] $u=(i,j)$, $v=(i,k)$, where $j\ne k$; or
		\item[$\mathrm{(II)}$] $u=(i+1,j)$, $v=(i,k)$, where $k>j$.
	\end{enumerate}
	Two boxes $u,v\in \dg'(\lambda)$ are said to be  {\em $\inv$-attacking} each other if either $\mathrm{(I)}$ or the condition obtained from $\mathrm{(II)}$ by substituting $k>j$ by $k<j$ is satisfied.
	A $\quinv$(resp. $\inv$)-non-attacking filling $\sigma$ is a filling without two $\quinv$(resp. $\inv$)-attacking boxes $u,v$ such that $|\sigma(u)|=|\sigma(v)|$.
	
	Two boxes $u,v\in \dg'(\lambda)$ are said to be {attacking} each other if either $\mathrm{(I)}$ or $\mathrm{(II)}$ or $\mathrm{(III)}$ is satisfied.
	\begin{enumerate}
		\item[$\mathrm{(III)}$] $u=(i,j)$, $v=(i-1,k)$ where $k<j$ such that $\lambda_j=\lambda_{k}$.
	\end{enumerate}
	A non-attacking filling $\sigma$ is a filling without two attacking boxes $u,v$ satisfying $|\sigma(u)|=|\sigma(v)|$. The relative positions of boxes satisfying conditions $\mathrm{(I)}$, $\mathrm{(II)}$ and $\mathrm{(III)}$ are illustrated in the diagram below.
	\begin{figure}[H]
		\centering
		\begin{minipage}[H]{0.3\linewidth}
			\centering
			\begin{ytableau}
				u &\none[\dots] & v\\
				\none[] & \none[]\\
			\end{ytableau}
		   \caption*{\small{Case (I)}}
		\end{minipage}
		\begin{minipage}[H]{0.3\linewidth}
			\centering
			\begin{ytableau}
				u	 &\none[\dots] 
				\\
				\none[]
				&  \none[\dots] & v\\
			\end{ytableau}
		\caption*{\small{Case (II)}}
		\end{minipage}
		\begin{minipage}[H]{0.3\linewidth}
			\centering
			\begin{ytableau}
				\none[]  &\none[\dots] & u
				\\
				v
				&  \none[\dots]\\
			\end{ytableau}
		\caption*{\small{Case (III)}}
		\end{minipage}
	\end{figure}
\end{definition}
\begin{definition}[Top-row and bottom-row increasing/decreasing tableaux]\label{Def:quad_sorted}
	A positive filling $\sigma=\sigma_{\lambda_1}\sqcup\cdots\sqcup \sigma_{1}$ of the Young diagram $\dg'(\lambda)$ is called top-row (resp. bottom-row) increasing or decreasing if, for each $1\le j\le \lambda_1$, the entries of the top row (resp. bottom-row) in $\sigma_j$ are weakly increasing or weakly decreasing from left to right.
\end{definition}
\begin{definition}[Reverse tableaux]\cite[Definition 3.1]{JL24}
	Given a filling $\sigma=\sigma_{\lambda_1} \sqcup \cdots \sqcup \sigma_1$, let $\rev(\sigma)=\rev(\sigma_{\lambda_1})\sqcup\cdots\sqcup \rev(\sigma_1)$ be the reverse of $\sigma$, where $\rev(\sigma_i)$ is obtained from $\sigma_i$ by reversing the sequence of entries in each row. 
\end{definition}
Finally, we present superization formulas in terms of inversion and queue inversion statistics. As demonstrated by Haglund, Haiman and Loehr \cite{HHL04}, superization is the first step in deducing a combinatorial formula of $J_{\lambda}(X;q,t)$ from $\tilde{H}_{\lambda}(X;q,t)$. Using the same approach, Ayyer, Mandelshtam and Martin \cite{AMM23} proved a queue-inversion analogue.

\begin{proposition} \cite[Equations (40) and (63)]{HHL04}\cite[Equation (12) and Theorem 5.3]{AMM23}\label{propquinvJ} 
	For a partition $\lambda$ of $n$, let $\M(\lambda')$ be the set of super fillings of the Young diagram $\dg'(\lambda)$. Then for $\vartheta\in \{\inv,\quinv\}$ we have
	\begin{align}\label{E:supJ}
		J_{\lambda}(X;q,t)
		&=t^{n(\lambda)+n} \sum_{\sigma\in \M(\lambda') } (-1)^{m(\sigma)} t^{-p(\sigma)-\vartheta(\sigma)}q^{\maj(\sigma)} x^{|\sigma|},\\
		&=t^{n(\lambda)+n} \sum_{\sigma\in \M(\lambda')\atop |\sigma|\ \vartheta\textrm{-non-attacking} } (-1)^{m(\sigma)} t^{-p(\sigma)-\vartheta(\sigma)}q^{\maj(\sigma)} x^{|\sigma|}.\label{E:supJ2}
	\end{align}
\end{proposition}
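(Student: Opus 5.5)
This statement is quoted from \cite{HHL04} and \cite{AMM23}, so my plan is to reconstruct the standard superization argument rather than devise a new one. I would start from the combinatorial formula \eqref{E:mmp1} for $\tilde H_\lambda$, and from its queue-inversion analogue, the first equality of \eqref{E:mmp21}. I would then invert the relation \eqref{E:HtoJ}. Substituting $q\leftrightarrow$ appropriately and $t\to t^{-1}$, one has
\begin{align*}
J_\lambda(X;q,t)=t^{n(\lambda)}\tilde H_\lambda[X(1-t);q,t^{-1}],
\end{align*}
up to the conventions of the paper. Here the roles of $q,t$ are swapped by working on $\dg'(\lambda)$, since $\tilde H_\lambda(X;q,t)=\tilde H_{\lambda'}(X;t,q)$. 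So the task reduces to computing the plethysm $f[X-tX]$ of a sum over positive fillings.

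The key step is the superization lemma of \cite{HHL04}. Let $F(X)=\sum_\tau q^{\maj(\tau)}t^{\vartheta(\tau)}x^\tau$, and suppose $F$, summed over each class of fillings with a fixed standardization descent structure, is a combination of Gessel quasisymmetric functions $Q_{n,D}$. Then $F[X-Y]$ is obtained by summing over super fillings. Negative letters contribute $(-1)$ times the $Y$-variables, and the tie-breaking among equal negative letters is reversed; this is exactly encoded by $I(a,a)=1$ for $a\in\mathbb Z_-$.

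For this to work I must check that both $\maj$ and $\vartheta\in\{\inv,\quinv\}$ are determined by comparisons $I(\cdot,\cdot)$ between pairs of entries. I also need a fixed reading order under which standardization preserves the statistics: columns read top to bottom, right to left, for $\inv$; and a suitable row reading for $\quinv$, as in \cite{AMM23}.

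Next I would specialize $Y=tX$. Each negative entry then contributes $-t\,x_{|\sigma(u)|}$, giving the sign $(-1)^{m(\sigma)}$ and the weight $t^{m(\sigma)}=t^{n-p(\sigma)}$. Replacing $t$ by $t^{-1}$ in the $t^{\vartheta}$ part and collecting powers should produce exactly $t^{n(\lambda)+n-p(\sigma)-\vartheta(\sigma)}q^{\maj(\sigma)}$. This is \eqref{E:supJ}. The bookkeeping of $t$-exponents and the $q\leftrightarrow t$ swap is routine but error-prone, and I would verify it on $\lambda=(2)$ against Example \ref{Example:p}.

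For \eqref{E:supJ2}, I would construct a sign-reversing involution on super fillings $\sigma$ for which $|\sigma|$ has a $\vartheta$-attacking pair. Take the first attacking pair $u,v$ with $|\sigma(u)|=|\sigma(v)|$ in a fixed order, and toggle the sign of the entry in one designated box. This changes $m(\sigma)$ and $p(\sigma)$ by one each. One then checks that $\vartheta(\sigma)$ changes by exactly $\mp1$ and that $\maj$ is unchanged, so $p+\vartheta$ is preserved and the terms cancel.

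\textbf{Main obstacle.} The hard part is this local verification. Toggling a sign affects the triples $(a,b,c)$ containing the toggled box, and one needs all of them except exactly one to be unaffected. In particular, descents $I(\sigma(z),\sigma(\South(z)))$ must not change. This forces the choice of which box to toggle: for attacking type (II), the one not vertically adjacent to the equal-valued entry. I would first try the bottom-most, then right-most box of the attacking pair and do a case analysis following \cite[Section 5]{AMM23}.
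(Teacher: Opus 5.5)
Your derivation of \eqref{E:supJ} follows the superization argument of \cite{HHL04,AMM23}. The paper relies on exactly this and gives no proof of its own. The steps are: $J_\lambda(X;q,t)=t^{n(\lambda)}\tilde H_\lambda[X(1-t);q,t^{-1}]$, superize, then set $y_i=-tx_i$. Your exponent bookkeeping $t^{n(\lambda)}(-t)^{m(\sigma)}t^{-\vartheta(\sigma)}$ with $m=n-p$ is right. The reading orders that make the paper's $I$ agree with standardization are: rows top to bottom, and within a row left to right for $\inv$, right to left for $\quinv$. In every comparison $I(x,y)$ occurring in $\maj$ or $\vartheta$, $x$ is read before $y$.

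The gap is in the involution for \eqref{E:supJ2}, at the point you flag, and your tentative rule fails. Since $I(x,y)=I(x,\bar y)$, changing the sign of an entry affects only comparisons in which it is the \emph{first} argument of $I$ against an entry of the same absolute value. These are comparisons with boxes read \emph{after} it. So you must toggle the box of the attacking pair that is read first:
\begin{itemize}
\item for a type (II) pair, the upper box, for both statistics;
\item for a type (I) pair, the right box for $\quinv$ and the left box for $\inv$.
\end{itemize}
Your ``bottom-most'' choice toggles the lower box $v=(i,k)$ of a $\quinv$ pair with $u=(i+1,j)$, $k>j$. In the triple $(\sigma(u),\sigma(i,j),\sigma(v))$, the entry $\sigma(v)$ occurs only in $I(\sigma(u),\sigma(v))$ and in $I(\sigma(v),\sigma(i,j))$ with $|\sigma(v)|\ne|\sigma(i,j)|$. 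Neither comparison changes, so $\quinv$ stays the same while $p$ changes, and the terms do not cancel. ``Right-most'' fails in the same way for $\inv$ type (I).

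The working choice is the one in \cite{HHL04}, \cite{AMM23}, and the paper's own proof of Lemma \ref{lem1}:
\begin{itemize}
\item let $v$ be the last box in reading order that attacks some box with the same absolute value $a=|\sigma(v)|$;
\item let $u$ be the last box attacking $v$ with $|\sigma(u)|=a$;
\item toggle $\sigma(u)$.
\end{itemize}
This works for the following reasons.
\begin{itemize}
\item The choice depends only on $|\sigma|$, so the map is an involution.
\item $u$ is unrestricted, since otherwise $\South(u)$ would be read after $v$ and attack $v$. Hence $\maj$ is unchanged.
\item $v$ is the only box read after $u$ that attacks $u$ with absolute value $a$. Any other such box is either read after $v$, contradicting the choice of $v$, or is read between $u$ and $v$. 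In the latter case a short check shows it attacks $v$, contradicting the choice of $u$.
\item Hence $\vartheta$ changes by exactly one, increasing precisely when $\sigma(u)$ becomes negative. This keeps $p+\vartheta$ fixed while flipping the sign.
\end{itemize}
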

For a complete description of superization, we refer the reader to \cite[Section 8]{HHL04}. 



\section{Flip operators}\label{S:3}
In this section, we define four flip operators that act on pairs of adjacent columns of equal height within a filling. These flip operators are modifications of the inversion flip operator introduced by Loehr and Niese \cite{LN12}. Each flip operator is paired with one of the following statistics: inversion ($\inv$), queue inversion ($\quinv$),  $\eta$ from (\ref{eqthm1}), or quadruple coinversion ($\quadcoinv$). Essentially, each operator is an involution on fillings that alters its paired statistic by at most $\pm 1$ while preserving the major index.

Here we define a new flip operator for the $\quadcoinv$ statistic on super fillings. The other three flip operators were originally introduced in \cite{AMM23,JL25,LN12}.


\begin{definition}[Flip operators]\label{Def3.1}
	For a  super filling $\sigma$ of $\dg'(\lambda)$ and integers $r,i$ satisfying $r\le \lambda_i=\lambda_{i+1}$, let $t_i^{r}$ denote the operator that interchanges the entries $\sigma(r,i)$ and $\sigma(r,i+1)$. For any interval $[r,s]$ with $1\le r\le s\le \lambda_i$, define the operator
	\begin{align*}
		t_i^{[r,s]}:=t_i^{r}\circ t_i^{r+1}\cdots \circ t_i^{s},
	\end{align*}
	which swaps the entries of boxes $(u,i)$ and $(u,i+1)$ for all rows $u$ from $r$ to $s$.
	\begin{enumerate}
		\item The queue-inversion flip operator $\rho_i^r$ is defined as $\rho_i^r=t_i^{[k,r]}$, where
		$k$ is the largest integer satisfying $1\le k\le r$ and
		\begin{align*}
			&\quad\,\CMcal{Q}(\sigma(k,i),\sigma(k-1,i),\sigma(k-1,i+1))\notag\\
			&=\CMcal{Q}(\sigma(k,i+1),\sigma(k-1,i),\sigma(k-1,i+1)).
		\end{align*}
	   \item The inversion flip operator $\zeta_i^r$ is defined as $\zeta_i^r=t_i^{[r,k]}$, where $k$ is the smallest integer such that $k\ge r$ and 
	   \begin{align*}
	   	&\quad\,\CMcal{Q}(\sigma(k+1,i),\sigma(k,i),\sigma(k+1,i+1))\notag\\
	   	&=\CMcal{Q}(\sigma(k+1,i),\sigma(k,i+1),\sigma(k+1,i+1)).
	   \end{align*}   
       \item The flip operator $\delta_i^r$ is defined as $\delta_i^r=t_i^{[k,r]}$, where $k$ is the largest integer such that $1\le k\le r$ and $\sigma(k,i)=\sigma(k,i+1)$. If no such $k$ exists, we set $k=1$.
       \item The quadruple-coinversion flip operator $\xi_i^r$ is defined as $\xi_i^r=t_i^{[k,r]}$, where $k$ is the largest integer satisfying $1\le k\le r$ and
       \begin{align*}
       	&\quad\,\CMcal{Q}(\sigma(k,i),\sigma(k,i+1),\sigma(k-1,i),\sigma(k-1,i+1)) \notag\\
       	&=\CMcal{Q}(\sigma(k,i+1),\sigma(k,i),\sigma(k-1,i),\sigma(k-1,i+1)).
       \end{align*}
	\end{enumerate}
	For simplicity, we write
	\begin{align*}
		\rho_i=\rho_i^{\lambda_i},\quad \zeta_i=\zeta_i^{1} ,\quad \xi_i=\xi_i^{\lambda_i} \quad\mbox{ and }\quad \delta_i=\delta_i^{\lambda_i}.
	\end{align*}
	By construction,  each of the operators $\rho_i^r$, $\zeta_i^r$, $\xi_i^r$ and $\delta_i^r$ is an involution.
\end{definition}



The essential properties of the queue-inversion flip operators extend identically from positive fillings to super fillings.

\begin{lemma}\cite[Lemma 5.3]{JL24}\label{lem3.1}
	For a partition $\lambda$ and an integer $i$ such that $\lambda_i=\lambda_{i+1}$, let $\sigma$ be a super filling of the Young diagram $\dg'(\lambda)$. 
	
	Let $\raisebox{-8pt}{\young(ab,cd)}$ be part of $\sigma$ where $a=\sigma(r+1,i)$, $b=\sigma(r+1,i+1)$, $c=\sigma(r,i)$ and $d=\sigma(r,i+1)$. If $\CMcal{Q}(a,c,d)=\CMcal{Q}(b,c,d)$, then
	\begin{align}\label{E:maj11}
		\maj(\sigma)&=\maj(\rho_i^{r}(\sigma)),\\
		\label{E:quinv1}\quinv(\sigma)&=\quinv(\rho_i^{r}(\sigma))-\CMcal{Q}(a,d,c)+\CMcal{Q}(a,c,d).
	\end{align}
	That is, the number of queue inversion triples induced between column $i$ or $i+1$ and column $j$ of $\sigma$ for any $j>i+1$ is invariant under $\rho_i^r$.
\end{lemma}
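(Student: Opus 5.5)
Set $\sigma'=\rho_i^r(\sigma)=t_i^{[k,r]}(\sigma)$, where $k\le r$ is the largest index at which the defining equality of $\rho_i^r$ holds. The hypothesis $\CMcal{Q}(a,c,d)=\CMcal{Q}(b,c,d)$ is exactly that equality at level $r+1$; we read it as saying that the swap is stopped by the equal entries in row $r+1$. The proof then amounts to comparing $\maj$ and $\quinv$ of $\sigma$ and $\sigma'$ by local bookkeeping. First we split the quantities into contributions supported on columns $i,i+1$ and contributions involving a third column $j$. Columns $i$ and $i+1$ have the same height and hence the same leg at every row, so $\maj$ only sees descents inside columns $i,i+1$.

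\textbf{Major index.} For a row $s$ strictly between $k$ and $r$, both rows $s$ and $s-1$ are swapped, so the descent pairs are merely exchanged between the two columns. At the bottom boundary $k$, row $k$ is swapped but row $k-1$ is not. The stopping condition $\CMcal{Q}(\sigma(k,i),\sigma(k-1,i),\sigma(k-1,i+1))=\CMcal{Q}(\sigma(k,i+1),\sigma(k-1,i),\sigma(k-1,i+1))$ is the standard equivalence (as in \cite{AMM23,LN12}, which extends verbatim to the order $<$ on $\A\cup\{0,\infty\}$ with the modified $I$) saying that the number of descents among $(\sigma(k,i),\sigma(k-1,i))$ and $(\sigma(k,i+1),\sigma(k-1,i+1))$ is unchanged by the swap. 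When $k=1$, row $0$ consists of $\infty$'s and this holds trivially. At the top boundary $r+1$, the hypothesis $\CMcal{Q}(a,c,d)=\CMcal{Q}(b,c,d)$ gives the same conclusion for the pairs $(a,c),(b,d)$ versus $(a,d),(b,c)$. Since legs agree in the two columns, \eqref{E:maj11} follows. The key sub-lemma is the case check that $\CMcal{Q}(x,c,d)=\CMcal{Q}(y,c,d)$ iff $I(x,c)+I(y,d)=I(x,d)+I(y,c)$. It is a finite check over relative orders of $x,y,c,d$, including ties, where the sign rule for $I$ on equal negative letters must be tracked.

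\textbf{Queue inversions.} Next we handle triples involving a third column $j$. A queue inversion triple $(z,u,v)$ with $z,u$ in column $j$ and $v$ in column $i$ or $i+1$ sees only the multiset of row entries in those two columns, which is preserved. A triple with $z,u$ in column $i$ or $i+1$ and $v$ in column $j>i+1$ is the content of the final sentence of the lemma. Summing over the two columns, this count equals $\sum_x \CMcal{Q}(\sigma(s,x),\sigma(s-1,x),v)$. By the same sub-lemma as above (now with $v$ as the third entry), it is invariant: interior rows are simply exchanged, and at the boundary rows the stopping conditions force the counts to agree. For $j<i$ there is no dependence. That leaves triples inside columns $i,i+1$, namely $(\sigma(s,i),\sigma(s-1,i),\sigma(s-1,i+1))$. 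For rows $s\le r$ these are matched between $\sigma$ and $\sigma'$ using the boundary condition at $k$. The only unmatched term is at $s=r+1$, where $\sigma$ contributes $\CMcal{Q}(a,c,d)$ and $\sigma'$ contributes $\CMcal{Q}(a,d,c)$. This yields \eqref{E:quinv1}.

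\textbf{Main obstacle.} The main difficulty is the triple-with-third-column invariance at the bottom boundary row $k$. There one must show that the condition defined using the neighbouring column's entries also controls $\CMcal{Q}(\cdot,\cdot,v)$ for an arbitrary $v$. I would first try to reduce this to the positive case of \cite[Lemma 5.3]{JL24} by standardization. Equal negative letters behave as if later in reading order, so one replaces the super filling by a positive filling with the same $I$-values on all relevant pairs, after which the positive-filling argument applies unchanged. If that fails, I would fall back on a direct case analysis of the relative order of $\sigma(k,i),\sigma(k,i+1),\sigma(k-1,i),\sigma(k-1,i+1),v$.
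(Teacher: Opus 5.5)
There is a genuine gap in how you handle the triples lying inside columns $i,i+1$. For $k<s\le r$ put $x=\sigma(s,i)$, $y=\sigma(s,i+1)$, $c=\sigma(s-1,i)$ and $d=\sigma(s-1,i+1)$. In $\sigma$ the triple at row $s$ is $(x,c,d)$, but in $\rho_i^r(\sigma)$ it is $(y,d,c)$. The roles of ``directly below'' and ``to the right'' are interchanged. So, unlike the descents and the third-column triples, nothing here is merely exchanged, and the boundary condition at $k$ says nothing about these rows.

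The equality $\CMcal{Q}(x,c,d)=\CMcal{Q}(y,d,c)$ is false in general: $x=3$, $y=4$, $c=1$, $d=2$ give $1$ and $0$. What rescues it is the maximality of $k$, which your proposal never uses. At every $s$ with $k<s\le r$ the defining equality of $\rho_i^r$ fails, i.e.\ $\CMcal{Q}(x,c,d)\ne\CMcal{Q}(y,c,d)$; in the example above it holds, so the flip would have stopped there. This failure forces $c\ne d$. Then $\CMcal{Q}(y,c,d)+\CMcal{Q}(y,d,c)=1$, which gives $\CMcal{Q}(y,d,c)=1-\CMcal{Q}(y,c,d)=\CMcal{Q}(x,c,d)$. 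This is the one place where the stopping rule matters; for an arbitrary block swap the $\quinv$ formula is false.

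The third-column step at the two boundary rows also does not follow from your sub-lemma as stated. That sub-lemma is an equivalence between two equalities, whereas you need the additive statement $\CMcal{Q}(x,c,v)+\CMcal{Q}(y,d,v)=\CMcal{Q}(y,c,v)+\CMcal{Q}(x,d,v)$ for arbitrary $v$. You flag this yourself but defer it to standardization or case analysis. Standardization needs care, since it does not commute with swapping two equal entries of a row.

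Both gaps close with one identity, valid for all entries of $\hat{\sigma}$ with ties and negative letters included:
\[
\CMcal{Q}(x,c,d)=I(d,c)+I(x,d)-I(x,c).
\]
It follows from the definition of $\CMcal{Q}$ once you check, using the tie rules for $I$, two implications:
\begin{itemize}
\item[(i)] $I(d,c)=0$ implies $I(x,c)\le I(x,d)$;
\item[(ii)] $I(d,c)=1$ implies $I(x,d)\le I(x,c)$.
\end{itemize}
The identity then gives three things.
\begin{itemize}
\item[(a)] $\CMcal{Q}(x,c,d)-\CMcal{Q}(y,c,d)=I(x,d)+I(y,c)-I(x,c)-I(y,d)$. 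This is your sub-lemma, and hence the $\maj$ bookkeeping at rows $k$ and $r+1$.
\item[(b)] For a $2\times 2$ block with top row $x,y$ and bottom row $c,d$, the discrepancy $\CMcal{Q}(x,c,v)+\CMcal{Q}(y,d,v)-\CMcal{Q}(y,c,v)-\CMcal{Q}(x,d,v)$ equals the same expression as in (a). It therefore vanishes at row $k$ by the stopping condition and at row $r+1$ by the hypothesis $\CMcal{Q}(a,c,d)=\CMcal{Q}(b,c,d)$.
\item[(c)] $\CMcal{Q}(y,c,d)+\CMcal{Q}(y,d,c)=I(c,d)+I(d,c)$, which equals $1$ when $c\ne d$. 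When $c=d$, the value $\CMcal{Q}(x,c,c)=I(c,c)$ does not depend on $x$, so the defining equality holds there. This is exactly what the interior step needs.
\end{itemize}
The paper gives no argument of its own beyond citing \cite[Lemma 5.3]{JL24} and asserting that it extends verbatim to super fillings, so a direct verification along these lines is what your proposal has to supply.
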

The following lemma describes the property of the quadruple-coinversion flip operators on non-attacking super fillings; notably, these operators preserve the non-attacking condition. Throughout this paper, we use the notation $\chi(A)$ for the indicator function of a statement $A$: $\chi(A)=1$ if $A$ is true, and $\chi(A)=0$ otherwise.

\begin{lemma}\label{lem3.2}
	For a partition $\lambda$ and an integer $i$ such that $\lambda_i=\lambda_{i+1}$, let $\sigma$ be a non-attacking super filling of the Young diagram $\dg'(\lambda)$. 
	
	Let $\raisebox{-8pt}{\young(ab,cd)}$ be part of $\sigma$ where $a=\sigma(r+1,i)$, $b=\sigma(r+1,i+1)$, $c=\sigma(r,i)$ and $d=\sigma(r,i+1)$. If $\Q(a,b,c,d)=\Q(b,a,c,d)$, then
	\begin{align}
		\label{E:maj1}
		\maj(\sigma)&=\maj(\xi_i^{r}(\sigma)).\\
		\label{E:tq1}
		\quadcoinv(\sigma)&=\quadcoinv(\xi_i^{r}(\sigma))-\Q(a,d,c)+\Q(a,c,d).
	\end{align}
	That is, the number of quadruple coinversions located between column $i$ or $i+1$ and column $j$ of $\sigma$ for any $j\not \in \{i,i+1\}$ is invariant under $\xi_i^r$. Moreover, the number of (un)restricted boxes in each row is invariant, and $\xi_i^{r}(\sigma)$ remains a non-attacking filling.
\end{lemma}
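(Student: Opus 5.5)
The plan is to follow the proof of Lemma \ref{lem3.1}, i.e.\ the Loehr--Niese style analysis, but with quadruples instead of queue inversion triples. Write $\xi_i^r=t_i^{[k,r]}$, where $k\le r$ is the largest index with $\Q(\sigma(k,i),\sigma(k,i+1),\sigma(k-1,i),\sigma(k-1,i+1))=\Q(\sigma(k,i+1),\sigma(k,i),\sigma(k-1,i),\sigma(k-1,i+1))$. The hypothesis $\Q(a,b,c,d)=\Q(b,a,c,d)$ concerns row $r+1$, the row just above the swapped block. I first record the basic local fact used throughout. For a $2\times 2$ block $\raisebox{-8pt}{\young(ab,cd)}$ in two adjacent equal columns, the sum $\Q(a,b,c,d)+\Q(b,a,c,d)$ depends on $\{a,b\}$ and $\{c,d\}$ only as multisets, up to the correction $\Q(a,c,d)-\Q(a,d,c)$. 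I verify this by a short case check over the branches of Definition \ref{Def2.2}, namely case (1), case (2) and the queue-inversion default. The non-attacking hypothesis keeps this finite: in a non-attacking super filling, $|a|\ne|b|$ and $|c|\ne|d|$, and $|b|\ne|c|$ by attacking rule (II). The only possible coincidences are therefore $|a|=|c|$ and $|b|=|d|$ (restricted boxes), and $|a|=|d|$, which attack only if $\lambda$ forces rule (III). Rule (III) applies here because the columns have equal height, so $|a|\ne|d|$ as well.

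\textbf{Step 1 ($\maj$ and restricted boxes).} Inside rows $k,\dots,r$ the two columns are swapped wholesale, so descents strictly inside the block are unchanged. Only the boundary pairs matter: the pair between row $r$ and row $r+1$, and the pair between row $k-1$ and row $k$. At the top, the pair $(a,c)$ becomes $(a,d)$ and $(b,d)$ becomes $(b,c)$. The equality $\Q(a,b,c,d)=\Q(b,a,c,d)$ should force $I(a,c)+I(b,d)=I(a,d)+I(b,c)$, with equal legs since the columns have equal height; this is the same argument as (\ref{E:maj11}). At the bottom, the maximality of $k$ means the equality fails in row $k$. This failure is exactly the situation in which swapping rows $k$ and above against row $k-1$ preserves the descent count, and I check it by the same case table. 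The restricted/unrestricted status of a row pair changes only at the two boundaries. The case analysis shows that the total number of restricted boxes per row is preserved, and in particular a restricted box just moves to the other column.

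\textbf{Step 2 ($\quadcoinv$).} I split the quadruples into three groups. The first group consists of quadruples not touching columns $i,i+1$; these are trivially invariant. The second group consists of quadruples between column $i$ or $i+1$ and a column $j\notin\{i,i+1\}$. For a fixed row pair and fixed $j$, the two quadruples coming from columns $i$ and $i+1$ are exchanged by the swap whenever both rows lie in the block or both lie outside it. So only the boundary row pairs $(r+1,r)$ and $(k,k-1)$ need checking. For these I show, as in Lemma \ref{lem3.1}, that the equal-$\Q$ conditions make the combined count over columns $i,i+1$ symmetric. This uses the case (1)/(2) clauses together with the fact that $|\cdot|$-values coinciding across rows are controlled by non-attacking. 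The third group consists of the quadruples internal to columns $i,i+1$. In the block interior these are swapped, while at the top boundary the quadruple $(a,b,c,d)$ has equal value before and after by hypothesis. At the bottom boundary, the failure of equality at row $k$ is exactly offset, and the net change is $\Q(a,c,d)-\Q(a,d,c)$, which gives (\ref{E:tq1}).

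\textbf{Step 3 (non-attacking preserved).} Rule (I) is about rows and is preserved because rows are permuted. For rules (II) and (III), boxes in columns $i,i+1$ attack the row below in the same positions relative to column $j$ for all $j\ne i,i+1$, since equal heights give $\lambda_i=\lambda_{i+1}$ and (III) applies symmetrically. Thus attacks can newly arise only between columns $i$ and $i+1$ across the boundaries. There, $|a|=|d|$ after the swap would mean the old $b$ equaled the old $d$ in absolute value, i.e.\ a restricted box, and the $\Q$-equality in that restricted case rules this out. I expect this restricted-box boundary analysis, together with the bottom-boundary bookkeeping for $\quadcoinv$, to be the main obstacle. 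It needs a careful enumeration over the sign patterns of $w$ in clauses (1) and (2) of Definition \ref{Def2.2}.
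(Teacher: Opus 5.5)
Your outline has the right skeleton: only the two boundary row pairs can matter, and you then reduce to Lemma \ref{lem3.1}. However, the boundary bookkeeping is inverted.

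\textbf{Where the $\pm1$ comes from.} You state the definition of $k$ correctly, but in Step 1 you say that maximality makes the equality \emph{fail} at row $k$. By Definition \ref{Def3.1}(4), the defining equality \emph{holds} at row $k$ and \emph{fails} at rows $k+1,\dots,r$. Step 2 inherits this error: you call the top square invariant ``by hypothesis'' and put the $\pm1$ at the bottom, when it is exactly the other way round.
\begin{itemize}
\item At the top, $(a,b)$ over $(c,d)$ becomes $(a,b)$ over $(d,c)$. The hypothesis $\Q(a,b,c,d)=\Q(b,a,c,d)$ is about permuting the \emph{upper} pair, which the flip never does. Once the four entries are known to be distinct, $\Q(a,b,c,d)=\Q(a,c,d)\ne\Q(a,d,c)=\Q(a,b,d,c)$. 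So this square is the source of the correction term in \eqref{E:tq1}; that term involves $a$ from row $r+1$, so it could never arise at row $k$.
\item At the bottom, it is the termination \emph{equality} $\Q(s,t,u,v)=\Q(t,s,u,v)$, with $(s,t)$ in row $k$, that makes that square and $\maj$ there invariant.
\item Interior squares are not invariant merely ``because they are swapped''. The square $(\alpha,\beta,\gamma,\delta)$ becomes $(\beta,\alpha,\delta,\gamma)$, and $\Q$ is not symmetric under this move. The paper obtains invariance from the \emph{failure} of the equality in rows $k+1,\dots,r$, combined with $\Q(\beta,\alpha,\gamma,\delta)\ne\Q(\beta,\alpha,\delta,\gamma)$.
\end{itemize}

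\textbf{The missing distinctness claim.} The paper's proof rests on a claim that should replace your opening ``local fact''. In a non-attacking filling, suppose a square in columns $i,i+1$ with upper pair $(s,t)$ and lower pair $(u,v)$ satisfies $\Q(s,t,u,v)=\Q(t,s,u,v)$. Then $|s|,|t|,|u|,|v|$ are pairwise distinct, away from the augmented rows.
\begin{itemize}
\item Non-attacking already excludes $|s|=|t|$, $|u|=|v|$, $|s|=|v|$ (rule (II)) and $|t|=|u|$ (rule (III)). You have (II) and (III) interchanged, but harmlessly.
\item If $|s|=|u|$, clause (2) of Definition \ref{Def2.2} and the default clause give the two sides as $\chi(s\in\Z_+)$ and $\chi(s\in\Z_-)$.
\item If $|s|\ne|u|$ but $|t|=|v|$, clause (1) gives them as $\chi(t\in\Z_+)$ and $\chi(t\in\Z_-)$.
\end{itemize}
Applied to the hypothesis square and to the terminating square, this turns every boundary quadruple into a queue inversion triple, with $\Q(a,c,d)=\Q(b,c,d)$ and $\Q(s,u,v)=\Q(t,u,v)$.
\begin{itemize}
\item $\maj$ and the cross-column quadruples then follow from Lemma \ref{lem3.1}. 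The only extra case is a restricted neighbouring column of equal height, where the four relevant quadruples coincide.
\item The restricted-box counts and the non-attacking property survive because no boundary box is restricted, before or after the flip. The only possible new attacks are $|a|=|c|$, $|b|=|d|$, $|s|=|u|$ and $|t|=|v|$.
\end{itemize}
You glimpse this in Step 3, but you need it at both boundaries. At the bottom, your misreading of the termination condition leaves you with no equality to use.

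\textbf{A caution about the interior step.} Suppose both columns are restricted in a square $(\alpha,\beta,\gamma,\delta)$. Then $\Q(\alpha,\beta,\gamma,\delta)=\chi(\alpha\in\Z_+)$, $\Q(\beta,\alpha,\gamma,\delta)=\chi(\alpha\in\Z_-)$ and $\Q(\beta,\alpha,\delta,\gamma)=\chi(\beta\in\Z_+)$. So such a square never stops the flip, and it is invariant only if $\alpha$ and $\beta$ have the same sign. This is automatic for positive fillings but can fail for super fillings.

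For example, take two columns of height $3$ whose rows read $(1,\overline{2})$, $(1,2)$, $(3,4)$ from the top, with $r=3$. The flip stops at $k=2$, and $\quadcoinv$ drops by $2$ rather than the $1$ predicted by \eqref{E:tq1}. The paper's interior argument, via $\Q(\beta,\alpha,\gamma,\delta)\ne\Q(\beta,\alpha,\delta,\gamma)$, breaks down in exactly this case. So the lemma as stated needs an extra hypothesis there.
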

\begin{proof}
	Let $\raisebox{-8pt}{\young(st,uv)}$ be part of $\sigma$ such that $\xi_i^r$ terminates at row $\raisebox{-2pt}{\young(st)}$, say row $\kappa$. We claim that the absolute values $|s|$, $|t|$, $|u|$ and $|v|$ are all distinct, unless $s$ and $t$ lie in the bottom row of $\sigma$. Because $\sigma$ is non-attacking, we have $|s|\ne|t|$, $|s|\ne|v|$, $|t|\ne|u|$, and $|u|\ne|v|$. The equality $|u|=|v|$ occurs only when $u=v=\infty$, which corresponds precisely to the case where $s$ and $t$ are in the bottom row.
	
	If $|s|=|u|$, then $|v|\ne|s|$ implies $I(v,u)\ne I(s,v)$. If $s\in\mathbb{Z_{+}}$, then $I(s,u)=0$, so $(s,u,v)$ is a queue inversion triple and hence $\Q(s,t,u,v)=1$. However, $\Q(t,s,u,v)=0$ by Definition \ref{Def2.2} (2). If $s\in\mathbb{Z_{-}}$, then $I(s,u)=1$, so $(s,u,v)$ is not a queue inversion triple and $\Q(s,t,u,v)=0$. By Definition \ref{Def2.2} (2), $\Q(t,s,u,v)=1$. In both cases, $\Q(s,t,u,v)\ne \Q(t,s,u,v)$.

	 Suppose that $|s|\ne|u|$ and $|v|=|t|$. By Definition \ref{Def2.2} (1), $\Q(s,t,u,v)=1$ if $t\in\mathbb{Z_{+}}$.
	 Meanwhile, $\Q(t,s,u,v)=\Q(t,u,v)=0$ for $t\in \mathbb{Z}_+$. If $t\in\mathbb{Z_{-}}$, then $\Q(s,t,u,v)=0$ whereas $\Q(t,s,u,v)=\Q(t,u,v)=1$. In both cases, $\Q(s,t,u,v)\ne \Q(t,s,u,v)$.
	 
	Since all cases contradict the terminating condition $\Q(s,t,u,v)=\Q(t,s,u,v)$, we conclude that $|s|$, $|t|$, $|u|$, $|v|$ must be distinct. By a similar argument, the absolute values $|a|$, $|b|$, $|c|$, $|d|$ are also distinct (except when $c$ and $d$ are in the top row). Given Definition \ref{Def2.2}, the condition $\Q(a,b,c,d)=\Q(b,a,c,d)$ for \eqref{E:tq1} together with the terminating condition $\Q(s,t,u,v)=\Q(t,s,u,v)$ implies
	\begin{align}
		\Q(a,c,d)=\Q(b,c,d)=\Q(a,b,c,d)=\Q(b,a,c,d), \label{eqpflem3.2a}\\
		\Q(s,u,v)=\Q(t,u,v)=\Q(s,t,u,v)=\Q(t,s,u,v).\label{eqpflem3.2b}
	\end{align}
	Consequently, the filling $\xi_{i}^r(\sigma)$ remains non-attacking, and the operator $\xi_i^r$ preserves the number of (un)restricted boxes in each row. Moreover, the first equalities in \eqref{eqpflem3.2a}--\eqref{eqpflem3.2b} and \eqref{E:maj11} ensure that $\maj(\sigma)=\maj(\xi_i^r(\sigma))$.
	
	It remains to prove (\ref{E:tq1}). Given the distinct absolute values $|a|$, $|b|$, $|c|$ and $|d|$, we have 
	$\Q(a,b,d,c)\ne \Q(a,b,c,d)$. Thus, swapping $c$ and $d$ creates a new quadruple coinversion precisely when $\Q(a,b,d,c)=1$. Therefore, it suffices to show that the total number of quadruple coinversions located in all other rows and columns remains unchanged under $\xi_i^r$. 
	
	First, the terminating condition $\Q(s,t,u,v)=\Q(t,s,u,v)$ guarantees that swapping $s$ and $t$ preserves the count of quadruple coinversions between rows $\kappa$ and $\kappa-1$. Second, we assert that the operator $\xi_i^r$ preserves the number of quadruple coinversions between columns $i$ and $i+1$, as well as between rows $\kappa$ and $r$.
		
	Let $\pi=\raisebox{-8pt}{\young(\alpha\beta,\gamma\delta)}$ be a square situated between rows $\kappa$ and $r$ and between columns $i$ and $i+1$. Since $\xi_i^r$ reverses both rows of $\pi$, we have
	$\Q(\alpha,\beta,\gamma,\delta)\ne \CMcal{Q}(\beta,\alpha,\gamma,\delta)$. Because the row containing $\gamma$ and $\delta$ is not the terminating row, it follows that $\Q(\beta,\alpha,\gamma,\delta)\ne \Q(\beta,\alpha,\delta,\gamma)$. Consequently,  $\CMcal{Q}(\alpha,\beta,\gamma,\delta)=\CMcal{Q}(\beta,\alpha,\delta,\gamma)$, which proves the assertion.
	
	Finally, we analyze the effect of $\xi_i^r$ on the number of quadruple coinversions arising from column $i$ or $i+1$ and any other column $j\not \in \{i,i+1\}$. It is sufficient to verify the invariance under $t_i^{(k)}$ for $k\in \{\kappa,r\}$. For the case $k=\kappa$, consider the square of entries $s$, $t$, $u$ and $v$. Let the columns to the left and right of this square be $(x,y)$ and $(e,f)$, respectively, as illustrated below.
	
	\begin{center}
		\begin{minipage}[H]{0.4\linewidth}
			\begin{ytableau}
				x& \none[\dots]&		s  & t & \none[\dots]& e
				\\
				y& \none[\dots]&		u & v & \none[\dots] & f \\
			\end{ytableau}
			\qquad\, $\longrightarrow$
		\end{minipage}
		\begin{minipage}[H]{0.2\linewidth}
			\begin{ytableau}
				x& \none[\dots]&	t  & s & \none[\dots]& e
				\\
				y& \none[\dots]&	u & v & \none[\dots] & f \\
			\end{ytableau}
		\end{minipage}
	\end{center}
	If $|e|=|f|$ and columns $(t,v)$ and $(e,f)$ have equal height, then by Definition \ref{Def2.2} (1) and the distinctness of $|s|$, $|t|$, $|u|$ and $|v|$, we obtain
	\begin{align*}
		\Q(s,e,u,f)=\Q(t,e,v,f)=\Q(t,e,u,f)=\Q(s,e,v,f)=\chi(e\in \mathbb{Z}_{+}).
	\end{align*}
	Otherwise, $|e|\ne|f|$ or column $(t,v)$ is longer than column $(e,f)$, then
	\begin{align*}
		\Q(s,e,u,f)&=\Q(s,u,f), \quad \Q(t,e,v,f)=\Q(t,v,f),\\
		\Q(t,e,u,f)&=\Q(t,u,f),\quad \Q(s,e,v,f)=\Q(s,v,f).
	\end{align*}
	Applying Lemma \ref{lem3.1} and \eqref{eqpflem3.2b} yields
	$\Q(s,u,f)+\Q(t,v,f)=\Q(t,u,f)+\Q(s,v,f)$, which implies $\Q(s,e,u,f)+\Q(t,e,v,f)=\Q(t,e,u,f)+\Q(s,e,v,f)$. A similar argument gives
	\begin{align*}
		\Q(x,s,y,u)+\Q(x,t,y,v)=\Q(x,t,y,u)+\Q(x,s,y,v).
	\end{align*}
    The remaining case $k=r$ follows by an identical argument, the details of which are omitted. 
	This completes the proof.
\end{proof}

\section{Reduced expressions of permutations}\label{S:reduced}
Let $S_n$ denote the symmetric group of permutations of $[n]=\{1,\ldots,n\}$.
Every permutation $w\in S_n$ can be expressed as a product of simple transpositions $s_i=(i\,i+1)$ for $1\le i<n$. An expression $\w=s_{i_k}\cdots s_{i_1}$ is called {\em reduced} if $k$ is the minimal number of simple transpositions required to represent $w$. The length $\ell(w)$ of a permutation $w$ is the number of factors in any of its reduced expressions; it satisfies $\ell(w)=\inv(w)$ where 
\begin{align*}
	\inv(w)=|\{(i,j): 1\le i<j\le n \,\textrm{ and }\, w(i)>w(j)\}|
\end{align*}
is the number of inversions of $w$.

Given a reduced expression $\w=s_{i_k}\cdots s_{i_1}$, a flip operator $\gamma\in \{\rho,\zeta,\delta,\xi\}$ from Section \ref{S:3} and an integer $r$, we define the flip operator for the reduced expression $\w$ by 
\begin{align}\label{E:wflip}
	\gamma_{{\bf w}}^r&=\gamma_{i_k}^r \,\cdots\,\gamma_{i_1}^r,
\end{align}
and we simply write $\gamma_{{\bf w}}=\gamma_{i_k}\,\cdots\,\gamma_{i_1}$.

Among the flip operators from Section \ref{S:3}, only $\delta_i^r$ satisfies the braid relation (see \cite[Lemma 8]{JL25}); that is,
\begin{align*}
	\delta_i^r\,\delta_{i+1}^r\,\delta_i^r(\sigma)=\delta_{i+1}^r\,\delta_{i}^r\, \delta_{i+1}^r(\sigma)
\end{align*}
holds for any filling $\sigma$ whose columns $i$, $i+1$ and $i+2$ have equal height. The other three flip operators $\gamma\in \{\rho,\zeta,\xi\}$ do not satisfy this relation. Consequently, for a permutation $w$, two different reduced expressions $\w$ and ${\bf v}$ may yield distinct flip operators, i.e., $\gamma_{\w}^r\ne \gamma_{{\bf v}}^r$. 

To obtain a well-defined pairing between a flip operator and a permutation, we must therefore select a specific reduced expression for each $w$, called a {\em positive distinguished subexpression} (PDS). This well-definedness is required for the proofs of (\ref{eqthm2.2}) and (\ref{eqlem2.1}). The technique of selecting the PDS for counting purpose appears in \cite{CHO22,O:23,O:24}.
 
\begin{definition}(Positive distinguished subexpression (PDS)) \cite{MR:04}
	The (strong) Bruhat order $(S_n,\le)$ is defined by its covering relation: $u$ covers $w$ if and only if $u=w(i\,j)$ and $\ell(u)=\ell(w)+1$ for some transposition $(i\,j)$.
	
	Now, let $v\le w$ in $S_n$, and fix a reduced expression $\w=s_{i_k} \cdots s_{i_1}$ for $w$. The positive distinguished subexpression (PDS) for $v$ in $\mathbf{w}$ is a reduced expression for $v$ obtained by removing some simple transpositions from $\mathbf{w}$. It is constructed inductively: set $v^0=v$, and for $1\le j\le k$, define
	\begin{align*}
		v^j=\begin{cases}
			v^{j-1} s_{i_j} & \text{ if\ } v^{j-1} s_{i_j}< v^{j-1}; \\
			v^{j-1}     & \text{ otherwise.}
		\end{cases}
	\end{align*}
	Let $v_j=s_{i_j}$ if $v^{j-1} s_{i_j}< v^{j-1}$ and $v_j=\mathbf{1}$ (the identity) otherwise. Then the sequence $\mathbf{v}=v_{k}\cdots v_{1}$ is the PDS for $v$ in $\w$.	
\end{definition}
\begin{example}
	For $n=3$, consider the permutations $w=321$ and $v=231$ in one-line notation. Fix the reduced expression $\w=s_1s_2s_1$ for $w$. Following the inductive construction, we compute the PDS for $v$ in $\w$: 
	\begin{itemize}
		\item $v^0=v=231$.
		\item $v^1=v^0=231$, since $v^{0}s_{1}=321>231=v^{0}$.
		\item $v^2=v^1s_2=213$, since $v^1s_2=213<v^1$.
		\item $v^3=v^2s_1=123$, since $v^2s_1=123<v^2$.
	\end{itemize}
    Thus, the corresponding factors are $v_1={\bf 1}$, $v_2=s_2$ and $v_3=s_1$, yielding the PDS for $v$ in $\w$ is ${\bf v}=s_1s_2$. 
\end{example}
\begin{definition}[Flip operator paired with a permutation]
Let $\w^*=(s_1)(s_2s_1)\cdots (s_{n-1}\cdots s_1)$ be a reduced expression of $w_0=n\cdots 21$ in one-line notation. For any permutation $w\in S_n$, let ${\bf w}=s_{i_k}\cdots s_{i_1}$ be the positive distinguished subexpression (PDS) for $w$ in $\w^*$, and set $\textrm{PDS}(n)=\{{\bf w}: w\in S_n\}$. Then, for each flip operator $\gamma\in \{\rho,\zeta,\xi\}$ defined in Section \ref{S:3} and for an integer $r$, we define the flip operator paired with the permutation $w$ by 
\begin{align}\label{E:wflip2}
	\gamma_{w}^r=\gamma_{{\bf w}}^r=\gamma_{i_k}^r \circ\cdots\circ\gamma_{i_1}^r.
\end{align}
Moreover, $\delta_w^r=\delta_{\w}^r$ for any reduced expression $\w$ of $w$.
\end{definition}

\section{Roadmap for the proof of Theorem \ref{T:main1}}\label{S:roadmap}
The proof of Theorem \ref{T:main1} is divided into three steps. In the first step, we establish a relationship between the two pairs $(\quadcoinv,\maj)$ and $(\quinv,\maj)$ on super fillings via the following theorem.
\begin{theorem}\label{thm11}
	For a partition $\lambda$, let $\M(\lambda)$ be the set of super fillings of the Young diagram $\dg(\lambda)$ of $\lambda$. Then, 
	\begin{align}\label{eq1.1}
		&\quad\,\sum_{\sigma\in\M(\lambda) \atop |\sigma| \, \textrm{non-attacking} } (-1)^{m(\sigma)} q^{\maj(\sigma)} t^{p(\sigma)+\quadcoinv(\sigma)}x^{|\sigma|}\notag\\
		&=\sum_{\sigma\in\M(\lambda) \atop |\sigma| \,\,  \quinv\textrm{-non-attacking} } (-1)^{m(\sigma)} q^{\maj(\sigma)} t^{p(\sigma)+\quinv(\sigma)}x^{|\sigma|}.
	\end{align}
\end{theorem}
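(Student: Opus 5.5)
The plan is to prove \eqref{eq1.1} in two stages, passing through an intermediate sum over $|\sigma|$ $\quinv$-non-attacking fillings weighted by $\quadcoinv$. Both sides can be grouped by the underlying positive filling $|\sigma|$ together with the sign pattern, and the flip operators act on adjacent columns of equal height. Since the two non-attacking conditions differ only by condition (III), which concerns columns of equal height, I would work rectangle by rectangle in the decomposition $\sigma=\sigma_{\lambda_1}\sqcup\cdots\sqcup\sigma_1$.

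\textbf{Step 1: reduce to the common positions of equal entries.} Fix a rectangle $\sigma_j$ with columns $i,i+1,\dots$. The quadruple coinversions and queue inversion triples that involve a column outside $\sigma_j$ are, by Definition~\ref{Def2.2}, exactly queue inversion triples, because the longer column sits on the left. By Lemmas~\ref{lem3.1} and~\ref{lem3.2}, these contributions are invariant under $\rho_i^r$ and $\xi_i^r$. So it suffices to compare the contributions internal to each rectangle, with the rest of the filling fixed and the multiset of (signed) entries in each row of $\sigma_j$ fixed. Concretely, I would fix the row multisets of $\sigma_j$ and show the following identity, where $\sigma$ runs over super fillings of $\sigma_j$ with prescribed row contents:
\[
\sum_{\sigma\ \text{non-att.}}(-1)^{m}q^{\maj}t^{p+\quadcoinv}=\sum_{\sigma\ \quinv\text{-non-att.}}(-1)^{m}q^{\maj}t^{p+\quinv}.
\]

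\textbf{Step 2: $\quadcoinv$ versus $\quinv$ on non-attacking fillings.} On non-attacking super fillings in a rectangle, the two statistics differ only through cases (1) and (2) of Definition~\ref{Def2.2}. I would use the $\xi$ and $\rho$ operators paired with permutations via PDS (Section~\ref{S:reduced}) to build, for each rectangle, bijections between orbits. These orbits come from flipping the columns of the rectangle in all ways, acting by $\xi_w$ on the one side and $\rho_w$ on the other. By Lemma~\ref{lem3.2} (respectively Lemma~\ref{lem3.1}), each application changes $\quadcoinv$ (respectively $\quinv$) by the local term $\Q(a,c,d)-\Q(a,d,c)$ determined by the top rows, and it preserves $\maj$ and the number of restricted boxes per row. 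Starting from a canonical representative, say one whose top row is sorted, the two orbit generating functions then coincide term by term.

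\textbf{Step 3: cancel the extra attacking fillings.} The right-hand side also contains fillings that are $\quinv$-non-attacking but violate (III), meaning $|\sigma(i,j)|=|\sigma(i-1,k)|$ with $k<j$ in the same rectangle. I would show these cancel by a sign-reversing involution: toggle the sign of the entry at the smallest such offending pair. The weights $(-1)^{m}t^{p+\quinv}q^{\maj}$ should then differ only by sign, since $I(a,a)$ depends on the sign, and this shift in $\quinv$ is compensated by the shift in $p$. This is the superization cancellation used in \eqref{E:supJ2}.

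I expect Step 3, together with making the orbit bijection in Step 2 compatible with it, to be the main obstacle. The sign toggle changes descents, so the involution must be chosen to preserve $\maj$. The first thing I would try is to toggle both signs simultaneously on the attacking pair $(i,j),(i-1,k)$ and check via $I(a,b)$ case analysis that $\maj$ and $p+\quinv$ shift consistently.
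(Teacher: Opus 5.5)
\textbf{There is a genuine gap: the decomposition behind Steps 2 and 3 is false.}

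You split the right-hand side into two parts. The first is the non-attacking fillings, to be matched with the left-hand side. The second is the $\quinv$-non-attacking fillings that violate (III), to be cancelled. This already fails for the $2\times 2$ square ($\lambda=(2,2)$) with top-row absolute values $\{2,3\}$ and bottom-row absolute values $\{1,2\}$.

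Take the fillings $\left(\begin{smallmatrix} c & a\\ b & d\end{smallmatrix}\right)$ with $|c|=3$, $|a|=|b|=2$, $|d|=1$. They are $\quinv$-non-attacking and violate (III). Both columns are unrestricted, so for every choice of signs $\maj=2$ and $\quinv=0$. The sixteen fillings therefore contribute $q^2(t-1)^4x_1x_2^2x_3\neq 0$, and no involution can cancel them. A single sign toggle changes $p$ with nothing to compensate it. Toggling both entries of the attacking pair leaves $(-1)^{m}$ unchanged, so it is not sign-reversing.

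Equivalently, the identity Step 2 aims at is false. On the non-attacking filling $\left(\begin{smallmatrix} c & a\\ d & b\end{smallmatrix}\right)$, case (1) of Definition~\ref{Def2.2} gives $\quadcoinv=\chi(a\in\mathbb{Z}_+)$, whereas $\quinv=1$. On $\left(\begin{smallmatrix} a & c\\ b & d\end{smallmatrix}\right)$ the two statistics agree. Hence the $\quadcoinv$- and $\quinv$-weighted sums over non-attacking fillings differ by exactly $q^2(t-1)^4x_1x_2^2x_3$.

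Step 2 has two further problems:
\begin{itemize}
\item A sorted top row is not a normal form in which $\quadcoinv=\quinv$. Case (1) applies whenever a restricted column lies to the right of an unrestricted one; the correct normal form puts the restricted columns first.
\item $\rho_i$ does not preserve non-attacking, or even $\quinv$-non-attacking, fillings. For example, $\rho_1$ sends the non-attacking $\left(\begin{smallmatrix} 3 & \bar 2\\ 1 & 2\end{smallmatrix}\right)$ to the $\quinv$-attacking $\left(\begin{smallmatrix} \bar 2 & 3\\ 1 & 2\end{smallmatrix}\right)$. So the $\rho$-orbits you compare do not lie in either index set.
\end{itemize}

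\textbf{The missing idea is that the cancellation has to cross the boundary between attacking and non-attacking fillings.} The paper works two consecutive rows at a time. It applies $\Omega$, a product of $\rho_i$'s that moves to the front each top-row entry that is attacking, or restricted with an unrestricted box to its left.
\begin{itemize}
\item If $\Omega(\sigma)$ is non-attacking ($\sigma$ separable), then restricted columns precede unrestricted ones, so $\quadcoinv=\quinv$ there. The map $\psi=\xi_w^{-1}\Omega$ is then a row-preserving bijection onto \emph{all} non-attacking fillings, carrying $(\maj,\quinv)$ to $(\maj,\quadcoinv)$.
\item Otherwise ($\sigma$ inseparable), flip the sign of the leftmost attacking top-row entry of $\Omega(\sigma)$, then apply $\Omega^{-1}$. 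The entry below it has a different absolute value, so $\maj$ is unchanged and $\quinv$ shifts by one against $p$.
\end{itemize}
In the example above, the (III)-violating fillings with $a\in\mathbb{Z}_-$ are separable and are sent to non-attacking fillings. Those with $a\in\mathbb{Z}_+$ cancel against the non-attacking fillings $\left(\begin{smallmatrix} c & \bar a\\ d & b\end{smallmatrix}\right)$.

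Your ingredients are the right ones: $\rho_i$, $\xi_i$, Lemmas~\ref{lem3.1}--\ref{lem3.2}, and a sign flip. They must be organised around this separable/inseparable split rather than around condition (III).
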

In the second step, we derive combinatorial formulas for the integral and symmetric Macdonald polynomials, $J_{\lambda}(X;q,t)$ and $P_{\lambda}(X;q,t)$ (Theorem \ref{thm1}), by applying Theorem \ref{thm11} and the superization formula (\ref{E:supJ2}) where $\vartheta=\quinv$.

Given a partition $\lambda$, let $m_i$ denote the multiplicity of part $i$ in the partition $\lambda$ for $i\ge 1$. Let $[m]_t=1+\cdots+t^{m-1}$ for $m\ge 1$ and $[0]_t=1$. Define $[m]_t!=[1]_t\cdots [m]_t$ for $m\ge 1$ and $[0]_t!=1$. 
\begin{theorem}\label{thm1}
	For a partition $\lambda$, let $\T(\lambda)$ be the set of positive fillings of the Young diagram $\dg(\lambda)$ of $\lambda$.
	Then,
	\begin{align}
		J_{\lambda}(X;q,t)
		&=\sum_{\tau\in \T(\lambda') \atop \tau \textrm{ non-attacking} } q^{\maj(\tau)} t^{\overline{\quadcoinv}(\tau)} x^{\tau}\prod_{\tau(u)\ne \tau(\South(u)) \atop \text{ or }u\,\in \,\text{row }1}(1-t)\notag\\
		&\qquad \qquad\qquad\times\prod_{\tau(u)=\tau(\South(u))\atop \text{ and }u\,\not\in \,\text{row }1} (1-q^{\leg(u)+1} t^{\widehat{\arm}(u)+1} ).\label{eqthm1.2}\\
		P_{\lambda}(X;q,t)&=\prod_{i\ge 1}([m_i]_t!)^{-1}\sum_{\sigma\in\T(\lambda') \atop \sigma\, \text{ non-attacking} }
		q^{\maj(\sigma)}t^{\overline{\quadcoinv}(\sigma)}c_{\sigma}(q,t)\,x^{\sigma}, \label{eqthm2.1}\\
		&=\sum_{\sigma}
		q^{\maj(\sigma)}t^{\overline{\quadcoinv}(\sigma)}c_{\sigma}(q,t)\,x^{\sigma},\label{eqthm2.2} 
	\end{align}
	which is summed over non-attacking and top-row increasing tableaux $\sigma$ of the Young diagram $\dg'(\lambda)$ of $\lambda$.
\end{theorem}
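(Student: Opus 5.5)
Starting from the superization formula \eqref{E:supJ2} with $\vartheta=\quinv$, I will use Theorem \ref{thm11} to replace the pair $(\quinv,\maj)$ by $(\quadcoinv,\maj)$ over super fillings whose absolute values are non-attacking. Since $t^{-p(\sigma)-\quinv(\sigma)}$ appears in \eqref{E:supJ2}, I first apply Theorem \ref{thm11} with $t\mapsto t^{-1}$. This gives
\[
J_\lambda=t^{n(\lambda)+n}\sum_{\sigma\in\M(\lambda'),\ |\sigma|\ \textrm{non-attacking}}(-1)^{m(\sigma)}q^{\maj(\sigma)}t^{-p(\sigma)-\quadcoinv(\sigma)}x^{|\sigma|}.
\]
Next I group super fillings by $\tau=|\sigma|$, a positive non-attacking filling. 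For fixed $\tau$, each box $u$ of $\sigma$ independently carries a sign. Because $\tau$ is non-attacking, the values in a row are distinct, and the only coincidence between a box and a neighbouring box is the vertical one, $\tau(u)=\tau(\South(u))$ (restricted boxes). The plan is to show that
\[
\sum_{|\sigma|=\tau}(-1)^{m(\sigma)}q^{\maj(\sigma)}t^{-p(\sigma)-\quadcoinv(\sigma)}
\]
factors as a product over boxes. An unrestricted box, or a box in row $1$, should contribute a factor $(t^{-1}-1)$ times a fixed monomial. A restricted box should contribute $(1-q^{\leg(u)+1}t^{-\widehat{\arm}(u)-1})$ times a monomial. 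The point is that the sign of $w$ only affects the $\Q$-value of the quadruples in Definition \ref{Def2.2}(1),(2), and whether $(u,\South(u))$ is a descent.

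The key step is this sign-by-sign analysis. Process boxes in a fixed reading order (top to bottom, right to left). For an unrestricted box $w$, flipping its sign changes neither $\maj$ nor any $\Q$-value, because all relevant absolute values are distinct and $I$ depends only on the absolute order when the values differ. The only change is in $p$, which produces $t^{-1}$ versus $-1$. For a restricted box $w$ with $|w|=|v|$ where $v=\South(w)$, a negative sign makes $(w,v)$ a descent, contributing $q^{\leg+1}$. Definition \ref{Def2.2}(1),(2) then toggles exactly the quadruples pairing $w$ with same-height columns to its left, and the queue-inversion triples involving $w$ with the row below to its right. The number of toggled quadruples should be exactly $\widehat{\arm}(w)$: the boxes $(i-1,k)$ with $k>j$, together with the unrestricted boxes $(i,k)$ with $k<j$ in columns of equal height. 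Case (2) of Definition \ref{Def2.2} is what handles the left unrestricted boxes. This yields the factor $t^{-1}-(-1)q^{\leg+1}t^{-\widehat{\arm}-1}\cdot(\cdots)$, and the global monomial should assemble to $t^{\overline{\quadcoinv}(\tau)}$ after multiplying by $t^{n(\lambda)+n}$. I expect the main obstacle to be this bookkeeping: checking that the all-positive configuration gives $\quadcoinv(\tau)$, and that $n(\lambda)+n$ together with the $t^{-1}$ factors converts $(t^{-1}-1)$ into $(1-t)$ and produces $\overline{\quadcoinv}=n(\lambda)-\quadcoinv$. This establishes \eqref{eqthm1.2}.

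For \eqref{eqthm2.1}, divide by $b_\lambda(q,t)$ using \eqref{E:JP}. First I will check that every $\tau$ has one $(1-t)$ per box in row $1$ and per unrestricted box. Then I will use Remark \ref{R:1}: as $u$ ranges over unrestricted boxes in a row of $\sigma_j$, the values $\overline{\arm}(u)$ form a fixed consecutive run. Hence the multiset of hook factors $1-q^{\leg(u)+1}t^{\overline{\arm}(u)+1}$ over unrestricted boxes, combined with the restricted factors $1-q^{\leg+1}t^{\widehat{\arm}+1}$ and the row-$1$ factors, matches $b_\lambda$ up to the factors $(1-t^{k})$ with $k\le m_i$ coming from equal columns. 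These residual factors give $\prod_i [m_i]_t!(1-t)^{m_i}$, which cancels the corresponding $(1-t)$'s and leaves $c_\sigma$ and $\prod_i([m_i]_t!)^{-1}$. Finally, \eqref{eqthm2.2} follows by symmetrizing within each block of equal-height columns. By Lemma \ref{lem3.2}, the operators $\xi_i$ preserve $\maj$ and non-attackingness, and they change $\overline{\quadcoinv}$ by exactly one unit at the top row. Using $\xi_w$ for $w$ ranging over PDS$(m_i)$ (Section \ref{S:reduced}), each orbit contains a unique top-row increasing representative, and the $t$-powers over the orbit sum to $[m_i]_t!$. The factor $c_\sigma$ is unchanged along the orbit by Remark \ref{R:1} and the invariance of the number of unrestricted boxes per row.
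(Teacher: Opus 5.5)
Your proposal follows the paper's proof essentially step for step: superization with $\vartheta=\quinv$ combined with Theorem \ref{thm11}, then a box-by-box sign analysis over all $\sigma$ with $|\sigma|=\tau$, then the factorization of $b_\lambda(q,t)$ into the top-box factors $\prod_i (t;t)_{m_i}$ times the hook factors matched by \eqref{E:b1}, and finally sorting top rows with $\xi_w$ (via Lemma \ref{lem3.2}) to extract $\prod_i[m_i]_t!$. Two slips need fixing: a negative entry in a restricted box $u$ removes exactly $\widehat{\arm}(u)$ quadruple coinversions, so the box contributes $t^{-1}-q^{\leg(u)+1}t^{\widehat{\arm}(u)}=t^{-1}\bigl(1-q^{\leg(u)+1}t^{\widehat{\arm}(u)+1}\bigr)$ rather than $1-q^{\leg(u)+1}t^{-\widehat{\arm}(u)-1}$; and the unrestricted equal-height columns to the left of $u$ are handled by case (1) of Definition \ref{Def2.2}, not case (2), which never occurs in a non-attacking filling because it is exactly an attack of type (III).
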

Equation \eqref{eqthm2.2} was first found by Corteel, Mandelshtam and Williams \cite[Theorem 5.9 and Remark 5.17]{CMW22}. Their proof utilizes a weighted generating function for multiline queues, the $q$-deformed Knizhnik--Zamolodchkov (qKZ) family introduced by Kasatani and Takeyama \cite{KT07} and its relation to Macdonald polynomials developed by Macdonald \cite{Mac94} (see also Cantini, de Gier and Wheeler \cite[Lemma 3]{CdGW:15}). 

In the final step, we demonstrate that for any non-attacking, top-row increasing filling $\sigma$ and for each $\eta\in \A^+$,
the statistic pairs $(\eta^{\circ},\maj)$ and $(\overline{\quadcoinv},\maj)$ are equidistributed over the row-equivalent class $[\sigma]$. This completes the proof of Theorem \ref{T:main1}.
\begin{theorem}\label{thm3}
	For a non-attacking and top-row increasing tableau $\sigma$ and for every $\eta\in\A^+$, there is a bijection $\varphi:[\sigma]\rightarrow [\sigma]$ such that for all $\tau\in [\sigma]$, 
	\begin{align}
		\label{eqthm3.1}(\maj,\eta^{\circ})(\varphi(\tau))&=(\maj,\overline{\quadcoinv})(\tau),
	\end{align}
	and $c_{\tau}(q,t)=c_{\varphi(\tau)}(q,t)$ where $c_{\tau}(q,t)$ is defined in (\ref{E:cqt}). 
\end{theorem}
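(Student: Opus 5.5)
Build $\varphi$ column-pair by column-pair from flip operators. Within each maximal rectangle $\sigma_j$, any $\tau\in[\sigma]$ is obtained from a fixed top-row increasing representative by permuting entries within rows. Columns of equal height are then permuted by the operators $\xi_w$ and $\delta_w$ of Section~\ref{S:reduced}.

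The guiding fact is Theorem~\ref{T:eta}: for positive fillings, $(\maj,\eta)$ and $(\maj,\quinv)$ are equidistributed on row-equivalence classes. In \cite{JL25} this is realized by a bijection that composes flip operators $\delta_w$ (for $\eta$) with $\rho_w$ (for $\quinv$) on each rectangle. For non-attacking fillings, Lemma~\ref{lem3.2} shows that $\xi_i^r$ plays, for $\quadcoinv$, the role $\rho_i^r$ plays for $\quinv$. It preserves $\maj$, preserves the non-attacking property, and preserves the number of restricted boxes in each row. It changes $\quadcoinv$ only by the local term $-\Q(a,d,c)+\Q(a,c,d)$.

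\textbf{Step 1: normal form.} For fixed $\sigma$, each $\tau\in[\sigma]$ that is non-attacking can be written uniquely as $\tau=\xi_{w}(\tau_0)$, with $w$ ranging over products of permutations of equal-height columns. Here $\tau_0$ runs over a set of normal forms in which each $\xi$-move is "ascending", and $w$ is read through the PDS so that $\xi_w$ is well defined. Iterating \eqref{E:tq1} along the PDS, $\overline{\quadcoinv}(\tau)$ equals $\overline{\quadcoinv}(\tau_0)$ shifted by a quantity depending only on $w$ and the row-local data. The same decomposition with $\delta_w$, using \cite[Lemma 8]{JL25}, expresses $\eta(\tau)$ as $\eta(\tau_0')$ shifted by a similar quantity.

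\textbf{Step 2: define and check $\varphi$.} Set $\varphi(\xi_w(\tau_0))=\delta_{w'}(\psi(\tau_0))$, where $\psi$ matches normal forms and $w'$ is a suitable twist of $w$, for example $w_0w$ or its reverse. The twist accounts for the complement $\bar\eta=n(\lambda)-\eta$. On normal forms, compare $\bar\eta$ with $\overline{\quadcoinv}$ quadruple by quadruple. The quadruples with repeated entries contribute to $\bar\eta$ but correspond to the restricted/attacking configurations excluded from $\quadcoinv$, and subtracting $\alpha(\sigma)$ in $\eta^{\circ}$ removes exactly these. For the statistics in $\A^+$ one checks this against Definition~\ref{Def2.2}.

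The condition $c_{\tau}=c_{\varphi(\tau)}$ follows from Remark~\ref{R:1}. The multiset $\{\overline{\arm}(u):\tau(u)\neq\tau(\South(u))\}$ depends only on how many unrestricted boxes each row of each $\tau_j$ has, and both $\xi$ and $\delta$ preserve these counts: Lemma~\ref{lem3.2} for $\xi$, and for $\delta$ because it swaps rows up to the first equal pair. Since $\leg(u)$ is constant along rows, $c_\tau$ is invariant.

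\textbf{Main obstacle.} The hard step is Step~1 with the matching of Step~2. Because $\xi$ fails the braid relation, the changes in $\overline{\quadcoinv}$ along different reduced words must be controlled through the PDS. I would first treat $\sigma$ with two equal columns, where everything reduces to Lemma~\ref{lem3.2} and an explicit check of $\eta^{\circ}$. I would then induct on the number of equal-height columns, using the distinguished expression $\w^*=(s_1)(s_2s_1)\cdots$ to add one column at a time. The invariance statement in Lemma~\ref{lem3.2}, concerning quadruples between columns $i,i+1$ and other columns, keeps the contributions of earlier columns fixed.
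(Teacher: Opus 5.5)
There is a genuine gap, and it sits in the step you defer to ``$\psi$ matches normal forms'' followed by a quadruple-by-quadruple comparison. On a non-attacking, top-row increasing filling, neither statistic ever counts a quadruple with a repeated entry. A quadruple over a restricted column, or a top-row pair $(0,0,u,v)$ with $u<v$, is always a quadruple coinversion (Definition~\ref{Def2.2}), and $\alpha$ removes exactly such quadruples from $\overline{\eta}$. The triples between columns of unequal height are counted identically on both sides. So subtracting $\alpha$ is only bookkeeping. The whole content of the theorem lies in the quadruples with four distinct entries: there $\eta^{\circ}$ counts those with $(z,w,u,v)\notin\S$, while $\overline{\quadcoinv}$ counts those for which $(z,u,v)$ is not a queue inversion triple.

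These two criteria genuinely differ, and no single normal form reconciles them quadruple by quadruple. Take $\S_1$. If the top row is increasing, two descent columns with $w>z>u>v$ form an $\S_1$-quadruple, although $(z,u,v)$ is not a queue inversion triple. If the top row is decreasing, a descent column followed by an ascent column with $z>v>u>w$ forms a quadruple coinversion that is not in $\S_1$. So your $\psi$ carries the entire theorem, and the proposal never constructs it.

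The missing input is the two-row analysis in Theorem~\ref{thm6.1}. After reducing to two consecutive rows, one sorts the top row of a two-row rectangle by $\xi_\pi$, increasingly or decreasingly according to which ordering lies in $\S$:
\begin{itemize}
\item $(z>w>u>v)$ or $(z>w>v>u)$ when all columns are descents;
\item $(u>v\ge z>w)$ or $(v>u\ge z>w)$ when no column is a descent;
\item $(z>u>v\ge w)$ or $(z>v>u\ge w)$ in the mixed case.
\end{itemize}
So the $w_0$-type twist is dictated by $\S$, not by the complement. For all-descent or all-non-descent columns sorted this way, $\S$-quadruples and quadruple coinversions coincide on distinct-entry quadruples. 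Then $\varphi=\delta_\pi^{-1}\circ\xi_\pi$ works:
\begin{itemize}
\item $\xi_\pi$ preserves $(\maj,\overline{\quadcoinv}^{\star})$ by Lemma~\ref{lem3.2};
\item $\delta_\pi^{-1}$ preserves $(\maj,\eta^{\circ})$ by Lemma~\ref{L:delta};
\item the composite restores the top row.
\end{itemize}
In the mixed case, descent and ascent columns are treated separately. The cross terms are controlled by \cite[Equation (5.16)]{JL25} and the exchange identity \eqref{E:Q11} from \cite[Proposition 14]{JL25}.

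Your parametrization $\tau=\xi_w(\tau_0)$ with $w$ free is also on the wrong set. That is the device behind \eqref{eqthm2.2}; here $\varphi$ must act on the non-attacking, top-row increasing fillings of $[\sigma]$ and fix the top row. On all non-attacking fillings of $[\sigma]$ the identity is false. There $\delta_i$ swaps whole columns and preserves $(\maj,\eta^{\circ})$, while $\overline{\quadcoinv}$ records top-row inversions. The two generating functions are therefore $\prod_i m_i!$ and $\prod_i[m_i]_t!$ times their (equal, by the theorem) restrictions to top-row increasing fillings, so they cannot agree once some $m_i\ge 2$. Finally, the failure of the braid relation for $\xi$ is not the real obstacle; the PDS only serves to make $\xi_\pi$ well defined.
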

Following the same line of argument, we can also prove an inversion analogue of Theorem \ref{T:main1} using the superization formulas by Haglund, Haiman and Loehr (see Proposition \ref{propquinvJ}). However, in this analogue (Theorem \ref{T:Ptin}), the polynomial factor $b_{\lambda}(q,t)$ defined in (\ref{E:bqt}) cannot be cancelled out.

\section{Proof of Theorem \ref{thm11}}\label{S:5}
This section is devoted to proving Theorem \ref{thm11}, which constitutes the first step in the proof of Theorem \ref{T:main1}. The strategy is to partition the set of all $\quinv$-non-attacking super fillings into two disjoint subsets. We then show that the first subset is in bijection with the set of non-attacking super fillings, while the second subset makes no contribution to the right-hand side of (\ref{eq1.1}). 

\begin{definition}[Separable $\quinv$-non-attacking super fillings]\label{Def:separable}
	Let $\lambda'=(n,n)$ and let $\sigma$ be a $\quinv$-non-attacking super filling of $\dg(\lambda')$. If $\sigma$ is also non-attacking and all restricted boxes lie to the left of all unrestricted boxes, then $\sigma$ is defined to be separable.
		
	Otherwise, there exists a pair of attacking boxes $u$ and $v$ satisfying condition (III) of Definition \ref{Def:non-attacking} with $|\sigma(u)|=|\sigma(v)|$, or there exists a restricted box $u$ to the right of an unrestricted box $v$ in the top row. 
	For such a super filling $\sigma$, we construct a new super filling $\Omega(\sigma)$ by applying a sequence of queue-inversion flip operators $\rho_i$.
	
	The super filling $\Omega(\sigma)$ is defined as follows: We start with the first (leftmost) box in the top row of $\sigma$ that is an attacking box or a restricted box with some unrestricted boxes to its left, 
	and move its entry to the first column by a sequence of queue-inversion operators $\rho_i$. We repeat this process for the second such box in the top row, moving its entry to the second column, and continue until all entries from these boxes in the top row have been moved to the front columns. 
	
	If the resulting filling $\Omega(\sigma)$ is non-attacking, we say $\sigma$ is separable; otherwise, $\sigma$ is inseparable. Equivalently, $\sigma$ is separable if and only if in $\Omega(\sigma)$, any two entries with the same absolute value lie within a single column.
	
	Let $\sigma\vert_{i}^{j}$ denote the segment of a super filling $\sigma$ from row $i$ through row $j$. For an arbitrary partition $\lambda$, consider a super filling $\sigma=\sigma_{\lambda_1} \sqcup \cdots \sqcup \sigma_1$ of the Young diagram $\dg'(\lambda)$. We say $\sigma$ is {separable} if, for every $\sigma_k$ and every pair of consecutive rows $j$, $j+1$ of $\sigma_k$, the two-row super filling $(\sigma_k)\vert_{j}^{j+1}$ is separable. Otherwise, $\sigma$ is {inseparable}.
	
\end{definition}

\begin{example}\label{Eg:separable}
    Consider the $\quinv$-non-attacking super filling $\sigma$ shown below. This super filling is separable because $\Omega(\sigma)=\rho_2\, \rho_3\, \rho_1\, \rho_2(\sigma)$ is non-attacking and all restricted boxes stand before unrestricted ones. Specifically,\\
	\begin{align*}
	    \sigma&=\begin{ytableau}[] 7&5& *(green)\overline{4} & *(green)\overline{3} \\ \overline{6}& *(green)4& *(green)3& \overline{1}
		\end{ytableau} \xrightarrow{\rho_2}
		\begin{ytableau}[] 7&*(green)\overline{4} &5& *(green)\overline{3} \\ \overline{6}& *(green)4& *(green)3& \overline{1}
		\end{ytableau}\xrightarrow{\rho_1}
		\begin{ytableau}[] *(green)\overline{4}&7& 5 & *(green)\overline{3} \\ *(green)4& \overline{6}& *(green)3& \overline{1}
		\end{ytableau} \\[5pt]
	    &\xrightarrow{\rho_3}
		\begin{ytableau}[] *(green)\overline{4} &7& *(green)\overline{3}& 5 \\  *(green)4&\overline{6}& *(green)3& \overline{1}
		\end{ytableau} 
	    \xrightarrow{\rho_2}
		\begin{ytableau}[] *(green)\overline{4} & *(green)\overline{3}& 7& 5 \\  *(green)4& *(green)3& \overline{6}&\overline{1}
		\end{ytableau}=\Omega(\sigma).
	\end{align*}
Furthermore, the filling $\tau$ below is inseparable: with $\Omega=\rho_1$, the resulting filling $\Omega(\tau)$ is attacking. All pairs of attacking boxes are highlighted in green.\\
	\vspace{-2mm}
	\begin{align*}
\tau=\begin{ytableau}[] 2& *(green)\overline{3}  \\  *(green)3& 5\end{ytableau} \xrightarrow{\rho_1} \begin{ytableau}[] *(green)\overline{3} &2 \\  5& *(green)3\end{ytableau}=\Omega(\tau).
\end{align*}	
\end{example}
Let $G(\lambda')$ and $B(\lambda')$ denote the sets of separable and inseparable $\quinv$-non-attacking super fillings of $\dg'(\lambda)$, respectively. Let  $N(\lambda')$ be the set of all non-attacking super fillings of $\dg'(\lambda)$. Theorem \ref{thm11} follows directly from the two equalities:
\begin{align}\label{eq5.1}
	&\quad\sum_{\sigma\in N(\lambda')  } (-1)^{m(\sigma)} q^{\maj(\sigma)} t^{p(\sigma)+\quadcoinv(\sigma)}x^{|\sigma|}\notag\\
	&=\sum_{\sigma\in G(\lambda')  } (-1)^{m(\sigma)} q^{\maj(\sigma)} t^{p(\sigma)+\quinv(\sigma)}x^{|\sigma|}.\\
     \label{eq5.2}
	&\quad\sum_{\sigma\in B(\lambda') } (-1)^{m(\sigma)} q^{\maj(\sigma)} t^{p(\sigma)+\quinv(\sigma)}x^{|\sigma|}=0.
\end{align}
We will prove \eqref{eq5.1} and \eqref{eq5.2} bijectively in Theorems \ref{thm5.1} and \ref{thm5.2} respectively.
\begin{theorem}\label{thm5.1}
	For a partition $\lambda$, there exists a bijection 
		$\psi: G(\lambda')\rightarrow N(\lambda')$ with the following properties for all $\sigma\in G(\lambda')$:
		\begin{enumerate}
			\item $\sigma\sim\psi(\sigma)$ (row equivalence),
			\item $(\maj,\quinv)\sigma=(\maj,\quadcoinv)\psi(\sigma)$,
			\item the top rows of $\psi(\sigma)$ and $\sigma$ are identical.
		\end{enumerate}
   Consequently, (\ref{eq5.1}) holds.
\end{theorem}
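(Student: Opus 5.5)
The bijection $\psi$ will be built rectangle by rectangle and row by row, from the bottom row upward. Statistics involving columns of different heights are the same for $\quinv$ and $\quadcoinv$, because for a longer column Definition \ref{Def2.2} reduces $\Q(z,w,u,v)$ to the queue inversion triple $(z,u,v)$. So it suffices to treat each maximal rectangle $\sigma_k$ separately, provided every operation we perform permutes entries within a row of $\sigma_k$ and preserves the quantities counted against other columns.

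The bottom row is handled by the identity. Then, by induction on $j$, rows $1,\dots,j$ of $\psi(\sigma)$ will be fixed, and row $j+1$ will be rearranged. Inside a rectangle, $\quinv$ and $\quadcoinv$ differ only on squares $\raisebox{-8pt}{\young(zw,uv)}$ where some absolute value coincides, that is, in cases (1) and (2) of Definition \ref{Def2.2}. These are exactly the restricted boxes, together with the case-(III) coincidences that the separability condition controls.

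For a separable two-row piece $(\sigma_k)\vert_{j}^{j+1}$, the first move is to apply $\Omega$, which is a product of queue-inversion flips $\rho_i$. By Lemma \ref{lem3.1}, $\Omega$ preserves $\maj$ and changes $\quinv$ only by local terms that can be tracked explicitly. The result is a non-attacking two-row filling in which all restricted columns stand to the left of all unrestricted columns.

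For a filling in this normal form, I would verify directly that $\quinv=\quadcoinv$. In a restricted column with $|w|=|v|$, case (1) of Definition \ref{Def2.2} contributes $\chi(w\in\mathbb{Z}_+)$. The matching queue inversion triples in $\quinv$ contribute the same amount, since $I(w,v)=0$ exactly when $w\in\mathbb{Z}_+$. The other squares agree by definition.

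To return to the original arrangement, I would undo the flips with the quadruple-coinversion operators $\xi_i$ instead of $\rho_i$. Concretely, I would apply $\xi$ along the same reduced word, which should be the PDS in $\w^*$ of the permutation realised by $\Omega$. By Lemma \ref{lem3.2}, each $\xi_i$ preserves $\maj$, non-attackingness and the counts of restricted boxes, and it changes $\quadcoinv$ by the same local amount $\Q(a,c,d)-\Q(a,d,c)$ that $\rho_i$ changes $\quinv$ by. Applying the $\rho$'s in reverse and the $\xi$'s forward therefore transports the equality $\quinv=\quadcoinv$ back. Taking the top row to be left untouched, by flipping from the top down with the operators $\xi_i^r$ and $\rho_i^r$ for $r$ below the top, gives property (3). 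Row equivalence, property (1), holds automatically.

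Bijectivity follows because every step is an involution, and a non-attacking filling reverses the construction: apply the $\xi$-normalisation, then the inverse $\rho$-word, and the image is separable by definition.

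The main obstacle is making this compatibility across rows precise. After row $j+1$ is modified, the square between rows $j+1$ and $j+2$ changes, and the interleaving of $\rho$ and $\xi$ must not break what was already fixed below. Lemma \ref{lem3.2} requires $\Q(a,b,c,d)=\Q(b,a,c,d)$ at the starting row, and the terminating row must coincide for the $\rho$ and $\xi$ flips. I would attack this by proving a local lemma: on non-attacking $2\times2$ squares with distinct absolute values, the terminating conditions of $\rho_i^r$ and $\xi_i^r$ agree, by \eqref{eqpflem3.2a}–\eqref{eqpflem3.2b}. The only discrepancies occur at restricted positions, and separability pushes those to the left where no flips pass.
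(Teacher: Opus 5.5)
Your two-row core is the paper's: $\psi=\xi_w^{-1}\rho_w$ with $\rho_w=\Omega$, the identity $\quinv=\quadcoinv$ on $\Omega(\sigma)$, and Lemmas \ref{lem3.1}--\ref{lem3.2} for the bookkeeping. But your account of how the two flip sequences interact is wrong, and the multi-row plan built on it fails.

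The induction ``rows $1,\dots,j$ fixed, row $j+1$ rearranged'' contradicts property (3), and it also contradicts your own two-row step. Both $\rho_i=\rho_i^{\lambda_i}$ and $\xi_i=\xi_i^{\lambda_i}$ always swap the top-row entries. So $\xi_w^{-1}\rho_w$ does not leave the upper row untouched; it restores it, because both act there by the same $s_i$. It is the lower row that ends up permuted.

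Nor can the $\rho$- and $\xi$-flips be made to terminate at the same rows. If each $\xi$-flip undid the matching $\rho$-flip, $\psi$ would be the identity, whereas $\sigma$ need not be non-attacking. In Example \ref{Eg:separable}, $\sigma$ has two type-(III) attacks and bottom row $\overline{6},4,3,\overline{1}$, while $\psi(\sigma)$ has bottom row $\overline{6},\overline{1},4,3$. Already the first flip $\xi_2$ on $\Omega(\sigma)$ carries the restricted column $(\overline{3},3)$ to the right, so restricted columns are not kept out of the flips' way. Your local lemma does hold for squares with four distinct absolute values. However, the squares where the two terminating conditions can differ involve restricted columns or type-(III) coincidences, which $\sigma$ may contain and which are exactly what $\psi$ must rearrange.

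The missing idea is that no row-by-row agreement is needed. By Lemmas \ref{lem3.1} and \ref{lem3.2}, $\rho_i^r$ and $\xi_i^r$ preserve $\maj$ and change $\quinv$, resp.\ $\quadcoinv$, only at the boundary between rows $r$ and $r+1$. When $r$ is the top row, this boundary term is the top row's self-contribution against the $0$'s. It depends only on the order of that row and is the same for both statistics. Since $\sigma$ and $\psi(\sigma)$ share their top row, the changes cancel; this is \eqref{eqprop5.2b}--\eqref{eqprop5.2g}.

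For more rows, the paper proceeds pair by pair from the bottom up, always keeping the upper row of the pair and permuting only rows beneath it. To make this precise, realign the already-processed lower block by $\xi$-flips started at its top row. These preserve non-attackingness, $\maj$ and every contribution below that row, as is done explicitly in the proof of Theorem \ref{thm5.2}. Then glue, using that non-attackingness and both statistics split over consecutive-row boundaries.

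Finally, your verification of $\quinv=\quadcoinv$ on $\Omega(\sigma)$ gives the wrong reason. There, case (1) of Definition \ref{Def2.2} cannot occur, since it needs a restricted column with an unrestricted one to its left, and case (2) is a type-(III) attack. So every quadruple reduces to $\Q(z,u,v)$. Where case (1) does occur, $\chi(w\in\Z_+)$ generally differs from $\Q(z,u,v)$; take $z=3$, $u=5$, $w=v=4$. The identity $\Q(w,v,x)=\chi(w\in\Z_+)$ you invoke concerns the restricted column on the left, where the two statistics agree anyway.
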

\begin{proof}
	We begin by establishing the bijection $\psi$ in the case that $\dg'(\lambda)$ contains at most two rows; we then prove that $\psi$ can be extended to an arbitrary diagram $\dg'(\lambda)$.
	
	If the diagram $\dg'(\lambda)$ has only one row, then by definition $\quadcoinv(\sigma)=\quinv(\sigma)$ and $\maj(\sigma)=0$. In this base case, we simply set $\psi(\sigma)=\sigma$. Now let $\sigma=\sigma_2\sqcup\sigma_1$ be a two-row  $\quinv$-non-attacking  separable filling. Then $\Omega(\sigma_2)$ is a non-attacking filling and $\rho_{w}(\sigma_2)=\Omega(\sigma_2)$ for some permutation $w$. Set $\Omega(\sigma)=\Omega(\sigma_2)\sqcup\sigma_1$, and
	define 
	\begin{align}\label{E:psi}
		\psi(\sigma)=\xi_{w}^{-1}\,\Omega(\sigma)=\xi_{w}^{-1}\,\Omega(\sigma_2)\sqcup \sigma_1=\xi_{w}^{-1}\rho_{w}(\sigma_2)\sqcup \sigma_1.
	\end{align}
	By Definition \ref{Def:separable}, any two entries sharing the same absolute value occupy the same column of $\Omega(\sigma)$, and such columns precede those in which the two entries are distinct. Together with the non-attacking property of $\Omega(\sigma)$, it follows from Definition \ref{Def2.2} that $(z,w,u,v)$ in $\Omega(\sigma)$ is a quadruple coinversion if and only if $(z,u,v)$ is a queue inversion triple. That is, $\quadcoinv(\Omega(\sigma))=\quinv(\Omega(\sigma))$. Equivalently, 
	\begin{align}
	\label{eqprop5.2b} \quadcoinv(\Omega(\sigma))-\quadcoinv(\Omega(\sigma)|_2^2)&=\quinv(\Omega(\sigma))-\quinv(\Omega(\sigma)|_2^2).
	\end{align}
	Lemma \ref{lem3.1} states that
	\begin{align}
		\label{eqprop5.2c}\maj(\sigma)&=\maj(\Omega(\sigma))\\
		\label{eqprop5.2d} \quinv(\sigma)-\quinv(\sigma|_2^2)&=\quinv(\Omega(\sigma))-\quinv(\Omega(\sigma)|_2^2).
	\end{align}
	Since $\psi(\sigma)=\xi_{w}^{-1}\,\Omega(\sigma)$, Lemma \ref{lem3.2} ensures that
	$\psi(\sigma)$ is also non-attacking, that $\sigma\sim \psi(\sigma)$, and that
	\begin{align}
		\label{eqprop5.2e}\maj(\Omega(\sigma))&=\maj(\psi(\sigma)), \\
		\label{eqprop5.2f} \quadcoinv(\Omega(\sigma))-\quadcoinv(\Omega(\sigma)|_2^2)&=\quadcoinv(\psi(\sigma))-\quadcoinv(\psi(\sigma)|_2^2).
	\end{align}
    Moreover, the construction of the operators $\rho_i$ and $\xi_i$ implies that the top rows of $\psi(\sigma)$ and $\sigma$ must coincide; that is,
	\begin{align} \label{eqprop5.2g}
		\quadcoinv(\psi(\sigma)|_2^2)=\quinv(\sigma|_2^2).
	\end{align}	
    Combining \eqref{eqprop5.2b}--\eqref{eqprop5.2g} yields $(\maj,\quinv)\sigma=(\maj,\quadcoinv)\psi(\sigma)$, which establishes Theorem \ref{thm5.1} for the case of two rows.

    The bijection $\psi:G(\lambda')\rightarrow N(\lambda')$ for an arbitrary partition $\lambda$ can be defined inductively from the bottom row upward: we apply $\psi$ from (\ref{E:psi}) successively to each pair of consecutive rows until the top two rows have been processed. 
    Since $\psi$ is a composition of the operators $\rho_i^r$ and $\xi_i^r$, and since $\psi$ for the two-row case preserves the top row, the bijection $\psi$ for an arbitrary diagram has properties (1) and (3). 
    Furthermore, because Lemmas \ref{lem3.1} and \ref{lem3.2} imply that the change of statistics occurs only between the starting row $r$ and row $r+1$ when the operators $\rho_i^r$ and $\xi_i^r$ are applied, the bijection $\psi$ also has property (2). This completes the proof. 
   
\end{proof}	    
   
	\begin{example}
		We continue with the separable super filling $\sigma$ from Example \ref{Eg:separable}. Since $\sigma$ is a two-row rectangular super filling and $\Omega=\rho_2\, \rho_3\, \rho_1\, \rho_2$, we have  $\psi(\sigma)=\xi_2\, \xi_1\, \xi_3\,\xi_2\,\Omega(\sigma)$ by (\ref{E:psi}). 
		\begin{align*}
			\Omega(\sigma)&=\begin{ytableau}[] *(green)\overline{4} & *(green)\overline{3}& 7& 5 \\  *(green)4& *(green)3& \overline{6}&\overline{1}
			\end{ytableau}\xrightarrow{\xi_2}
			\begin{ytableau}[] *(green)\overline{4} & 7&*(green)\overline{3} & 5 \\  *(green)4&  \overline{6}& *(green)3&\overline{1}
			\end{ytableau}\xrightarrow{\xi_3}
			\begin{ytableau}[]  *(green)\overline{4} & 7& 5& *(green)\overline{3} \\  *(green)4&  \overline{6} &\overline{1} & *(green)3
			\end{ytableau} \\
		  &\quad\xrightarrow{\xi_1}
			\begin{ytableau}[]  7 & *(green)\overline{4} & 5& *(green)\overline{3} \\    \overline{6}& *(green)4 &\overline{1} & *(green)3
			\end{ytableau} \xrightarrow{\xi_2}
			\begin{ytableau}[] 7 & 5& *(green)\overline{4} & *(green)\overline{3} \\  \overline{6} &\overline{1} & *(green)4& *(green)3
			\end{ytableau}=\psi(\sigma).
		\end{align*}
		Clearly, $\psi(\sigma)$ is a non-attacking filling whose top row coincides with that of $\sigma$. Moreover, $\quinv(\sigma)=\mathsf{quadcoinv}(\psi(\sigma))=0$ and $\maj(\sigma)=\maj(\psi(\sigma))=4$.
\end{example}
We now construct an involution $\varphi$ to prove \eqref{eq5.2} in Theorem \ref{thm5.2}. 
\begin{theorem}\label{thm5.2}
	For a partition $\lambda$, there is an involution
	$\varphi: B(\lambda')\rightarrow B(\lambda')$ with the following properties for
	$\sigma\in B(\lambda')$: 
	\begin{enumerate}
		\item $|\sigma|\sim |\varphi(\sigma)|$,
		\item $p(\sigma)-p(\varphi(\sigma))=\quinv(\varphi(\sigma))-\quinv(\sigma)=\pm 1$,
		\item $\maj(\sigma)=\maj(\varphi(\sigma))$,
		\item the top rows of $|\varphi(\sigma)|$ and $|\sigma|$ are identical.
	\end{enumerate}
Consequently, (\ref{eq5.2}) holds.
\end{theorem}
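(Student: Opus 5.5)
We plan a sign-reversing involution that is built locally. It acts on the first offending pair of rows and changes the sign (the negativity) of one entry in a pair of equal absolute value. This is the same mechanism as in the superization arguments of \cite{HHL04,AMM23}.

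\textbf{Locating the offending spot.} Let $\sigma=\sigma_{\lambda_1}\sqcup\cdots\sqcup\sigma_1\in B(\lambda')$. By Definition \ref{Def:separable}, there is some rectangle $\sigma_k$ and some pair of rows $j,j+1$ such that $(\sigma_k)|_j^{j+1}$ is inseparable. We fix a canonical choice:
\begin{itemize}
\item take the smallest such $j$ (scanning from the bottom), and among those the leftmost $\sigma_k$;
\item compute $\Omega$ of this two-row piece via the operators $\rho_i^{j+1}$ (a composition $\rho_w$ with $w$ taken as a PDS);
\item in $\Omega$, locate the first (leftmost) column containing a box $u$ in row $j+1$ whose entry has the same absolute value as an entry $v$ in row $j$ of a different column.
\end{itemize}
Since $\Omega$ is $\quinv$-non-attacking and is obtained by moving the attacking or restricted boxes to the front, the pair $(u,v)$ must be of type (III): $v$ lies in a column to the left of $u$ within the same rectangle, and $|\sigma(u)|=|\sigma(v)|$ with $u$ not restricted.

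\textbf{Defining $\varphi$.} Let $\tau=\rho_w(\sigma_k|_j^{j+1})$. We toggle the sign of the entry at $u$ in $\tau$: $a\leftrightarrow\bar a$. Then we set
\[
\varphi(\sigma)=\rho_w^{-1}(\tau')\ \text{on rows } j,j+1 \text{ of } \sigma_k,
\]
leaving every other part of $\sigma$ unchanged, where $\tau'$ is $\tau$ with the sign at $u$ toggled. Properties (1), (4) and the sign change in (2) follow immediately: absolute values are untouched, the top row is untouched, and exactly one entry changes sign, so $p$ changes by $\pm1$.

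\textbf{The key computation.} Because $|\sigma(u)|=|\sigma(v)|$ and $v$ sits in the row below, in a column to the left, the letters $a$ and $\bar a$ compare identically with every other entry of absolute value $\neq a$. So toggling alters only the $I$-values involving the pair $(u,v)$ and the box $\South(u)$.
\begin{itemize}
\item Since $u$ is unrestricted, $|\sigma(\South(u))|\ne a$, so the descent status at $u$ is unchanged. Hence $\maj$ is unchanged.
\item For $\quinv$, the triple $(\sigma(u),\sigma(\South(u)),\cdot)$ is unaffected. The only triples that change are those of the form $(z,v',\cdot)$ or $(\sigma(u),\cdot)$ where the comparison $I(\sigma(u),\sigma(v))$ enters. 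By the definition of $I$ on equal letters ($a=b$ negative gives $1$), exactly one of these triples flips, and it flips in the direction opposite to $p$.
\end{itemize}
Lemma \ref{lem3.1} then transports these statements back through $\rho_w^{-1}$: $\maj$ is preserved, and the $\quinv$ change is preserved apart from controlled terms. This requires that $\rho_w$ and $\rho_w^{-1}$ act identically on $\tau$ and $\tau'$. Hence the comparisons that determine the stopping rows $k$ in the definition of $\rho_i$ must not involve the toggled comparison.

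\textbf{Main obstacle.} The hardest part is showing that $\varphi$ is an involution that stays inside $B(\lambda')$. Concretely, three things must hold after the toggle:
\begin{itemize}
\item the same $j$, the same $\sigma_k$ and the same permutation $w$ are selected;
\item the flips $\rho_i$ take the same paths;
\item the quantities $\Q(\cdot)$ at the stopping rows agree.
\end{itemize}
Here $\Q$ is computed with the modified letter, and we would first try to prove agreement by a case analysis on whether the toggled entry appears in a stopping comparison, using that $I(a,x)=I(\bar a,x)$ whenever $|x|\ne a$. If this fails, the fallback is to toggle instead the entry at $v$ in the lower row, choosing between $u$ and $v$ so that the toggled box never participates in the terminating condition of Definition \ref{Def3.1}(1). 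Finally, summing $\varphi$-orbits pairwise cancels the weights $(-1)^{m}t^{p+\quinv}$, which gives \eqref{eq5.2}.
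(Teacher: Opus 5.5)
\paragraph{There is a genuine gap: your plan needs the inverse flips to follow the same path after the toggle, and that is impossible.}
If $\rho_w^{-1}$ acted on $\tau'$ along the same path as on $\tau$, then $\varphi(\sigma)$ would just be $\sigma$ with the sign of one entry changed, with $|\varphi(\sigma)|=|\sigma|$. No such in-place toggle is weight-preserving.

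Let $s$ be the box of $\sigma$ holding the toggled entry $a$. Since $I(x,\bar y)=I(x,y)$ for all $x,y$, changing the sign at $s$ can only affect comparisons $I(a,y)$ in which $a$ is the first argument. That means $y=\sigma(\South(s))$, or $y$ in the row below to the right of $\South(s)$, or $y$ in the row of $s$. Because $|\sigma|$ is $\quinv$-non-attacking, only $y=\sigma(\South(s))$ can have $|y|=|a|$. So there are two cases:
\begin{itemize}
\item $s$ is restricted: the descent at $s$ toggles and $\maj$ changes by $\leg(s)+1$.
\item $s$ is unrestricted: $p$ changes but no queue inversion triple changes.
\end{itemize}
Either way the weight is not preserved. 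The same reasoning kills your fallback of toggling $v$: its partner lies in the row above, so it is never the second argument of a comparison with $\sigma(v)$ as the first.

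This also shows your ``key computation'' contradicts your own geometry. A pair in position (III) never lies in a common queue inversion triple, so $I(\sigma(u),\sigma(v))$ never enters $\quinv$. Moreover, $\Omega$ of an inseparable piece is not $\quinv$-non-attacking. As in Figure~\ref{F:ab}, it places $a$ top-left above some $d$, with its partner $b$ to the right of $d$ in the lower row. That is a $\quinv$-attacking position of type (II), not type (III).

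\paragraph{The missing idea is that the toggle must change the path of the inverse flips.}
In the configuration $\bigl(\begin{smallmatrix} a & c\\ d & b\end{smallmatrix}\bigr)$ of $\Omega(\sigma_2)$, changing $a$ to $\bar a$ turns $\Q(a,d,b)=0$ into $\Q(\bar a,d,b)=1$. This is exactly the stopping comparison of the flip $\rho_i$ that produced this configuration:
\begin{itemize}
\item before the toggle, $\Q(a,d,b)\neq\Q(c,d,b)$, so $\rho_i$ swaps both rows;
\item after the toggle, $\Q(\bar a,d,b)=\Q(c,d,b)$, so $\rho_i$ swaps only the top row.
\end{itemize}
Hence $\varphi$ exchanges $\bigl(\begin{smallmatrix} c & a\\ b & d\end{smallmatrix}\bigr)$ and $\bigl(\begin{smallmatrix} c & \bar a\\ d & b\end{smallmatrix}\bigr)$, which are subcases (i) and (ii). This switch is what makes the net change of $\quinv$ exactly $\pm1$ rather than $0$, and what makes $\varphi$ an involution.

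The cost is that the lower row of the two-row piece gets permuted. That is why the theorem claims only $|\sigma|\sim|\varphi(\sigma)|$ together with equality of top rows. It is also why you cannot leave the rest of $\sigma$ unchanged: the rows below the offending pair must be rearranged so that their top row matches the new row $r$, without changing $(\maj,\quinv)$ there. The paper does this by replacing the separable part $\sigma_i|_1^r$ with $\psi^{-1}\xi_w\psi(\sigma_i|_1^r)$, using Theorem~\ref{thm5.1} and Lemma~\ref{lem3.2}. Your proposal contains neither the path-switching mechanism nor this repair of the lower rows.
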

\begin{proof}
As in the proof of Theorem \ref{thm5.1}, we first construct the involution $\varphi: B(\lambda')\rightarrow B(\lambda')$ for the case where $\lambda'$ has two parts, and then extend it to act on super fillings of an arbitrary diagram $\dg'(\lambda)$.

Since a single-row filling is trivially separable, any partition $\lambda'$ for which $B(\lambda')\ne \varnothing$ must contain at least two parts. Suppose
$\sigma=\sigma_2\sqcup\sigma_1$ is inseparable. Then $\sigma_2$ must be inseparable.
Define $\varphi(\sigma_2)$ via the following procedure: Apply $\Omega$ to $\sigma_2$. Locate the leftmost box $u$ in the top row of $\Omega(\sigma_2)$ that is part of an attacking pair of $\Omega(\sigma_2)$.  Change the sign of the entry $\Omega(\sigma_2)(u)$.  Apply the inverse map $\Omega^{-1}$ to the resulting filling. The outcome is defined to be $\varphi(\sigma_2)$. We then set $\varphi(\sigma)=\varphi(\sigma_2)\sqcup \sigma_1$.

It is evident that $\varphi$ has property $(1)$. Since the top-row entries of $\sigma$ are distinct, changing of the sign does not affect the flipping behavior of $\rho_i^r$. Thus, $\varphi$ satisfies condition $(4)$. We will show that $\varphi$ is an involution with properties $(2)$ and $(3)$. Let $v$ be any box in the bottom row that attacks $u$ in $\Omega(\sigma_2)$. Let $a=\sigma(u)$ and $b=\sigma(v)$, then $|a|=|b|$.
The process of moving entry $a$ to create
$\Omega(\sigma_2)$ results in one of the two configurations (i) or (ii) depicted in Figure \ref{F:ab}. Without loss of generality, we assume $a\in \mathbb{Z}_{+}$.
\begin{figure}
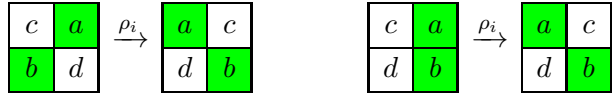

	\centering
	\begin{align*}
		\begin{ytableau}[] c& *(green) a \\ *(green) b & d
		\end{ytableau} \,\xrightarrow{\rho_i}\,
		\begin{ytableau}[] *(green) a & c \\ d& *(green) b
		\end{ytableau}
		\qquad \qquad
		\begin{ytableau}[] c& *(green) a  \\ d & *(green) b
		\end{ytableau}\, \xrightarrow{\rho_i}\,
		\begin{ytableau}[] *(green) a & c \\ d& *(green) b
		\end{ytableau}
	\end{align*}
	\caption{Subcases (i) and (ii) for inseparable super fillings.}\label{F:ab}
\end{figure}
For subcase (i), since $\rho_i$ swaps $b$ and $d$, we have $\Q(c,b,d)\ne\Q(a,b,d)$ by definition. We note that $\Q(a,b,d)=1$ because $I(a,b)=0$ and $I(d,b)\ne I(a,d)$. Consequently, $\Q(a,d,b)=0$ and $\Q(c,d,b)=1$. Now, consider changing the entry $a$ to its negative $\overline{a}$. Since $|a|\ne|d|$, the sign change leaves the major index of the filling invariant. We now discuss the change on the number of queue inversion triples.

Because $I(\overline{a},b)=1$, we obtain $\Q(\overline{a},d,b)=1$. For any entry $e$ with $|e|\ne |a|$, we find $\Q(a,d,e)=\Q(\overline{a},d,e)$. It follows that the number of queue inversion triples between these two rows increases by exactly one. Since all entries in the top row are distinct, the number of queue inversions induced solely by the top row remains unchanged. Finally, applying $\rho_i$ swaps only the entries $\bar{a}$ and $c$, as the condition $\Q(\overline{a},d,b)=\Q(c,d,b)=1$ is satisfied (see the diagram below).
\begin{align*}
	\sigma_2\,\rightarrow\,\begin{ytableau}[] c& *(green) a  \\ *(green) b & d
	\end{ytableau} \,\xrightarrow{\rho_i}\,
	\begin{ytableau}[] *(green) a & c \\ d& *(green) b
	\end{ytableau}\,\rightarrow\,\Omega(\sigma_2)\,\rightarrow\,
	\begin{ytableau}[] *(green) \overline{a} & c \\ d& *(green) b
	\end{ytableau}\,\xrightarrow{\rho_i}\,
	\begin{ytableau}[] c&  *(green) \overline{a}  \\ d& *(green) b
	\end{ytableau}\,\rightarrow\,\varphi(\sigma_2)
\end{align*}
Lemma \ref{lem3.1} states that the inverse map $\Omega^{-1}$, being a product of queue-inversion flip operators, preserves both the major index and the number of queue inversion triples between these two rows. Therefore,
\begin{align*}
	\quinv(\sigma)&=\quinv(\varphi(\sigma))-1,\\
	\maj(\sigma)&=\maj(\varphi(\sigma)).
\end{align*}
The resulting filling $\varphi(\sigma_2)$ remains inseparable, because the reverse construction (illustrated below) confirms that $\varphi(\sigma_2)$ falls into subcase (ii); namely, the entries $\overline{a}$ and $b$ lie in the same column. Furthermore, the top rows of the positive fillings $|\sigma_2|$ and $|\varphi(\sigma_2)|$ are identical.
\begin{align*}
	\varphi(\sigma_2)\,\rightarrow\,\begin{ytableau}[] c&  *(green) \overline{a}  \\ d& *(green) b
	\end{ytableau}\,\xrightarrow{\rho_i}\,
	\begin{ytableau}[] *(green) \overline{a} & c \\ d& *(green) b
	\end{ytableau}\,\rightarrow\,\Omega(\varphi(\sigma_2))\,\rightarrow\,
	\begin{ytableau}[] *(green) a & c \\ d& *(green) b
	\end{ytableau}\,\xrightarrow{\rho_i}\,
	\begin{ytableau}[] c& *(green) a  \\ *(green) b & d
	\end{ytableau}\,\rightarrow\,\sigma_2
\end{align*}
Thus, $\varphi$ is a sign-reversing and weight-preserving involution on the set of two-row inseparable super fillings satisfying conditions $(1)$--$(4)$.

For $\sigma=\sigma_{\lambda_1} \sqcup \cdots \sqcup \sigma_1\in B(\lambda')$. Find the largest index $i$ for which $\sigma_i$ is inseparable; and choose the smallest index $r$ for which $\sigma_i|_r^{r+1}$ is inseparable. This implies that $\sigma_i|_1^{r}$ is separable. By Theorem \ref{thm5.1} and the involution $\varphi$ for two-row super fillings, the top row of $\psi(\sigma_i|_1^{r})$ and the bottom row of  $\varphi(\sigma_i\vert_{r}^{r+1})$ are row-equivalent. That is, one can be transformed into the other by a permutation. Let $w$ be a permutation such that the top row of $\xi_{w}\psi(\sigma_i|_1^{r})$ matches the bottom row of $\varphi(\sigma_i\vert_{r}^{r+1})$. We then define $\varphi(\sigma_i)$ to be the super filling satisfying
\begin{align*}
	\varphi(\sigma_i)\vert_{1}^{r}=\psi^{-1}\,\xi_{w}\,\psi(\sigma_i|_1^{r}),\quad
	\varphi(\sigma_i)\vert_{r}^{r+1}=\varphi(\sigma_i\vert_{r}^{r+1})\quad \mbox{ and }\quad
	\varphi(\sigma_i)\vert_{r+2}^{i}=\sigma_i\vert_{r+2}^{i}.
\end{align*}
Finally, we define
\begin{align*}
	\varphi(\sigma)=\sigma_{\lambda_1} \sqcup \cdots \sqcup \sigma_{i+1}\sqcup \varphi(\sigma_i)\sqcup \sigma_{i-1}\sqcup \cdots \sqcup \sigma_1.
\end{align*}
By construction, the involution $\varphi$ clearly has properties $(1)$ and $(4)$ for an arbitrary partition $\lambda$. It remains to verify conditions $(2)$ and $(3)$. By Lemma  \ref{lem3.2} and Theorem \ref{thm5.1}, we have
\begin{align*}
	(\maj,\quinv)(\varphi(\sigma_i)\vert_{1}^{r})&=(\maj,\quinv)(\sigma_i|_1^{r}), \\
	(\maj,\quinv)(\varphi(\sigma_i)\vert_{r+2}^{i})&=(\maj,\quinv)(\sigma_i\vert_{r+2}^{i}).
\end{align*}
Moreover, if $|x|\ne |y|$ and $|a|\ne |y|$, then $\Q(x,a,y)=\Q(x,\overline{a},y)$ and $\Q(x,y,a)=\Q(x,y,\overline{a})$. That is, changing the sign of $a$ does not affect the number of queue inversion triples above row $r$. Therefore, the only change of statistics $(\maj,\quinv)$ occurs between rows $r$ and $r+1$. By the previous argument on two-row fillings, $\varphi$ fulfills conditions $(2)$ and $(3)$, as desired.
\end{proof}

\begin{example}\label{example5.2}
 Let $\lambda'=(5,5)$ and let $\sigma$ be the inseparable $\quinv$-non-attacking super filling of $\dg(\lambda')$ shown below. According to Theorem \ref{thm5.2}, its image $\varphi(\sigma)$ is constructed as follows.\\
	\begin{align*}
		\sigma&=\,\begin{ytableau}[] 8& *(green)6 & 4 & \overline{2} & *(green)3 \\ *(green)\overline{6}& 7& *(green)3& 5& 1
		\end{ytableau} \xrightarrow{\rho_1}
		\begin{ytableau}[] *(green)6& 8 & 4 & \overline{2} & *(green)3 \\ *(green)\overline{6}& 7& *(green)3& 5& 1
		\end{ytableau}\xrightarrow{\rho_4}
		\begin{ytableau}[] *(green)6& 8 & 4 & *(green)3& \overline{2}  \\ *(green)\overline{6}& 7& *(green)3& 5& 1
		\end{ytableau} \\[5pt]
	&\,\,\xrightarrow{\rho_3}
		\begin{ytableau}[]  *(green)6& 8 & *(green)3 & 4& \overline{2}  \\ *(green)\overline{6}& 7& 5& *(green)3& 1
		\end{ytableau} 
		\rightarrow\begin{ytableau}[]  *(green)6& 8 & *(green)\overline{3} & 4& \overline{2}  \\ *(green)\overline{6}& 7& 5& *(green)3& 1
		\end{ytableau}\xrightarrow{\rho_3}
		\begin{ytableau}[]  *(green)6& 8 & 4& *(green)\overline{3} & \overline{2}  \\ *(green)\overline{6}& 7& 5& *(green)3& 1
		\end{ytableau}\\[5pt]
	&\,\,\xrightarrow{\rho_4}
		\begin{ytableau}[]  *(green)6& 8 & 4& \overline{2} & *(green)\overline{3}  \\ *(green)\overline{6}& 7& 5& 1& *(green)3
		\end{ytableau}\xrightarrow{\rho_1}
		\begin{ytableau}[]  8& *(green)6 & 4& \overline{2} & *(green)\overline{3}  \\ *(green)\overline{6}& 7& 5& 1& *(green)3
		\end{ytableau}\,=\varphi(\sigma).
	\end{align*}
\end{example}

\section{Proof of Theorem \ref{thm1}}\label{S:4}
The purpose of this section is to prove combinatorial formulas (\ref{eqthm1.2}) -- (\ref{eqthm2.2}) for symmetric Macdonald polynomials and integral Macdonald polynomials, as the second step towards the proof of Theorem \ref{T:main1}.

We prove (\ref{eqthm1.2}) by compressing the weighted contributions of super fillings into those of positive fillings. This follows the superization--compression approach used in \cite[Proposition 8.1]{HHL04} and \cite[Theorem 5.3]{O:23}.

{\em Proof of (\ref{eqthm1.2})}.
Starting from (\ref{E:supJ2}) and (\ref{eq1.1}), we obtain
\begin{align}
	J_{\lambda}(X;q,t)
	&=t^{n(\lambda)+n} \sum_{\sigma\in\M(\lambda') \atop |\sigma| \ \textrm{non-attacking} } (-1)^{m(\sigma)} q^{\maj(\sigma)}
	t^{-p(\sigma)-\quadcoinv(\sigma)}x^{|\sigma|} \label{eqthm1.1}
\end{align}
Now, fix a positive non-attacking filling $\tau\in \T(\lambda')$. We analyze the total contribution to the right-hand side of (\ref{eqthm1.1}) from all non-attacking super fillings $\sigma\in \M(\lambda')$ satisfying $\tau=|\sigma|$.

Consider a restricted box $s$ of $\tau$, i.e., one satisfying $\tau(s)=\tau(\South(s))$. Let $u=\sigma(s)$ and $v=\sigma(\South(s))$. If $u=\tau(s)\in\Z_+$, then
$I(u,v)=I(u,\tau(\South(s)))=0$. This implies that the positive entry $\sigma(s)$  contributes zero to the difference $\maj(\sigma)-\maj(\tau)$. Furthermore, by Definition \ref{Def2.2} and the non-attacking property of $\tau$, the positive entry $u$ also contributes nothing to the difference $\quadcoinv(\sigma)-\quadcoinv(\tau)$.

If $u=\overline{\tau(s)}\in\Z_-$, then $I(u,v)=1$ and $I(\bar{u},\tau(\South(s)))=0$. Since $I(a,b)=I(a,\overline{b})$ for all $a,b\in\A$, we conclude that box $s$ contributes $\leg(s)+1$ to the difference $\maj(\sigma)-\maj(\tau)$. We now examine potential quadruples that include the entry $u$. Consider a quadruple in which box $s$ occupies either the upper-left or upper-right corner (as illustrated below). In such a configuration, the columns containing the pairs $(a,b)$ and $(u,v)$ have equal height. (The entry $c$ may be absent in some cases.)
\begin{center}
	\begin{ytableau}
		a& \none[\dots]&		*(green)u& \none[\dots]& c
		\\
		b& \none[\dots]&		v &  \none[\dots] & d \\
	\end{ytableau}
\end{center}
According to Definition \ref{Def2.2}, if the columns of $(u,v)$ and $(c,d)$ have equal height, then $\Q(u,c,v,d)=\Q(u,v,d)$. Otherwise, the column of $(u,v)$ is longer than the column of $(c,d)$, then $\Q(u,c,v,d)=\Q(u,v,d)$ as well. Now, since $u\in\Z_-$, we have $I(u,v)=1$ while $I(|u|,v)=0$. Furthermore, because $|d|\ne |u|$, it follows that $I(d,v)\ne I(u,d)$. Consequently, the triple $(u,v,d)$ is not a queue inversion triple, while the triple $(|u|,v,d)$ is a queue inversion triple.

Consider the quadruple $(a,u,b,v)$ in the non-attacking filling $\sigma$. By definition, $|a|\ne |u|$ and $|u| \ne |b|$.
If $|a|\ne|b|$, then $(a,u,b,v)$ is not a quadruple coinversion, while $(a,|u|,b,v)$ is  a quadruple coinversion. If $|a|=|b|$, then $(a,u,b,v)$ is a quadruple coinversion if and only if $(a,b,v)$ is  a queue inversion triple, a condition that is independent of $u$. In both cases, the contribution of the entry $u$ (compare to $|u|$) to the difference $\quadcoinv(\sigma)-\quadcoinv(\tau)$ is $-\widehat{\arm}(s)$. 

Now suppose box $s$ occupies the lower-left or lower-right corner of the quadruple square (as illustrated below). We claim that in these configurations, the negative entry $u$ contributes zero to the difference $\quadcoinv(\sigma)-\quadcoinv(\tau)$. 
\begin{center}
	\begin{ytableau}
		e& \none[\dots]&		z & \none[\dots]& w
		\\
		f& \none[\dots]&		*(green)u &  \none[\dots] & y \\
	\end{ytableau}
\end{center}
To establish the claim, it suffices to prove that 
\begin{align}
	\Q(e,z,f,u)&=\Q(e,z,f,|u|), \quad \Q(e,f,u)=\Q(e,f,|u|), \label{E:Q1a}\\
	\Q(z,w,u,y)&=\Q(z,w,|u|,y),\quad \Q(z,u,y)=\Q(z,|u|,y)\label{E:Q2a}.
\end{align}
We proceed case by case, leveraging the non-attacking property of $\tau$.

First, we prove (\ref{E:Q1a}). Suppose $|z|=|u|, |e|\ne|f|$, and the columns of $(z,u)$ and $(e,f)$ have equal height. Then $(e,z,f,u)$ is a  quadruple coinversion if and only if $z\in\Z_+$, a condition independent of $u$. Hence, $\Q(e,z,f,u)=\Q(e,z,f,|u|)$.
Otherwise, $\Q(e,z,f,u)=\Q(e,f,u)$ and $\Q(e,z,f,|u|)=\Q(e,f,|u|)$.  Since $\tau$ is non-attacking, we have $|e|\ne |u|$ and $|f|\ne|u|$, which implies $I(u,f)=I(|u|,f)$ and $I(e,u)=I(e,|u|)$. Consequently, $\Q(e,f,u)=\Q(e,f,|u|)$. This proves (\ref{E:Q1a}).

Second, we verify (\ref{E:Q2a}). Suppose $|w|=|y|, |z|\ne|u|$, and the columns containing $(z,u)$ and $(w,y)$ have equal height. Then $(z,w,u,y)$ is a quadruple coinversion if and only if $w\in\Z_+$, so $\Q(z,w,u,y)=\Q(z,w,|u|,y)$. Otherwise, we have $\Q(z,w,u,y)=\Q(z,u,y)$ and $\Q(z,w,|u|,y)=\Q(z,|u|,y)$. The non-attacking property of $\tau$ yields $|y|\ne |u|$ and $|y|\ne|z|$, from which it follows that $I(z,u)=I(z,\overline{u})$ and $I(y,u)=I(y,\overline{u})$. Therefore, $\Q(z,u,y)=\Q(z,|u|,y)$.

Summing the contributions from subcases (distinguishing whether the entry $u$ in the restricted box $s$ is positive or negative), we find that the total contribution of all super fillings $\sigma$ (with $|\sigma|=\tau$) to the right-hand side of (\ref{eqthm1.1}) for every restricted box $s$ is
\begin{align*}
	q^{\maj(\tau)}t^{-\quadcoinv(\tau)}x^{\tau}(t^{-1}-q^{\leg(s)+1} t^{\widehat{\arm}(s)}).
\end{align*}
Now consider an unrestricted box $s$ of $\tau$, satisfying $\tau(s)\ne\tau(\South(s))$. This includes the case where $s$ lies in the bottom row. Since $\tau(s)\ne\tau(\South(s))$, the relation $I(\sigma(s),\sigma(\South(s)))=I(\tau(s), \tau(\South(s)))$ holds for any choice of $\sigma(s)\in \A$. Hence, box $s$ contributes nothing to the difference $\maj(\sigma)-\maj(\tau)$. A similar analysis of all quadruples containing $\sigma(s)$ shows that it also contributes zero to $\quadcoinv(\sigma)-\quadcoinv(\tau)$. Therefore, the total contribution from all super fillings of $\sigma$ (with $|\sigma|=\tau$) to the right-hand side of (\ref{eqthm1.1}) for each unrestricted box $s$ is
\begin{align*}
	q^{\maj(\tau)}t^{-\quadcoinv(\tau)}x^{\tau}(t^{-1}-1).
\end{align*}
Combining the contributions from all restricted and unrestricted boxes, (\ref{eqthm1.1}) simplifies to
\begin{align*}
	J_{\lambda}(X;q,t) 
	&=t^{n(\lambda)+n} \sum_{\tau\in \T(\lambda') \atop  \tau\ \textrm{non-attacking} } q^{\maj(\tau)} t^{-\quadcoinv(\tau)} x^{\tau}\\
	&\qquad\qquad\times\prod_{\tau(s)\ne \tau(\South(s))}(t^{-1}-1)
	\prod_{\tau(s)=\tau(\South(s))} (t^{-1}-q^{\leg(s)+1} t^{\widehat{\arm}(s)}),
\end{align*}
which is precisely \eqref{eqthm1.2}. This completes the proof.
\qed

{\em Proof of (\ref{eqthm2.1})}. We begin by reformulating the polynomial $b_{\lambda}(q,t)$ from (\ref{E:bqt}). An equivalent product expression is
\begin{align}\label{E:bla2}
	b_{\lambda}(q,t)=\prod_{s\in \dg'(\lambda)}(1-q^{\leg(s)}t^{\arm(s)+1}).
\end{align}
For any integer $n\ge 1$, define the $t$-factorial bracket $(t;t)_{n}=\prod_{1\le i\le n}(1-t^i)=(1-t)^n [n]_t!$,  with $(t;t)_0=1$. Observe that $\leg(s)=0$ for every topmost box $s$ in a column of $\dg'(\lambda)$. These boxes collectively contribute the factor 
\begin{align*}
	\prod_{i\ge 1}(t;t)_{m_i}=(1-t)^{\ell(\lambda)} \prod_{i\ge 1}[m_i]_t!
\end{align*}
to $b_{\lambda}(q,t)$, where $m_i$ is the multiplicity of part $i$ in $\lambda$. Therefore, we obtain the factorization
\begin{align}\label{eqJ.2}
	b_{\lambda}(q,t)=(1-t)^{\ell(\lambda)} \prod_{i\ge 1}[m_i]_t!\prod_{s\in\dg'(\lambda) \atop s \not\in\, \text{row }1}(1-q^{1+\mathsf{leg}(s)}t^{1+\mathsf{arm}(\South(s))}).
\end{align}

For any filling $\sigma\in \T(\lambda')$, we prove that the second product of (\ref{eqJ.2}) equals
\begin{align}\label{E:b1}
	\prod_{s\in\dg'(\lambda) \atop s \not\in\, \text{row }1}(1-q^{1+\mathsf{leg}(s)}t^{1+\mathsf{arm}(\South(s))}) &=\prod_{\sigma(s)=\sigma(\South(s))}(1-q^{1+\mathsf{leg}(s)}t^{1+\widehat{\arm}(s)}) \notag\\
	&\qquad \times
	\prod_{\sigma(s)\ne\sigma(\South(s)) \atop s \not\in\, \text{row }1}(1-q^{1+\mathsf{leg}(s)}t^{1+\overline{\arm}(s)}).
\end{align}
We simply denote $m=\lambda_j'-\lambda_{j+1}'$ and $h=\lambda_i'-\lambda_j'$ for some $1\le i\le j<\lambda_1$. Let $\sigma=\sigma_{\lambda_1}\sqcup\cdots\sqcup \sigma_1$, where each $\sigma_j$ is a rectangular filling of height $j$ and width $m$.  Consider the boxes $s$ in row $i+1$ of $\sigma_j$. The $\arm$-lengths of the $m$ boxes in row $i$ of  $\sigma_j$ from left to right are $h+m-1,h+m-2,\ldots,h$. Now suppose there are $k$ restricted boxes in row $i+1$ of $\sigma_j$. Then the $\widehat{\arm}$-lengths of such restricted boxes, from left to right, are $h+m-1,h+m-2,\ldots,h+m-k$, and the $\overline{\arm}$-lengths of the remaining unrestricted boxes, from left to right, are $h,h+1,\ldots,h+m-k-1$. Since every box $s$ in row $i+1$ of $\sigma_j$ has the same $\leg$-length, the product over all boxes in this row decomposes precisely into the product over restricted and unrestricted boxes as claimed in (\ref{E:b1}).
Summing over all rows $i$ and rectangles $\sigma_j$ establishes (\ref{E:b1}).
Finally, combining (\ref{E:b1}) with (\ref{eqthm1.2}) and (\ref{eqJ.2}) yields
\eqref{eqthm2.1}, completing its proof.
\qed

The factor $\prod_{i\ge 1}[m_i]_t!^{-1}$ will be cancelled by accounting for the multiplicities that arise from permuting the top-row entries of top-row increasing tableaux. 

{\em Proof of (\ref{eqthm2.2})}. Let $S(\lambda')$ be the set of top-row increasing tableaux of shape $\dg'(\lambda)$. Equation (\ref{eqthm2.2}) is equivalent to  
\begin{align}\label{eqpfthm2.3}
	&\sum_{\sigma\in\T(\lambda') \atop \sigma \ \text{non-attacking} }
	q^{\maj(\sigma)}t^{\overline{\quadcoinv(\sigma)}}c_{\sigma}(q,t)x^{\sigma} \notag\\
	&=\prod_{i\ge 1}[m_i]_t!\sum_{\sigma\in S(\lambda') \atop \sigma \ \text{non-attacking}}
	q^{\maj(\sigma)}t^{\overline{\quadcoinv}(\sigma)}c_{\sigma}(q,t)x^{\sigma},
\end{align}
where $c_{\sigma}(q,t)$ is defined in (\ref{E:cqt}). For $\pi=\pi_1\cdots \pi_n\in S_n$ in one-line notation, let $\inv(\pi)$ be the number of inversions of $\pi$. The coinversion number of $\pi$ is
\begin{align*}
	\overline{\inv}(\pi)=|\{(i,j): 1\le i<j\le n \,\textrm{ and }\, \pi_i<\pi_j\}|.
\end{align*}
Recall from \cite{LN12}  that 
\begin{align*}
	\sum_{\pi\in S_{n}} t^{\inv(\pi)}=\sum_{\pi\in S_{n}} t^{\overline{\inv}(\pi)}=[n]_t!,
\end{align*}
which implies the product identity
\begin{align}\label{E:mit}
	\prod_{i\ge 1} \sum_{\pi\in S_{m_i}} t^{\inv(\pi)}=\prod_{i\ge 1}[m_i]_t!.
\end{align}
We will use (\ref{E:mit}) to give a combinatorial proof of (\ref{eqpfthm2.3}). Let $\sigma\in S(\lambda')$ be an arbitrary non-attacking and top-row increasing tableau. Consider its factorization $\sigma=\sigma_{\lambda_1} \sqcup \cdots \sqcup \sigma_1$, where each $\sigma_j$ is a rectangular filling of height $j$. By Definitions \ref{Def:non-attacking} and \ref{Def:quad_sorted}, the entries in the top row of every $\sigma_j$ are strictly increasing. 

Recall the flip operator $\xi_{w}^r$ paired with a permutation $w$ as defined by (\ref{E:wflip2}). 
Fix a rectangle $\sigma_j$ whose top-row entries are $a=(a_1<\cdots <a_n)$. 
Let $w$ be a permutation of minimal length such that $(a_{w^{-1}(1)},\ldots,a_{w^{-1}(n)})=(w_1,\ldots,w_n)$.
Since the increasing top-row $a$ contributes $\overline{\inv}(a)$ to $\quadcoinv(\sigma)$, applying Lemma \ref{lem3.2} to the operator $\xi_{w}$ gives
	\begin{align}\label{E:sort1}
		\maj(\xi_w(\sigma))&=\maj(\sigma),\\
	\overline{\quadcoinv}(\xi_{w}(\sigma))-\overline{\quadcoinv}(\sigma)
	&=\quadcoinv(\sigma)-\quadcoinv(\xi_w(\sigma)),\notag\\
	&=\overline{\inv}(a)-\overline{\inv}(w)\notag\\
	&=\inv(w)=\ell(w).\label{E:sort2}
\end{align}
Now, take an arbitrary non-attacking filling $\tau\in \T(\lambda')$. By sorting the top-row entries of each rectangle $\tau_j$ into increasing order, we obtain a unique top-row increasing tableau $\sigma$.
Let $w\in S_{m_{\lambda_1}}\times \cdots \times S_{m_1}$ be the shortest permutation that performs this sorting on $\tau$ such that $\tau=\xi_{w}(\sigma)$. Lemma \ref{lem3.2} guarantees that (\ref{E:sort1}) and (\ref{E:sort2}) hold for this $w$. 
Define $\tau\in \langle\sigma\rangle$ if $\tau_j\in [\sigma_j]$ for all $1\le j\le \lambda_1$. Thus, $\langle\sigma\rangle\subseteq [\sigma]$, and hence $[\sigma]=\cup_{\tau\in [\sigma]}\langle\tau\rangle$. 
Consequently, summing over all non-attacking fillings $\tau$ in the class $\langle\sigma\rangle$ yields 
\begin{align}
	\sum_{\tau\in\langle\sigma\rangle\atop \tau \ \text{non-attacking} }q^{\maj(\tau)}t^{\overline{\quadcoinv}(\tau)}x^{\tau}
	&=q^{\maj(\sigma)}t^{\overline{\quadcoinv}(\sigma)}x^{\sigma} \sum_{w\in S_{m_{\lambda_1}}\times \cdots \times S_{m_1}}t^{\inv(w)}\notag\\
	&=\prod_{i\ge 1}[m_i]_t!\,q^{\maj(\sigma)}t^{\overline{\quadcoinv}(\sigma)}x^{\sigma},\label{E:sigma1}
\end{align}
where the last equality uses (\ref{E:mit}). Finally, Lemma \ref{lem3.2} ensures that the number of (un)restricted boxes in each row is invariant under $\xi_{w}$, and Remark \ref{R:1} states that the set of $\overline{\arm}$-lengths of unrestricted boxes is independent of their positions. Therefore, we obtain $c_{\sigma}(q,t)=c_{\tau}(q,t)$ for $\tau=\xi_{w}(\sigma)$. Combining this with (\ref{E:sigma1}) completes the proof of (\ref{eqpfthm2.3}) and (\ref{eqthm2.2}).
\qed

\section{Proofs of Theorems \ref{thm3}, \ref{T:main1} and \ref{cor:1}}\label{S:7}
This section focuses on the equidistribution of pairs $(\maj,\overline{\quadcoinv})$ and $(\maj,\eta^{\circ})$ over non-attacking and top-row increasing tableaux, as the final step in proving Theorem \ref{T:main1}. Subsequently, we prove Theorem \ref{cor:1} using the compact formula given in \cite{JL25}.

We first recall the family $\A^+$ of statistics $\eta$ introduced in \cite[Definition 4]{JL25}. We then construct the desired bijection $\varphi$ as a composition of the quadruple-coinversion flip operators $\xi_i^r$ and the flip operators $\delta_i^r$.
\begin{definition}[A set $\S$ of quadruples]\label{Def:setS}
	Let $\S$ be a set of quadruples $(z,w,u,v)$, each equipped with a specific total orderings of the four entries, satisfying the following conditions:
	\begin{enumerate}
		\item For $z\ne w$, $(z,w,u,v)\in \S$ if and only if $(w,z,v,u)\in \S$.
		\item The entries $u\ne v$ and exactly one of $(z,w,u,v)$ and $(z,w,v,u)$ belongs to $\S$.
		\item $(z,z,u,v)\in \S$ if and only if $(z,u,v)$ is a queue inversion triple, that is, $(z=w>v>u)$ or $(u\ge z=w>v)$ or $(v>u\ge z=w)$.
		\item Either both quadruples satisfying $(z>u>v\ge w)$ and $(v\ge z>w>u)$ are in $\S$, or neither is.
		\item $\S$ must contain at least one quadruple satisfying $(z>v\ge w>u)$ or $(u\ge z>v\ge w)$.
	\end{enumerate}
	Such a set $\S$ is called a {\em $\quinv$-quadruple set}. Since each element of $\S$ is uniquely associated with a total ordering of its four entries, we may refer to an element interchangeably as the quadruple $(z,w,u,v)$ or as that total ordering.
\end{definition}
According to Definition \ref{Def:setS}, we now enumerate all possible $\quinv$-quadruple sets $\S$ in the following lemma.
\begin{lemma}\cite[Lemma 11]{JL25}\label{L:setSi}
	Let $\S^{*}=\{(z,w,u,v)\mid (z=w>v>u), (u\ge z=w>v),(v>u\ge z=w)\}$ and let $\S_i$ be the set given as below:
	\begin{itemize}
		\item $\S_1=\S^{*}\,\dot\cup\,\{(z,w,u,v),(w,z,v,u) \mid (z>v\ge w>u),\, (u\ge z>v\ge w),\, (z>w>v>u), \,(u>v\ge z>w),\, (z>u>v\ge w),\, (v\ge z>w>u)\}$;\\
		\item $\S_2=\S^{*}\,\dot\cup\,\{(z,w,u,v),(w,z,v,u) \mid (z>v\ge w>u),\, (u\ge z>v\ge w),\, (z>w>v>u), \,(v>u\ge z>w),\, (z>u>v\ge w),\, (v\ge z>w>u)\}$;\\
		\item $\S_3=\S^{*}\,\dot\cup\,\{(z,w,u,v),(w,z,v,u) \mid (z>v\ge w>u),\, (u\ge z>v\ge w),\, (z>w>u>v), \,(u>v\ge z>w),\, (z>u>v\ge w),\, (v\ge z>w>u)\}$;\\
		\item 
		$\S_4=\S^{*}\,\dot\cup\,\{(z,w,u,v),(w,z,v,u) \mid (z>v\ge w>u),\, (u\ge z>v\ge w),\, (z>w>u>v), \,(v>u\ge z>w),\, (z>u>v\ge w),\, (v\ge z>w>u)\}$;\\
		\item $\S_5=\S^{*}\,\dot\cup\,\{(z,w,u,v),(w,z,v,u) \mid (z>v\ge w>u),\, (u\ge z>v\ge w),\, (z>w>v>u), \,(u>v\ge z>w),\, (z>v>u\ge w),\, (u\ge z>w>v)\}$;\\
		\item $\S_6=\S^{*}\,\dot\cup\,\{(z,w,u,v),(w,z,v,u) \mid (z>v\ge w>u),\, (u\ge z>v\ge w),\, (z>w>u>v), \,(u>v\ge z>w),\, (z>v>u\ge w),\, (u\ge z>w>v)\}$;\\
		\item $\S_7=\S^{*}\,\dot\cup\,\{(z,w,u,v),(w,z,v,u) \mid (z>v\ge w>u),\, (u\ge z>v\ge w),\, (z>w>v>u), \,(v>u\ge z>w),\, (z>v>u\ge w),\, (u\ge z>w>v)\}$;\\
		\item $\S_8=\S^{*}\,\dot\cup\,\{(z,w,u,v),(w,z,v,u) \mid (z>v\ge w>u),\, (u\ge z>v\ge w),\, (z>w>u>v), \,(v>u\ge z>w),\, (z>v>u\ge w),\, (u\ge z>w>v)\}$.
	\end{itemize}
	Then, the eight sets $\S_i$ ($1\le i\le 8$) constitute the complete list of all $\quinv$-quadruple sets.
	\end{lemma}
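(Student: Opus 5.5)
The statement is a finite classification, so I will prove it by a direct case analysis on total orderings of the four entries $(z,w,u,v)$, organized by the axioms of Definition~\ref{Def:setS}. The plan has two halves: each $\S_i$ satisfies (1)--(5), and every $\quinv$-quadruple set is one of the $\S_i$.

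First I fix the universe of configurations. By (2), $u\ne v$. By (3), the case $z=w$ is completely determined and gives exactly $\S^{*}$. So it suffices to look at $z\ne w$. By (1), I may further assume $z>w$ and recover the $z<w$ quadruples by the symmetry $(z,w,u,v)\mapsto(w,z,v,u)$.

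Next I list the relative orderings of $z>w$ together with $u\ne v$. Ties are allowed only in the patterns $u\ge z$, $v\ge w$, and so on, as dictated by the convention that equal letters are placed according to the queue-inversion conventions of Definition~\ref{Def:setS}(3). Under this convention $u,v$ occupy positions relative to the chain $z>w$ in the regions ``above $z$'', ``between'', and ``below $w$''. Axiom (2) pairs each ordering with the one obtained by swapping $u$ and $v$. I expect exactly six such pairs:
\[
\{z>v\ge w>u,\ z>u\ge w>v\},\quad \{u\ge z>v\ge w,\ v\ge z>u\ge w\},\quad \{z>w>v>u,\ z>w>u>v\},
\]
\[
\{u>v\ge z>w,\ v>u\ge z>w\},\quad \{z>u>v\ge w,\ z>v>u\ge w\},\quad \{v\ge z>w>u,\ u\ge z>w>v\}.
\]
Axiom (2) then says $\S$ picks exactly one member of each pair, giving $2^6$ candidates. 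The main technical step is to get this list of ordering classes exactly right, with correct tie-handling, since the whole count depends on it. I would verify it by enumerating the weak orders on $\{z,w,u,v\}$ with $z>w$ and $u\ne v$ and matching the tie conventions against condition (3).

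Finally I impose the remaining axioms on the $2^6$ candidates.
\begin{itemize}
\item Axiom (4) couples the fifth and sixth pairs. Either both $z>u>v\ge w$ and $v\ge z>w>u$ are chosen, or neither is, in which case the swapped partners $z>v>u\ge w$ and $u\ge z>w>v$ are chosen. This leaves $2^5$ candidates.
\item Axiom (5) forces at least one of $z>v\ge w>u$ and $u\ge z>v\ge w$. The listed answer suggests that, combined with (1), both are forced. Establishing this is the genuine obstacle, because (5) as stated only says ``at least one''.
\end{itemize}
For the second bullet, I would first try to show that the first and second pairs are linked by (1). The map $(z,w,u,v)\mapsto(w,z,v,u)$ sends an ordering with $z>w$ to one with $w>z$. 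After relabeling back to $z>w$, I would check whether it identifies the pair choices so that choosing $z>u\ge w>v$ forces $v\ge z>u\ge w$. That in turn would contradict (5) unless both listed orderings are chosen. If this linkage holds, the choices reduce to the third and fourth pairs and the coupled fifth/sixth pair, giving $2^3=8$ sets. These match $\S_1,\dots,\S_8$, and the converse check that each $\S_i$ satisfies (1)--(5) is routine.

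If the linkage fails, I would instead appeal to the paper's reliance on \cite[Lemma 11]{JL25}. There the same enumeration is carried out, and I would reproduce its argument for excluding the mixed choices.
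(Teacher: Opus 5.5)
Your reduction is set up correctly. Condition (3) yields $\S^{*}$. The twelve orderings with $z>w$ (a tie between an upper and a lower entry is resolved in favour of the lower one) split into exactly the six $u\leftrightarrow v$ pairs you list. Condition (2) picks one member of each pair, (4) couples your fifth and sixth pairs, and (1) only determines the $z<w$ part of $\S$ from its $z>w$ part.

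The step meant to force both $(z>v\ge w>u)$ and $(u\ge z>v\ge w)$ fails, however. The map $(z,w,u,v)\mapsto(w,z,v,u)$ sends every ordering with $z>w$ to one with $z<w$. For instance, $z>u\ge w>v$ goes to $w>v\ge z>u$, not to $v\ge z>u\ge w$, and ``relabeling back'' simply undoes the map. So (1) imposes no relation at all among the $z>w$ orderings, and your first two pairs are constrained only by (5).

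With (5) as printed (``at least one''), this is a genuine obstruction, not a technicality. The choices $\{z>v\ge w>u,\ v\ge z>u\ge w\}$ and $\{z>u\ge w>v,\ u\ge z>v\ge w\}$ for the first two pairs also satisfy (1)--(5), so there are $3\cdot 2^3=24$ such sets rather than $8$. Concretely, replace $(z>v\ge w>u)$ and its column-swap in $\S_1$ by $(z>u\ge w>v)$ and its column-swap. The result satisfies all five axioms, since it still contains $(u\ge z>v\ge w)$, but it is not among $\S_1,\dots,\S_8$.

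So the lemma cannot be proved from Definition~\ref{Def:setS} as written. It holds exactly when (5) is read as requiring \emph{both} $(z>v\ge w>u)$ and $(u\ge z>v\ge w)$ to lie in $\S$. Every listed $\S_i$ does contain both, and this is presumably the form of the axiom in \cite{JL25}. Under that reading your enumeration closes immediately: pairs one and two are fixed, and the remaining free choices are pair three, pair four and the coupled pair five/six. That gives $2^3=8$ sets, which are precisely $\S_1,\dots,\S_8$. Your fallback of appealing to \cite{JL25} is not itself an argument; the paper likewise gives no proof here beyond that citation.
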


We next associate each $\quinv$-quadruple set $\S$ with a statistic $\eta$ defined on the set of positive fillings.
\begin{definition}(A family $\A^+$ of statistics)\label{Def:eta1}
	For a positive filling $\sigma$, consider the quadruple $(z,w,u,v)$ of entries in the augmented super filling $\hat{\sigma}$ (Definition \ref{Def:inv}),
	positioned as shown in the left diagram of Figure \ref{F:f1}, with the columns containing $(z,u)$ and $(w,v)$ having equal height in $\sigma$. 
		
	For such quadruple $(z,w,u,v)$, define $\Q_{\S}(z,w,u,v)=1$ and call it an {\em  $\S$-quadruple} if $(z,w,u,v)$ satisfies one of the total orderings in $\S$; otherwise $\Q_{\S}(z,w,u,v)=0$. This yields a statistic $\eta_{\S}:\T(\lambda)\rightarrow \mathbb{N}$ given by 
	\begin{align*}
		\eta_{\S}(\sigma)=\sum_{(z,w,u,v)\in \sigma}\Q_{\S}(z,w,u,v),
	\end{align*}
    which counts the number of $\S$-quadruples in $\sigma$.
    
	An {\em $\S$-triple} $(a,b,c)$ is a queue inversion triple for which the column containing $(a,b)$ is longer than the column containing $c$ in $\sigma$. Define $\Q_{\S}(a,b,c)=1$ for an $\S$-triple and $0$ otherwise. The statistic $\eta$ associated with $\S$ is then
	\begin{align}\label{E:etai}
		\eta(\sigma)=\eta_{\S}(\sigma)+\sum_{(a,b,c)\in\sigma}\Q_{\S}(a,b,c).
	\end{align}
    Finally, let $\A^+$ be the set of all statistics $\eta$ arising from the $\quinv$-quadruple sets $\S_i$ in Lemma \ref{L:setSi}. 
	\end{definition} 

Now we are in a position to provide an example of Theorem \ref{T:main1}.
\begin{example}\label{Example:mainT}
	Given the partition $\lambda=(2,2,2)$ and a filling $\sigma=\raisebox{-8pt}{\young(456,123)}$ of the Young diagram $\dg'(\lambda)$, there are six non-attacking and top-row increasing tableaux in $[\sigma]$. We compute their contributions to the coefficient $[x_1 x_2 x_3 x_4 x_5 x_6]P_{\lambda}(X;q,t)$ using (\ref{eqthm3.3}) of Theorem \ref{T:main1}. We take $\S=\S_8$ from Lemma \ref{L:setSi} and define the statistics $\eta$ and $\eta^{\circ}$ according to (\ref{E:etai}) and (\ref{E:eta2}).
			\begin{table}[H]
				\begin{adjustwidth}{-0cm}{0cm}
					\renewcommand\arraystretch{1.8}
					\begin{center}
						\begin{tabular}{c| c c c c c c}%
							$\tau$   & $\young(456,123)$ & $\young(456,213)$  & $\young(456,321)$ & $\young(456,312)$ & $\young(456,132)$ & $\young(456,231)$ \\ \hline			
							$q^{\maj(\tau)} t^{\eta^{\circ}(\tau)}$   & $q^3$        & $q^3 t$      &     $q^3 t^3$       &   $q^3 t^2$         &  $q^3 t$  &  $q^3 t^2$            
						\end{tabular}
					\end{center}
				\end{adjustwidth}
			\end{table}
		\vspace{-5mm}
	   For each of these fillings, we have $c_{\tau}(q,t)=(1-t)^3(1-qt)^{-1}(1-qt^2)^{-1}(1-qt^3)^{-1}$. Therefore, (\ref{eqthm3.3}) gives the total contribution to $P_{\lambda}(X;q,t)$ as
	   \begin{align*}
	   	q^3 (1+2t+2t^2+t^3) \frac{(1-t)^3x_1 x_2 x_3 x_4 x_5 x_6}{(1-qt)(1-qt^2)(1-qt^3)}.
	   \end{align*}
\end{example}


The flip operator $\delta_i^r$ (Definition \ref{Def3.1}) is paired with the family of statistics $\eta\in \A^+$. We now recall some relevant results from \cite{JL25}.

  \begin{lemma}\cite[Lemma 6]{JL25}\label{L:delta}
  	Given a partition $\lambda$ with $\lambda'_i=\lambda'_{i+1}$ for a fixed $i$, let $\sigma\in\T(\lambda)$. Then 
  	\begin{align*}
  		\maj(\sigma)-\maj(\delta_i^r(\sigma))&=\maj(\sigma\vert_{r}^{r+1})-\maj(\delta_i^r(\sigma)\vert_{r}^{r+1}),\\
  		\eta(\sigma)-\eta(\delta_i^r(\sigma))&=\eta(\sigma\vert_{r}^{r+1})-\eta(\delta_i^r(\sigma)\vert_{r}^{r+1}).
  	\end{align*}
  	That is, the major index, and the number of $\S$-quadruples and $\S$-triples of $\sigma$ between two consecutive rows that lie entirely above row $r$ or entirely below row $r+1$ are invariant under $\delta_i^r$.
  \end{lemma}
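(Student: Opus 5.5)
The statement to prove is Lemma~\ref{L:delta}, the locality property of $\delta_i^r$ with respect to $\maj$ and $\eta$. The plan is a direct locality argument. Since $\delta_i^r=t_i^{[k,r]}$ swaps the entries of columns $i$ and $i+1$ in rows $k$ through $r$, every contribution to $\maj$ or $\eta$ that does not straddle the boundary rows $k-1/k$ or $r/r+1$ involves either no swapped entries, or a full swapped pair.

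First, $\maj$. A descent at a box in row $j$ compares the entries in rows $j$ and $j-1$ of the same column. For $k<j\le r$ both entries move together to the other column. Since columns $i$ and $i+1$ have equal height, the leg is unchanged, so these descents are preserved. Rows outside $[k-1,r+1]$ are untouched. At the lower boundary, $\sigma(k,i)=\sigma(k,i+1)$ by the choice of $k$ (or $k=1$, where row $0$ is the $\infty$ row). Hence swapping row $k$ changes nothing between rows $k-1$ and $k$. The only possible change is therefore between rows $r$ and $r+1$.

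Next, $\eta$, which counts $\S$-quadruples and $\S$-triples in consecutive rows $j,j+1$. I would split into three kinds of pairs.

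\emph{Pairs of rows both inside $[k,r]$.} Swapping columns $i$ and $i+1$ in both rows acts as a column permutation on that two-row strip. A quadruple using columns $i$ and $i+1$ together, $(z,w,u,v)\mapsto(w,z,v,u)$, is handled by condition (1) of Definition~\ref{Def:setS} when $z\ne w$. When $z=w$, note that $\sigma(k,\cdot)$ equal means nothing special for interior rows. Here I would instead use that the total count $\Q_\S(z,w,u,v)+\Q_\S(w,z,v,u)$ is invariant under swapping, which is exactly what condition (1) and condition (3) give. Quadruples or triples pairing column $i$ or $i+1$ with another column $c$ are just permuted among themselves. Two caveats apply. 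First, an $\S$-triple requires the $(a,b)$-column to be longer than the $c$-column, and column lengths are equal for $i,i+1$. Second, left/right position relative to $c$ is the same for both columns, since $c\notin\{i,i+1\}$.

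\emph{The lower boundary pair $(k-1,k)$.} This needs $\sigma(k,i)=\sigma(k,i+1)=:z$, so swapping row $k$ is the identity on entries. No change occurs.

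\emph{Rows outside $[k-1,r+1]$.} These are untouched.

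Hence the difference is concentrated in the strip $\sigma|_r^{r+1}$, giving both identities.

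The main obstacle is the interior-pair verification for quadruples using both columns $i,i+1$. I must check carefully that the set of counted quadruples is mapped bijectively, including the degenerate cases $z=w$ or $u=v$. For $u=v$, condition (2) forbids it, so such quadruples are never counted in either configuration. For the mixed cases I would rely on the symmetry axiom (1) of Definition~\ref{Def:setS}.
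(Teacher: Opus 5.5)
Your locality framework is the right one. The $\maj$ part is fine, and quadruples or $\S$-triples pairing column $i$ or $i+1$ with a third column are indeed just permuted. But your step for quadruples spanning columns $i,i+1$ in an interior pair of rows is wrong as written.

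Suppose rows $j$ and $j+1$ are both swapped and $z=\sigma(j+1,i)=\sigma(j+1,i+1)=w$. Then the quadruple changes from $(z,z,u,v)$ to $(z,z,v,u)$. Conditions (2) and (3) of Definition~\ref{Def:setS} say that exactly one of these lies in $\S$ whenever $u\ne v$. So the $\eta$-count between rows $j$ and $j+1$ would change by $\pm 1$, contradicting the lemma. The sum $\Q_{\S}(z,w,u,v)+\Q_{\S}(w,z,v,u)$ you appeal to is invariant under the swap for the trivial reason that it is symmetric. It says nothing about whether the value before the swap equals the value after, which is what you need.

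The missing idea is the one you set aside when you wrote that equality in row $k$ ``means nothing special for interior rows.'' The maximality of $k$ in the definition of $\delta_i^r$ gives $\sigma(j,i)\ne\sigma(j,i+1)$ for every $j$ with $k<j\le r$. When no such $k$ exists, this holds for all $1\le j\le r$. The upper row of every interior pair lies in this range, so $z=w$ never occurs there. Condition (1) alone then settles these quadruples, including when the lower row is row $k$ with $u=v$, where both sides are excluded by condition (2).

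This stopping rule is exactly what makes the lemma true. For the unstopped swap $t_i^{[1,r]}$, locality can fail when some row $j+1\le r$ has equal entries in columns $i,i+1$ while the row below it does not. With this correction your argument is complete.
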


   \begin{theorem}\cite[Theorem 2 and Section 5]{JL25} \label{T:eta1}
   	 For any statistic $\eta\in \A^+$, there is a bijection $\gamma:\T(\lambda')\rightarrow \T(\lambda')$ such that for all $\sigma\in \T(\lambda')$, we have $\gamma(\sigma)\sim \sigma$, 
   	\begin{align}
   		\label{E:gamma1}(\maj,\eta)\gamma(\sigma)&=(\maj,\quinv)\sigma,
   	\end{align}
   	where the top rows of $\gamma(\sigma)$ and $\sigma$ are identical and $\des(\sigma\vert_{r}^{r+1})=\des(\gamma(\sigma)\vert_r^{r+1})$ for all $r$. 
  \end{theorem}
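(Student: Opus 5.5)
We would construct $\gamma$ two rows at a time, in the same way as the bijection $\psi$ in the proof of Theorem \ref{thm5.1}. The queue-inversion flips $\rho_i^r$ carry $\quinv$, the flips $\delta_i^r$ carry $\eta$, and Lemmas \ref{lem3.1} and \ref{L:delta} localize every change of statistics to the two rows being processed.

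\emph{Base case: two rows.} Consider a single rectangle $\sigma_j$ restricted to rows $r,r+1$. Columns of different heights are not touched: by Definition \ref{Def:eta1} their contributions are $\S$-triples, which are exactly the $\quinv$-triples involving a longer column. Within $\sigma_j$ we proceed as follows.
\begin{enumerate}
\item Apply a product $\rho_w$ of queue-inversion flips on row $r+1$, with $w$ read along the PDS of $\w^*$. This brings $\sigma$ to a canonical filling $\Omega$ in which, column by column, the top entry is placed according to its relative position with respect to the bottom row. Concretely, restricted columns ($z=u$) come first, so that no $\S$-quadruple of type $(z,z,u,v)$ or mixed type can disagree with the corresponding $\quinv$-triple.
\item For this canonical configuration, check directly from conditions (2)--(5) of Definition \ref{Def:setS} that the number of $\S$-quadruples between rows $r$ and $r+1$ equals the number of queue inversion triples $(z,u,v)$ there. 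Condition (3) handles $z=w$. Conditions (2) and (4) force the quadruple count over each pair of columns to agree with the triple count once the top row is in canonical order.
\item Apply $\delta_w^{-1}$, that is, the $\delta$-flips along the reversed reduced word. This restores the original top row; this is possible because $\rho_i$ and $\delta_i$ both swap the two top entries of columns $i,i+1$, and only the lower extent of the swap differs.
\end{enumerate}
Lemma \ref{lem3.1} and Lemma \ref{L:delta} then give the two-row identity $(\maj,\eta)=(\maj,\quinv)$. They also show that the number of descents between rows $r,r+1$ is unchanged, since each flip preserves $\maj$ row by row. Finally, the result is row-equivalent to $\sigma$ with the same top row.

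\emph{Induction on rows.} For a general diagram we proceed from the bottom up. Apply the two-row map to rows $(1,2)$, then to rows $(2,3)$ of the result, and so on. Each step modifies entries only in rows $\le r+1$, and the two localization lemmas show that the statistics between already-processed lower pairs of rows are unaffected. The top row of $\sigma_j$ is never altered, and descents are preserved in each consecutive pair of rows. Invertibility follows because every step is a composition of involutions indexed by a fixed PDS. Since $\delta$ satisfies the braid relations, the choice of reduced word on the $\delta$ side does not matter. The word for $\rho$ is fixed by the PDS convention.

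\emph{Main obstacle.} The hardest step is step 2 of the base case: producing a canonical form for the top row under $\rho$-flips where the $\S$-quadruple and $\quinv$-triple counts agree. A related difficulty is checking that the inverse $\delta$-flips change $\eta$ by exactly the amount the $\rho$-flips changed $\quinv$. This is where the case analysis over the eight sets $\S_i$ of Lemma \ref{L:setSi} enters. We would first try to prove a single swap-by-swap statement: for adjacent columns with top entries $a,b$ and bottom entries $c,d$, the change $\Q(a,c,d)-\Q(b,c,d)$ under $\rho_i$ equals the change in $\Q_{\S}$ under $\delta_i$. We would verify it from conditions (1), (2) and (4) together with the list $\S_1,\ldots,\S_8$, case by case on the relative order of $a,b,c,d$.
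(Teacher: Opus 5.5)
\textbf{The gap is in Step~2 of your base case.} You claim that once the $\rho$-flips have put the top row into a canonical order, conditions (2)--(5) of Definition~\ref{Def:setS} force every pair of columns to contribute equally to $\eta$ and to $\quinv$. No such order exists in general.

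Take columns $(z,u)$ to the left of $(w,v)$ with distinct entries. The sorting direction that gives $\Q_{\S}(z,w,u,v)=\Q(z,u,v)$ depends on the type of the pair:
\begin{itemize}
\item for two descent columns, an increasing top row is needed iff $(z>w>u>v)\in\S$;
\item for two non-descent columns, iff $(u>v\ge z>w)\in\S$;
\item for a descent/non-descent pair, iff $(z>u>v\ge w)\in\S$.
\end{itemize}
These three rules agree only for $\S_3$ and $\S_7$.

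Concretely, take $\S=\S_2$. The filling with top row $(3,4)$ over $(1,2)$ has $\quinv=2$ but $\eta=1$. The filling with top row $(4,1)$ over $(2,3)$ has $\quinv=1$ but $\eta=0$. Both top rows are already sorted, so for either fixed direction one of them has $\Omega=\sigma=\gamma(\sigma)$, and the required identity fails. A data-dependent order does not rescue this: put top entries $d_1<n<d_2$ on two descent columns and one non-descent column. The pairwise requirements are then cyclic, so no arrangement of the top row works for all choices of bottom entries. Note also that restricted versus unrestricted is not the relevant dichotomy here; descent versus non-descent is.

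What is missing is the argument of \cite[Section 5]{JL25}, mirrored in Cases II--III of the proof of Theorem~\ref{thm6.1}.
\begin{itemize}
\item Your map $\delta_\pi^{-1}\rho_\pi$ is the right one only for all-descent or all-non-descent rectangles, with the direction fixed by the corresponding rule.
\item For a mixed rectangle, sort by the mixed rule. This makes the descent/non-descent cross terms agree \cite[Equation (5.16)]{JL25}.
\item Then apply the pure-case bijections separately to the descent subfilling and to the non-descent subfilling. These only permute bottom entries within each class.
\item Finally, prove that the number of $\S$-quadruples formed by one descent and one non-descent column is invariant under such permutations \cite[Proposition 14]{JL25}; cf.~\eqref{E:Q11}.
\end{itemize}
That invariance is the real content, and it is what reconciles the conflicting directions.

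\textbf{Two further points.} First, the swap-by-swap comparison you single out as the main obstacle is automatic in two rows. By Lemma~\ref{lem3.1}, $\rho_i$ changes $\quinv$ only through the top-row triple $(0,\cdot,\cdot)$. On two rows, $\delta_i$ is either trivial or swaps whole columns, so by conditions (1) and (3) it changes $\eta$ only through the same top-row term.

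Second, your multi-row induction is not justified as stated. Lemma~\ref{lem3.1} protects $\quinv$, not $\eta$, at lower interfaces, and $\rho_i^{r+1}$ can reach below row $r$. Take two columns with rows $(1,2),(3,5),(4,6)$ from the bottom. Since $\Q(4,3,5)\ne\Q(6,3,5)$ but $\Q(3,1,2)=\Q(5,1,2)$, the flip $\rho_1^3$ stops at $k=2$ and swaps rows $2$ and $3$. This keeps $\quinv$ between rows $1$ and $2$, but raises the $\S_2$-quadruple count there from $0$ to $1$. The subsequent $\delta^3$-flips preserve $\eta$ at that interface (Lemma~\ref{L:delta}), so they cannot undo the change.

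One repair is to realize each two-row step as a permutation of the \emph{lower} row $r$ by $\delta^r$-flips, working bottom-up. These are well defined by the braid relation. By Lemma~\ref{L:delta} they leave $\eta$ and $\maj$ unchanged at all lower interfaces, and they never move rows above $r$.
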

  By Definition \ref{Def2.2}, for a non-attacking positive filling $\sigma$, the value $\overline{\quadcoinv}(\sigma)$ enumerates
  \begin{itemize}
  	\item 
   quadruples $(z,w,u,v)$ (as shown on the left of Figure \ref{F:f1}) that lie in two columns of equal height in $\sigma$ such that $z,w,u,v$ are distinct and $(z,u,v)$ is not a queue inversion triple; \vspace{2mm}
   \item triples $(z,u,v)$ (as shown on the right of Figure \ref{F:f1}) that lie in two columns of unequal height for which $(z,u,v)$ is not a queue inversion triple.
  \end{itemize}

  Let $w=(w_{\lambda_1},\ldots,w_{1})$ be the sequence of top-row entries $w_j$ of all rectangles $\sigma_j$ in the filling $\sigma=\sigma_{\lambda_1}\sqcup\cdots\sqcup \sigma_1$, read from left to right. The top-row entries $w_j$ contribute $\inv(w_j)$ to $\overline{\quadcoinv}(\sigma)$. Define 
  \begin{align*}
  \overline{\quadcoinv}^{\star}(\sigma)=\overline{\quadcoinv}(\sigma)-\sum_{j=1}^{\lambda_1}\inv(w_j).
  \end{align*}  
 Recall from (\ref{E:eta2}) that $\eta^{\circ}(\sigma)=\bar{\eta}(\sigma)-\alpha(\sigma)$, where $\alpha(\sigma)$ counts those $\S$-quadruples $(z,w,u,v)$ with repetitions for which $\Q_{\S}(z,w,u,v)=0$. In other words, $\eta^{\circ}(
\sigma)$ counts $\S$-quadruples $(z,w,u,v)$ with distinct entries such that $\Q_{\S}(z,w,u,v)=0$.

Let $N^+(\lambda')=S(\lambda')\cap N(\lambda')$ be the set of non-attacking and top-row increasing fillings of the Young diagram $\dg'(\lambda)$. Using some properties of the bijection $\gamma:\T(\lambda')\rightarrow \T(\lambda')$ from Theorem \ref{T:eta1}, we construct the bijection $\varphi: N^+(\lambda')\rightarrow N^+(\lambda')$ for Theorem \ref{thm6.1}.  
  \begin{theorem}\label{thm6.1}
  	For any statistic $\eta\in \A^+$, there is a bijection $\varphi: N^+(\lambda')\rightarrow N^+(\lambda')$ with the following properties for all $\sigma\in N^{+}(\lambda')$,
  	\begin{align}
  		\varphi(\sigma)&\sim \sigma,\notag\\
  		\label{eqthm6.1a}(\maj,\eta^{\circ})(\varphi(\sigma))&=(\maj,\overline{\quadcoinv}^{\star})(\sigma),\\
  		\label{eqthm6.1b}  c_{\sigma}(q,t)
  		&=c_{\varphi(\sigma)}(q,t),
  	\end{align}
  	where the top rows of $\varphi(\sigma)$ and $\sigma$ are identical, and the number of (un)restricted boxes in each row is invariant under $\varphi$. Consequently, Theorem \ref{thm3} holds.
  \end{theorem}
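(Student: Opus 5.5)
The bijection $\varphi$ will be built row by row from the bottom, mirroring the construction of $\psi$ in Theorem \ref{thm5.1} and of $\gamma$ in Theorem \ref{T:eta1}. The guiding idea is that $\overline{\quadcoinv}^{\star}$ and $\eta^{\circ}$ both count only configurations with distinct entries. For $\overline{\quadcoinv}^{\star}$ these are quadruples with distinct entries that are not $\quinv$-like, together with the non-$\quinv$ triples across columns of unequal height. For $\eta^{\circ}$ they are quadruples with distinct entries not in $\S$, together with the complementary triples. On a non-attacking filling, every pair of equal entries in two adjacent rows of a rectangle sits in a single column (a restricted box). Hence the repeated-entry configurations contribute to neither count. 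It therefore suffices to match the two counts on distinct entries, two rows at a time, while preserving $\maj$ and the restricted/unrestricted pattern.

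\textbf{Step 1: the two-row case.} Take $\sigma=\sigma_2\sqcup\sigma_1$ with a fixed bottom row. On each rectangle $\sigma_j$, first use the operators $\xi_i$ to move the restricted columns to the front. By Lemma \ref{lem3.2} this preserves non-attackingness, $\maj$, the number of restricted boxes per row, and the inter-row $\quadcoinv$ count up to the explicit correction \eqref{E:tq1}. In that sorted position, $\quadcoinv$ agrees with $\quinv$ between the two rows, exactly as in \eqref{eqprop5.2b}. Next apply the bijection $\gamma$ of Theorem \ref{T:eta1}, which converts $\quinv$ into $\eta$, keeps the top row and preserves the row descent counts. Finally transport the result back using the $\delta_i$ operators, reversing the sorting permutation $w$ (a PDS in $\mathrm{PDS}(n)$, so everything is well defined). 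We then check three things. First, Lemma \ref{L:delta} ensures that the $\delta$ moves change $\eta$ only between the rows being processed. Second, $\delta_i^r$ stops precisely at rows where $\sigma(k,i)=\sigma(k,i+1)$; this is where restricted columns sit, which is what keeps non-attackingness and the restricted pattern intact. Third, complementing with respect to $n(\lambda)$ and discarding the repeated-entry terms ($\alpha(\sigma)$ on one side, the top-row $\inv(w_j)$ and restricted contributions on the other) turns equality of $\eta$ with $\quadcoinv$ into equality of $\eta^{\circ}$ with $\overline{\quadcoinv}^{\star}$.

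\textbf{Step 2: gluing.} For a general $\dg'(\lambda)$, process consecutive row pairs $r,r+1$ from the bottom upward. At each stage use the operators $\xi_i^r$ and $\delta_i^r$, which by Lemmas \ref{lem3.2} and \ref{L:delta} affect statistics only between rows $r$ and $r+1$. Since each two-row step fixes the top row, the top row of $\varphi(\sigma)$ equals that of $\sigma$, so $\varphi(\sigma)$ remains top-row increasing; this gives $\varphi(\sigma)\sim\sigma$. Preservation of the number of unrestricted boxes per row, combined with Remark \ref{R:1}, gives $c_\sigma(q,t)=c_{\varphi(\sigma)}(q,t)$, which is \eqref{eqthm6.1b}. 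Invertibility holds because every constituent operator is an involution and the choice of $w$ is determined by the top row. Theorem \ref{thm3} then follows: the top row is top-row increasing, so the $\sum_j\inv(w_j)$ term vanishes and $\overline{\quadcoinv}^{\star}=\overline{\quadcoinv}$ on $N^+(\lambda')$.

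\textbf{Main obstacle.} The hard part is the two-row verification that $\gamma$, followed by the $\delta$ transport, sends non-attacking fillings to non-attacking ones and aligns the distinct-entry counts exactly. The statistic $\eta$ also counts the $\S$-quadruples with repeated entries that appear in $\S^*$, and one must show that these are exactly offset by $\alpha$ together with the restricted-column contributions. I would first attempt a case check on the total orderings listed in $\S_1,\dots,\S_8$, restricted to quadruples in which $|z|=|u|$ or $|w|=|v|$. The conditions (3)–(4) of Definition \ref{Def:setS} should make each such case reduce to the queue-inversion triple, matching Definition \ref{Def2.2}.
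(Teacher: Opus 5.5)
\paragraph{The gap.} Your construction fails where you apply the bijection $\gamma$ of Theorem~\ref{T:eta1}. That map is a bijection of the whole class $\T(\lambda')$, and it guarantees only three things: row-equivalence, the same top row, and $(\maj,\eta)\gamma(\sigma)=(\maj,\quinv)(\sigma)$. These requirements can force it out of $N^+(\lambda')$.

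Take $\S=\S_5$, $\lambda=(2,2)$, and let $\sigma\in N^+(\lambda')$ be the rectangle with top row $(1,3)$ and bottom row $(1,2)$.
\begin{itemize}
\item Its restricted column is already in front, so your $\xi$-step does nothing, and $(\maj,\quinv)(\sigma)=(1,2)$.
\item The only other filling with the same rows and the same top row has bottom row $(2,1)$, and it is attacking.
\item Its lower quadruple $(1,3,2,1)$ lies in $\S_5$: by rule (1) of Definition~\ref{Def:setS} it corresponds to $(3,1,1,2)$, of type $(z>v>u\ge w)$. By contrast, $(1,3,1,2)\notin\S_5$. So $\eta_{\S_5}$ is $1$ on $\sigma$ and $2$ on the other filling.
\end{itemize}
Hence $\gamma(\sigma)$ is the attacking filling, and so is your $\varphi(\sigma)$, whereas the theorem forces $\varphi(\sigma)=\sigma$.

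The same happens for $\S_1$. Take $\sigma$ with top row $(2,3)$ and bottom row $(1,3)$. After $\xi_1$ moves the restricted column to the front, $\gamma$ must send (top $(3,2)$, bottom $(3,1)$) to (top $(3,2)$, bottom $(1,3)$), which is attacking.

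So the ``main obstacle'' you flag is not a check left to do; the approach fails there. The $\alpha$-bookkeeping in your third check and the invertibility argument rest on it, so they have no support. A separate error: $\delta_i^r$ stops at horizontally equal entries $\sigma(k,i)=\sigma(k,i+1)$. These never occur in a non-attacking filling, since restricted boxes are vertical equalities. On a non-attacking rectangle, $\delta_i$ simply swaps two whole columns.

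\paragraph{What the paper does instead.} The missing idea is to rebuild the two-row map inside $N^+(\lambda')$ with $\xi$, rather than import $\gamma$. In the rectangular case the paper sets $\varphi(\sigma)=\delta_\pi^{-1}(\tau)$, where $\xi_\pi$ sorts the top row.
\begin{itemize}
\item The sort is increasing or decreasing according to which member of a complementary pair, such as $(z>w>u>v)$ versus $(z>w>v>u)$, lies in $\S$.
\item With that choice, on a block of descent columns (or of ascent columns) with distinct entries, $\S$-quadruples, queue-inversion triples and quadruple coinversions coincide.
\item When $\maj\notin\{0,n\}$, descent and ascent columns are handled separately. The cross terms are controlled by the invariance \eqref{E:Q11} and by \cite[Eq.~(5.16), Prop.~14]{JL25}.
\item The net effect only permutes bottom entries among descent columns and among ascent columns. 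That is what keeps the image non-attacking, with the same restricted boxes and the same triples into shorter columns (Case IV).
\end{itemize}
From \cite{JL25} one needs only these two-row identities, not the map $\gamma$ itself.
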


  \begin{proof}
  	Much like in Theorem \ref{thm5.1}, it suffices to prove Theorem \ref{thm6.1} for the case where $\lambda'$ has two parts. 
  	
  	Case I: If $\sigma$ has only one row, then let $\varphi(\sigma)=\sigma$. By definition we have $\maj(\sigma)=\eta^{\circ}(\sigma)=\overline{\quadcoinv}^{\star}(\sigma)=0$ and trivially $ c_{\sigma}(q,t)=c_{\varphi(\sigma)}(q,t)=1$.
  	
  	Case II (1): If $\sigma$ is a two-row rectangular non-attacking filling with exactly $n$ columns and $\maj(\sigma)=n$. 
By Definition \ref{Def:setS} $(2)$, exactly one of the quadruples $(z>w>u>v)$ and $(z>w>v>u)$ belongs to $\S$.
  	\begin{itemize}
  		\item If $(z>w>u>v)\in\S$, let $\pi$ be the permutation of minimal length that sorts the top row of $\sigma$ into weakly increasing order;
  		\item Otherwise $(z>w>v>u)\in\S$ and let $\pi$ be the permutation of minimal length that sorts its top row into weakly decreasing order.
  	\end{itemize}
  	Define $\varphi(\sigma)=\delta_{\pi}^{-1}(\xi_{\pi}(\sigma))$. By the construction of the operators $\delta_i^r$ and $\xi_i^r$ in Definition \ref{Def3.1}, the top rows of $\varphi(\sigma)$ and $\sigma$ are the same. Since every quadruple $(z,w,u,v)$ in $\xi_{\pi}(\sigma)$ contains four distinct entries, it follows from Definitions \ref{Def:eta1} and \ref{Def2.2} that $(z,w,u,v)$ is an $\S$-quadruple, if and only if $(z,u,v)$ is a queue inversion triple, if and only if $(z,w,u,v)$ is a quadruple coinversion. Therefore, 
  	\begin{align*}
  		\eta^{\circ}(\xi_{\pi}(\sigma))=\overline{\quadcoinv}^{\star}(\xi_{\pi}(\sigma)).
  	\end{align*}
  	Together with Lemmas \ref{lem3.2} and \ref{L:delta}, we obtain
  	\begin{align}\label{xi}
  		(\maj,\overline{\quadcoinv}^{\star})\sigma&= (\maj,\overline{\quadcoinv}^{\star})\xi_{\pi}(\sigma)\notag\\
  		&= (\maj,\eta^{\circ})\xi_{\pi}(\sigma) =(\maj,\eta^{\circ})\varphi(\sigma).
  	\end{align}
    
    Case II (2): If $\sigma$ is a two-row rectangular non-attacking filling with $\maj(\sigma)=0$. Then we define $\varphi(\sigma)=\delta_{\pi}^{-1}(\xi_{\pi}(\sigma))$, where $\pi$ is chosen by the following rule:
    \begin{itemize}
    	\item If $(u>v\ge z>w)\in\S$, let $\pi$ be the permutation of minimal length that sorts the top row of $\sigma$ into weakly increasing order;
    	\item Otherwise $(v>u\ge z>w)\in\S$ and let $\pi$ be the permutation of minimal length that sorts its top row into weakly decreasing order.
    \end{itemize}
    The argument to discuss the properties of $\varphi$ is exactly the same as in Case II (1), so we omit the details.
    
  	Case III: If $\sigma$ is a two-row rectangular non-attacking filling with $\maj(\sigma)\not\in \{0,n\}$. Then $\varphi(\sigma)$ is defined as follows:
  	
  	If $(z>u>v\ge w)\in\S$, let
  	$\pi$ be the permutation of minimal length that sorts its top row into a weakly increasing. Otherwise, if $(z>v>u\ge w)\in\S$, let $\pi$ be the permutation of minimal length that sorts its top row into a weakly decreasing.
  	
  	Let $\sigma_{>}$, $\sigma_{\le}$ and $\sigma_{<}$ denote the subfillings formed by its descent columns, non-descent columns, and ascent columns of $\sigma$, respectively.
  	By \eqref{xi} of Case II, we have
  	\begin{align}
  		(\maj,\overline{\quadcoinv}^{\star})\xi_{\pi}(\sigma)_{>}=(\maj,\eta^{\circ})\varphi(\xi_{\pi}(\sigma)_{>}), \label{xi1}\\
  		(\maj,\overline{\quadcoinv}^{\star})\xi_{\pi}(\sigma)_{<}=(\maj,\eta^{\circ})\varphi(\xi_{\pi}(\sigma)_{<}),\label{xi2}
  	\end{align}
  	where the top rows of $\xi_{\pi}(\sigma)_{>}$  and  $\varphi(\xi_{\pi}(\sigma)_{>})$ are identical, as are those of  $\xi_{\pi}(\sigma)_{<}$  and  $\varphi(\xi_{\pi}(\sigma)_{<})$. Let $\tau$ to be the filling determined by letting $\tau_{>}=\varphi(\xi_{\pi}(\sigma)_{>})$, $\tau_{<}=\varphi(\xi_{\pi}(\sigma)_{<})$, and requiring that for each $i$, column $i$ of $\tau$ is a descent (resp. ascent) column if and only if column $i$ of $\xi_{\pi}(\sigma)$ is a descent (resp. ascent) column. Define $\varphi(\sigma)=\delta_{\pi}^{-1}(\tau)$. 
  	
  	We now examine the change of statistics. Note that the top rows of $\tau$ and $\xi_{\pi}(\sigma)$ are the same, and $\tau$ is non-attacking with $\maj(\tau)=\maj(\xi_{\pi}(\sigma))$. By Lemmas \ref{lem3.1} and \ref{lem3.2}, the top rows of $\sigma$ and $\varphi(\sigma)$ are the same. Moreover, 
  	$\maj(\varphi(\sigma))=\maj(\tau)=\maj(\xi_{\pi}(\sigma))=\maj(\sigma)$. 
  	It remains to prove that 
  	\begin{align}
  		\overline{\quadcoinv}^{\star}(\sigma)=\eta^{\circ}(\varphi(\sigma)).
  	\end{align}
    Lemmas \ref{lem3.1} and \ref{lem3.2} guarantee that  $\overline{\quadcoinv}^{\star}(\sigma)=\overline{\quadcoinv}^{\star}(\xi_{\w}(\sigma))$ and $\eta^{\circ}(\varphi(\sigma))=\eta^{\circ}(\tau)$. Therefore, it boils down to showing that 
  	\begin{align}\label{E:case41}
  		\overline{\quadcoinv}^{\star}(\xi_{\pi}(\sigma))=\eta^{\circ}(\tau).
  	\end{align}
  	
  	Let $\eta^{\circ}(\sigma_{>},\sigma_{<})$ be the number of $\S$-quadruples $(z,w,u,v)$ of distinct entries in $\sigma$ such that $\chi(z>u)\ne\chi(w>v)$ and $\Q_{\S}(z,w,u,v)=0$. Similarly,
  	let $\overline{\quadcoinv}^{\star}(\sigma_{>},\sigma_{<})$ be the number of quadruples $(z,w,u,v)$ of distinct entries in $\sigma$ such that $\chi(z>u)\ne\chi(w>v)$ and $\Q(z,w,u,v)=0$. Then by definition, 
  	\begin{align*}
  		\eta^{\circ}(\tau)&=\eta^{\circ}(\tau_{>})+\eta^{\circ}(\tau_{<})+\eta^{\circ}(\tau_{>},\tau_{<}), \\
  		\overline{\quadcoinv}^{\star}(\xi_{\pi}(\sigma))&=\overline{\quadcoinv}^{\star}(\xi_{\pi}(\sigma)_{>})+\overline{\quadcoinv}^{\star}(\xi_{\pi}(\sigma)_{<})\\
  		&\qquad+\overline{\quadcoinv}^{\star}(\xi_{\pi}(\sigma)_{>},\xi_{\pi}(\sigma)_{<}). 
  	\end{align*}
  	Since $\eta^{\circ}(\tau_{>})=\overline{\quadcoinv}^{\star}(\xi_{\pi}(\sigma)_{>})$ and $\eta^{\circ}(\tau_{<})=\overline{\quadcoinv}^{\star}(\xi_{\pi}(\sigma)_{<})$ by $\eqref{xi1}$ and $\eqref{xi2}$, the proof of (\ref{E:case41}) reduces to showing that
  	\begin{align}\label{E:case42}
  		\bar{\eta}(\tau_{>},\tau_{<})= \eta^{\circ}(\tau_{>},\tau_{<})=\overline{\quadcoinv}^{\star}(\xi_{\pi}(\sigma)_{>},\xi_{\pi}(\sigma)_{<}).
  	\end{align}
  	
  	When $\sigma$ is a two-row rectangular filling with a weakly increasing (resp. decreasing) top row, then, provided that $(z>u>v\ge w)\in\S$  (resp. $(z>v>u\ge w)\in\S$),
  	the number of $\S$-quadruples equals the number of queue inversion triples induced between descent and non-descent columns \cite[Equation (5.16)]{JL25}. That is,
  	\begin{align*}
  		\quinv(\sigma_{>},\sigma_{\le})=\eta(\sigma_{>},\sigma_{\le}).
  	\end{align*}
    Consequently, we have
    \begin{align*}
    	\overline{\quadcoinv}^{\star}(\xi_{\pi}(\sigma)_{>},\xi_{\pi}(\sigma)_{<})=	\overline{\quinv}(\xi_{\pi}(\sigma)_{>},\xi_{\pi}(\sigma)_{<})=\overline{\eta}(\xi_{\pi}(\sigma)_{>},\xi_{\pi}(\sigma)_{<}),
    \end{align*}
    and therefore (\ref{E:case42}) is equivalent to
    \begin{align}\label{E:eta_phi1}
    	\eta(\tau_{>},\tau_{<})=\eta(\xi_{\pi}(\sigma)_{>},\xi_{\pi}(\sigma)_{<}).
    \end{align}
  	Note that the top rows of $\tau_{>}$ and $\xi_{\pi}(\sigma)_{>}$, $\tau_{<}$ and $\xi_{\pi}(\sigma)_{<}$ are the same, thus $\tau_{>}$ (resp. $\tau_{<}$) can be obtained from $\xi_{\pi}(\sigma)_{>}$ (resp. $\xi_{\pi}(\sigma)_{<}$) by rearranging only the bottom rows of $\xi_{\pi}(\sigma)_{>}$ (resp. $\xi_{\pi}(\sigma)_{<}$).
  	Consequently, (\ref{E:eta_phi1}) states that the number of $\S$-quadruples formed by a mixture of a descent and an ascent columns in $\xi_{\pi}(\sigma)$ is invariant under swapping any two entries at the bottom of two descent columns or two ascent columns.
  	
  	Without loss of generality, assume that columns $(z,u)$, $(w,v)$ and $(w,u)$ are descent columns of $\xi_{\pi}(\sigma)_{>}$, and that the top row is strictly decreasing. That is, $z>w>\max(u,v)$ and $w>a$.
  	
  	\begin{center}
  		\begin{minipage}[H]{0.3\linewidth}
  			\begin{ytableau}
  				z  & \none[\dots]   & w     & \none[\dots] & a
  				\\
  				u  &\none[\dots]  & v & \none[\dots]  &  b\\
  			\end{ytableau}
  			\quad $\rightarrow$\quad
  		\end{minipage}
  		\begin{minipage}[H]{0.25\linewidth}
  			\begin{ytableau}
  				z  & \none[\dots]   & w     & \none[\dots] & a
  				\\
  				v  &\none[\dots]  & u & \none[\dots]  &  b\\
  			\end{ytableau}
  		\end{minipage}
  	\end{center}
  	
  	Let $(a,b)$ be an ascent column of $\xi_{\pi}(\sigma)_{<}$, which could be to the right of $(w,v)$, between $(z,u)$ and $(w,v)$, or to the left of $(z,u)$. Then we can show that
  	\begin{align}\label{E:Q11}
  		\CMcal{Q}_{\S}(z,a,u,b)+\CMcal{Q}_{\S}(w,a,v,b)=\CMcal{Q}_{\S}(z,a,v,b)+\CMcal{Q}_{\S}(w,a,u,b),
  	\end{align}
  	which leads to
  	\begin{align}\label{E:Q21}
  		\sum_{b>a}\CMcal{Q}(z,a,u,b)+\CMcal{Q}(w,a,v,b)=\sum_{b>a}\CMcal{Q}(z,a,v,b)+\CMcal{Q}(w,a,u,b),
  	\end{align}
  	for any fixed top-row entry $a$. The proof of (\ref{E:Q11}) requires a detailed case discussion, which was provided in \cite[Proposition 14]{JL25}. Here we omit the details.  It follows from (\ref{E:Q21}) that (\ref{E:eta_phi1}) holds.

  	Case IV: If  $\sigma=\sigma_2\sqcup\sigma_1$ is a two-row non-rectangular non-attacking filling, then we define $\varphi(\sigma)=\varphi(\sigma_2)\sqcup\sigma_1$. It is clear that the top rows of $\sigma$ and $\varphi(\sigma)$ are identical, and
  	\begin{align*}
  		(\maj,\overline{\quadcoinv}^{\star})\sigma_2=(\maj,\eta^{\circ})\varphi(\sigma_2)
  	\end{align*}
    according to Cases II and III. Since $\varphi(\sigma_2)$ is generated from $\sigma_2$ by successively swapping  two entries $u,v$ in the bottom row, such that the columns pairs $(z,u),(w,v)$ and $(z,v),(w,u)$ are all either descents or ascents. That is, $\min(z,w)>\max(u,v)$ or $\max(z,w)<\min(u,v)$. It follows that $\CMcal{Q}(z,u,v)=\CMcal{Q}(w,u,v)$, thus implying that the number of non-queue inversion triples between $\sigma_2$ and $\sigma_1$, say $a$, is invariant. Consequently,
  	\begin{align*}
  		(\maj,\overline{\quadcoinv}^{\star})\sigma&=(\maj,\overline{\quadcoinv}^{\star})\sigma_2+(\maj,\overline{\quadcoinv}^{\star})\sigma_1+a \\
  		&=(\maj,\eta^{\circ})\varphi(\sigma_2)+(\maj,\eta^{\circ})\sigma_1+a\\
  		&=(\maj,\eta^{\circ})\varphi(\sigma).
  	\end{align*}
  	Therefore,  the bijection $\varphi$ satisfies  \eqref{eqthm6.1a}. 
  	
  For Case II to IV, since $\varphi$ (as a composition of the operators $\xi_{i}$ and $\delta_i$) preserves the number of (un)restricted boxes by Lemma \ref{lem3.2} and Definition \ref{Def3.1},  (\ref{eqthm6.1b}) follows from Remark \ref{R:1}. This completes the proof. 
  \end{proof}
Finally, let us introduce sorted tableaux to establish (\ref{E:main_comp}).
\begin{definition}[Sorted tableaux]\cite[Definition 6]{JL25}\label{Def:sorted}
	Let $\sigma=\sigma_{\lambda_1} \sqcup \cdots \sqcup \sigma_1$ be a non-attacking tableau, where each $\sigma_j$ is a rectangular filling of height $j$. Then $\sigma$ is called sorted if each $\sigma_j$ is sorted in the sense defined below.
	
	For a rectangular partition $\lambda=(n^m)$ and any tableau $\sigma\in \dg'(\lambda)$, we say that 
	$\sigma$ is sorted if it is a top-row increasing tableau satisfying the following condition:
	\begin{itemize}
		\item For any $1\le r<n$ and for any two descent columns $\young(a,c)$ and $\young(b,d)$ in  $(\sigma\vert_{r}^{r+1})_{>}$ with $a$ to the left of $b$, we require that $a>b$ implies $c>d$, and $a<b$ implies $c<d$. In other words, if the upper entries $a,b$ are arranged in decreasing order from left to right, then the corresponding lower entries $c,d$ must also be decreasing.
	\end{itemize}
For example, the tableau $\sigma$ below is a sorted tableau of shape $\lambda=(3^6)$.
	\begin{align*}
	\sigma=\begin{ytableau}[] 3 &5& 7& 8& 10& 13\\  2 &5& 6& 11& 9& 12 \\ 2 &1 & 8& 4& 3& 7
	\end{ytableau}
\end{align*}

\end{definition}
{\em Proofs of Theorem \ref{T:main1} and Theorem \ref{cor:1}}. Since (\ref{eqthm3.3}) follows from (\ref{eqthm2.2}) and Theorem \ref{thm3}, it suffices to prove (\ref{E:main_comp}). 
The coefficient $d_{\sigma}(t)$ for a sorted non-attacking tableau $\sigma$ is defined as follows. Let $\sigma=\sigma_{\lambda_1} \sqcup \cdots \sqcup \sigma_1$, where $\sigma_j$ is a rectangular filling of height $j$ and $n$ is the largest entry of $\sigma$. We then define a sequence $\nu_{i,j}=(\nu_{i,j}^{1}\le \cdots\le \nu_{i,j}^{n})$ of non-negative integers by
\begin{align}
	\nu_{i,j}^{1}&=\# \, 1's \textrm{ in row } i \textrm{ of } \sigma_j,\label{E:v1}\\
	\nu_{i,j}^{k}-\nu_{i,j}^{k-1}&=\# \, k's \textrm{ in row } i \textrm{ of } \sigma_j,\label{E:v2}
\end{align}
for any $1<k\le n$. Next, define the sequence $s=\{s_h^k\}_{0\leq k<n,\,k\leq h\le n}$ from  $\sigma_j\vert_{i}^{i+1}$ as follows:
\begin{align*}
	s_{k}^{k}&=\#\,\textrm{ descent columns } (a,b) \textrm{ in } \sigma_j\vert_i^{i+1}
	\textrm{ such that } k+1=a>b,\\
	s_{h}^k-s_{h-1}^k&=\#\,\textrm{ non-descent columns } (a,b) \textrm{ in } \sigma_j\vert_i^{i+1}
	\textrm{ such that } k+1=a\le b=h.
\end{align*}
Consequently, $s_{n}^k=\nu_{i+1,j}^{k+1}-\nu_{i+1,j}^{k}$ counts the occurrences of $k+1$ in $\sigma_j\vert_{i+1}^{i+1}$. Finally, define
\begin{align}\label{E:dsigma}
	d_{\sigma}(t)&=d_{\sigma_{\lambda_1}}(t)\cdots d_{\sigma_1}(t),\,\,\mbox{ where }\,\\
	d_{\sigma_j}(t)&=\prod_{i<j}\prod_{0\le k<n}{\nu_{i,j}^{k+1}-s_{k+1}^{0,k}\brack \nu_{i,j}^{k}-s_{k}^{0,k}}_t \notag
\end{align}
and we set $d_{\sigma_j}(t)=1$ whenever $j$ is not a part of $\lambda$ (i.e., $\sigma_j=\varnothing$). 
We claim that 
\begin{align}\label{E:compact1}
\sum_{\tau \textrm{ non-attacking}\atop \textrm{top-row increasing}}q^{\maj(\tau)}t^{{\eta}^{\circ}(\tau)}
=\sum_{\sigma\,\textrm{sorted}\atop \textrm{non-attacking}}d_{\sigma}(t)\,q^{\maj(\sigma)}t^{{\eta}^{\circ}(\sigma)}.
\end{align}
Following Definition \ref{Def:eta1}, let $\overline{\S}$ denote the set obtained from $\S$ by replacing each quadruple $(z,w,u,v)$ with $(z,w,v,u)$. If $(z>w>u>v)\in{\S}$, then $(z>w>v>u)\in\overline{\S}$. By (2) of Definition \ref{Def:setS}, we have $\eta_{\S}(\sigma)+\eta_{\overline{\S}}(\sigma)=n(\lambda)$. Moreover, if $\sigma$ is a rectangular filling, then $\eta_{{S}}(\sigma)=\eta(\sigma)$, thus $\eta_{\overline{\S}}(\sigma)=\overline{\eta}(\sigma)$. For simplicity, we set $\tau^*=\rev(\tau)$. 

If $(z>w>u>v)\in{\S}$, then the compact formula for rectangular non-attacking and top-row increasing tableaux $\tau$  in \cite[Equation (7.4)]{JL25} yields
\begin{align*}
	\sum_{\tau \textrm{ non-attacking}\atop \textrm{top-row increasing}}q^{\maj(\tau)}t^{\overline{\eta}(\tau)}
	=\sum_{\sigma\,\textrm{sorted}\atop \textrm{non-attacking}}d_{\sigma^*}(t)\,q^{\maj(\sigma^*)}t^{\overline{\eta}(\sigma^*)}
	=\sum_{\sigma\,\textrm{sorted}\atop \textrm{non-attacking}}d_{\sigma}(t)\,q^{\maj(\sigma)}t^{\overline{\eta}(\sigma)}.
\end{align*}
The last equality holds because $(\maj,\eta)(\sigma)=(\maj,\eta)(\sigma^*)$ and $d_{\sigma}(t)=d_{\sigma^*}(t)$ for rectangular fillings $\sigma$. 
Since the number of restricted boxes is invariant under $\delta_i^r$, Remark \ref{R:1} gives $c_{\sigma}(q,t)=c_{\tau}(q,t)$. Moreover, by (1) of  Definition \ref{Def:setS}, we have $\alpha(\sigma^*)=\alpha(\tau)$, which in turn implies  $\alpha(\sigma)=\alpha(\tau)$. Consequently, (\ref{E:compact1}) holds for rectangular fillings $\sigma$. 
Observing that the reverse operator $\rev$ and the flip operator $\delta_i^r$ preserve the number of queue inversion triples located in two columns of unequal height, (\ref{E:compact1}) extends to non-rectangular fillings $\sigma$ as well. Combined with $c_{\sigma}(q,t)=c_{\tau}(q,t)$ and Remark \ref{R:1}, this concludes the proof of (\ref{E:main_comp}).

Following the line of argument developed in \cite[Section 8]{JL25}, we can translate (\ref{E:main_comp}) into an explicit $(q,t)$-formula (\ref{E:vsP}) for the coefficient $c_{\lambda\mu}(q,t)=[m_{\mu}]P_{\lambda}$. Here we only define the relevant notations in (\ref{E:vsP}). Let
\begin{align*}
	(a;q)_{n}=\prod_{k=1}^{n}(1-aq^{k-1}), \quad (a;q)_\infty=\prod_{k=1}^\infty (1-aq^{k-1}).
\end{align*}
For any positive  sequences of nonnegative integers $\nu=(\nu^1 \leq \cdots \leq \nu^n)$  and $\tilde{\nu}=(\tilde{\nu}^1 \leq \cdots \leq \tilde{\nu}^n)$ such that $\tilde{\nu}^n=\nu^n$,
we define
\begin{align}\label{E:phi}
	\phi_{\nu| \tilde{\nu}}(a,b,c)=\sum_{s}b^{\xi(s,\tilde{\nu},\nu)} &\prod_{0\le k<n} a^{ s_{n}^{k}-s_{k}^{k} }
	{\tilde{\nu}^{k+1}-s_{k+1}^{0,k} \brack \tilde{\nu}^{k}-s_{k}^{0,k} }_{b^{-1}}
	b^{( \tilde{\nu}^{k+1}-s_{k+1}^{0,k} -\tilde{\nu}^{k}+s_{k}^{0,k}) (\tilde{\nu}^{k}-s_{k}^{0,k})}    \notag\\
	&\qquad\times\frac{(1-b^{-1})^{c-\sum_{h}(s_{h+1}^h-s_h^h)}}{(a^{-1}b^{-1};b^{-1})_{c-\sum_{h}(s_{h+1}^h-s_h^h)}},
\end{align}
which is summed over all sequences $\{s_k^i \}_{0\le i<n,i\leq k\leq n}$ such that
\begin{align}\label{E:defs2}
	0=s_0^0\leq s_{k}^{k}\leq s_{k+1}^{k}\leq \cdots \leq s_{n-1}^{k}\leq s_{n}^{k}=\nu^{k+1}-\nu^{k},
\end{align}
with $s_{i}^{j,k}=\sum_{a=j}^{k}s_{i}^{a}$, and
\begin{align*}
	\xi(s,\tilde{\nu},\nu)&=\sum_{0\leq h\leq k<n} (s_{k+1}^{h}-s_{k}^{h})  (\tilde{\nu}^{k}-s_{k}^{0,k}+s_h^{0,h-1}+\nu^h-s_{k+1}^{0,h-1})\\
	&\qquad +\sum_{0\leq h<k<n} (s_{h+1}^{h}-s_{h}^{h})  (\tilde{\nu}^{k+1}-s_{k+1}^{0,k}-\tilde{\nu}^{k}+s_k^{0,k}).
\end{align*}
The sum in (\ref{E:vsP}) is taken over sequences $\nu_{i,j}=(\nu_{i,j}^{1} \le \cdots \le \nu_{i,j}^{n})$ of nonnegative integers, $1\le i\le j\le \lambda_1$, satisfying
\begin{align}
	\label{eq5}\nu_{i,j}^{n}=\lambda_{j}'-\lambda_{j+1}', &\qquad \textrm{ for all }\,\,1\le i\le j\le \lambda_1,   \\
	\label{eq6}\sum_{1\le i\le j\le \lambda_1} \nu_{i,j}^{k}=\mu_1+\cdots+\mu_k, &\qquad \textrm{ for all }\,\, 1\le k\le n,\\
	\label{eq7}\sum_{i\le j\le \lambda_1}\nu_{i,j}^{k+1}-\nu_{i,j}^k=0 \ \text{or} \ 1, &\qquad \textrm{ for all }\,\, 0\le k\le n-1, 1\le i\le \lambda_1,\\
	\label{eq8}   (\nu_{i+1,j}^{k+1}-\nu_{i+1,j}^k)+(\nu_{i,\ell}^{k+1}-\nu_{i,\ell}^k)<2, &\qquad \textrm{ for all }\,\, j>\ell,\\
	\label{eq9}    \textrm{ if }\ \nu_{i+1,j}^{k+1}-\nu_{i+1,j}^k=\nu_{i,j}^{k+1}-\nu_{i,j}^k=1,  &\,\textrm{ then }\ s_{k+1}^k-s_k^k=1.
\end{align}
in which  \eqref{eq7} -- \eqref{eq9}  describe the non-attacking condition.

\qed


\begin{example}\label{Example:main2}
	We continue with Example \ref{Example:mainT}. Since there is only one sorted tableau $\sigma$ in this example, we have $d_{\sigma}(t)={2\brack 1}_t {3\brack 2}_t$. The compact formula (\ref{E:main_comp}) then shows that the contribution of the six non-attacking and top-row increasing tableaux in $[\sigma]$ is
	\begin{align*}
		q^3 {2\brack 1}_t {3\brack 2}_t\frac{(1-t)^3x_1 x_2 x_3 x_4 x_5 x_6}{(1-qt)(1-qt^2)(1-qt^3)}.
	\end{align*}
In other words, the polynomial $1+2t+2t^2+t^3$ is captured by the multiplicity $d_{\sigma}(t)$.
\end{example}

\begin{example}
	Take $n=6$ and $\lambda=(2,2,2)$, then $n(\lambda)=6$, $n(\lambda')=3$ and $\sum_j\binom{\lambda_j'-\lambda_{j+1}'}{2}=3$. The sum in \eqref{E:vsP} involves only two sequences: $\nu_{1,2}$ and $\nu_{2,2}$. Continuing with $\sigma$ in Example \ref{Example:mainT},  we define $\nu_{1,2}=(1,2,3,3,3,3)$ and $\nu_{2,2}=(0,0,0,1,2,3)$ by (\ref{E:v1}) and (\ref{E:v2}).  
	
	This sequence $\{\nu\}$ satisfies \eqref{eq7}--\eqref{eq8} and by \eqref{eq6} we have $x^{\mu}=x_1 x_2 x_3 x_4 x_5 x_6$.
	Setting $i=1,j=2$,  then $a=q^{-(j-i)}t^{-(\lambda_i'-\lambda_j')}=q^{-1}, b=t^{-1}, c=\lambda_j'-\lambda_{j+1}'=3$ in (\ref{E:phi}) and we obtain
	\begin{align}\label{vsp1}
		\phi_{\nu_{2,2}| \nu_{1,2}}(q^{-1},t^{-1},3)
		&=\sum_{s}t^{-\xi(s,\nu_{1,2},\nu_{2,2})} \prod_{0\le k<6} q^{ -(s_{6}^{k}-s_{k}^{k}) }
		t^{-(\nu_{1,2}^{k+1}-s_{k+1}^{0,k} - \nu_{1,2}^{k}+s_{k}^{0,k}) (\nu_{1,2}^{k}-s_{k}^{0,k})} \notag\\
		&\qquad\times { \nu_{1,2}^{k+1}-s_{k+1}^{0,k} \brack \nu_{1,2}^{k}-s_{k}^{0,k} }_{t} \frac{(1-t)^{m}}{(qt;t)_{m} },
	\end{align}	
	where $m=3-\sum_{h}(s_{h+1}^h-s_h^h)$. By (\ref{E:defs2}) and (\ref{vsp1}), only one sequence $\{s_k^i \}_{0\le i<6,i\leq k\leq 6}$ makes a non-zero contribution to the sum in $\phi$; that is, 
	\[\left[
	\begin{array}{ccccccc}
		s_0^0 & s_1^0 & s_2^0  &s_3^0 &  s_4^0 & s_5^0 &  s_6^0\\
		& s_1^1 & s_2^1  &s_3^1 &  s_4^1 & s_5^1 &  s_6^1\\
		&       & s_2^2  &s_3^2 &  s_4^2 & s_5^2 &  s_6^2\\
		&       &        &s_3^3 &  s_4^3 & s_5^3 &  s_6^3\\
		&       &        &      &  s_4^4 & s_5^4 &  s_6^4\\
		&       &        &      &        & s_5^5 &  s_6^5\\
	\end{array} \right]
	=
	\left[
	\begin{array}{ccccccc}
		0    &   0& 0  &0 &  0 & 0 &  0\\
		&   0 & 0  &0 &  0 & 0 &  0\\
		&       & 0  & 0 &  0 & 0 &  0\\
		&       &        & 1 &  1 & 1 &  1\\
		&       &        &      &  1 & 1 &  1\\
		&       &        &      &        & 1 &  1\\
	\end{array} \right]. \]
	It follows that the contribution of $\nu_{1,2}$ and $\nu_{2,2}$ to the sum in $c_{\lambda\mu}(q,t)$ is
	\begin{align*}
		&x_1 x_2 x_3 x_4 x_5 x_6 q^3 q^{n(\lambda')}t^{n(\lambda)-\chi(\nu)-\sum_j\binom{\lambda_j'-\lambda_{j+1}'}{2}}\phi_{\nu_{2,2}| \nu_{1,2}}(q^{-1},t^{-1},3) \\
		&= q^3 { 2 \brack 1 }_{t} { 3 \brack 2 }_{t} \frac{(1-t)^{3}x_1 x_2 x_3 x_4 x_5 x_6}{(1-qt)(1-qt^2)(1-qt^3)},
	\end{align*}
	which agrees with  Example \ref{Example:main2}.
\end{example}

\section{Inversion analogue of Theorem \ref{T:main1}}\label{S:remark}

This section presents an analogue of Theorem \ref{T:main1} in Theorem \ref{T:Ptin} via two superization formulas in Proposition \ref{propquinvJ}.

Recall that Theorem \ref{T:eta} is established for sixteen statistics in the set $\A$. Other than the eight statistics in $\A^+$ (see Lemma \ref{L:setSi} and Definition \ref{Def:eta1}), the remaining eight statistics in $\A$ are to be defined. Here we give new combinatorial formulas of $P_{\lambda}(X;q,t)$ with respect to the eight statistics $\A-\A^+$.

\begin{definition}[A family $\A$ of statistics]\cite[Definition 4]{JL25}\label{Def:eta3}
	For any filling $\sigma=\sigma_{\lambda_1} \sqcup \cdots \sqcup \sigma_1\in\T(\lambda')$, let $N$ denote the largest entry of $\sigma$. For any $1\le j\le \lambda_1$, let $\sigma_j'$ be obtained from $\sigma_j$ by replacing every entry $x$ with $x^c=N+1-x$, and then flipping it vertically. Define $\sigma'=\sigma_{\lambda_1}' \sqcup \cdots \sqcup \sigma_1'$ and 
	\begin{align}\label{E:eta*}
		\eta^*(\sigma')=\eta_{\S}(\sigma)+\sum_{(a,b,c)\in\sigma'}\Q^*_{\S}(a,b,c),
	\end{align}
	where $\S$ is a $\quinv$-quadruple set from Lemma \ref{L:setSi}, and $\Q^*_{\S}(a,b,c)=1$ if $(a,b,c)$ is an inversion triple of $\sigma'$ in which the column containing $(a,b)$ is longer than the column containing $c$; otherwise $\Q^*_{\S}(a,b,c)=0$. Finally, define 
	\begin{align*}
		\A=\A^+\,\dot\cup\,\{\eta^*:\eta\in\A^+\}.
	\end{align*}
\end{definition}
Let $\eta^{\bullet}(\sigma)=\overline{\eta^{*}}(\sigma)-\alpha(\sigma')$. From (\ref{E:eta*}), (\ref{E:etai}) and (\ref{E:eta2}), we obtain 
\begin{align}\label{E:eta_bullet}
	\eta^{\bullet}(\sigma')&=\eta^{\circ}(\sigma)+\sum_{(a,b,c)\in\sigma'}(1-\Q^*_{\S}(a,b,c))-\sum_{(a,b,c)\in\sigma}(1-\Q_{\S}(a,b,c)).
\end{align} 
For all non-attacking rectangular tableaux $\sigma$, it follows from (\ref{E:eta_bullet}) that $\eta^{\bullet}(\sigma')=\eta^{\circ}(\sigma)$.

Similarly, the statistic $\quadcoinv$ on super fillings (see Definition \ref{Def2.2}) has an inversion-counterpart $\quadinv$; the corresponding non-attacking tableaux are defined as follows.

\begin{definition}[Statistic $\quadinv$]\label{Def2.2a}
	For a super filling $\sigma$, consider any quadruple $(z,w,u,v)$ of entries in $\hat{\sigma}$, located as shown in Figure \ref{F:f1}. Assume the columns containing $(z,u)$ and $(w,v)$ are of equal height in $\sigma$.
		\begin{enumerate}
			\item If  $|z|=|u|$ and $|z|,|v|,|w|$ are distinct, then  $(z,w,u,v)$ is a quadruple inversion if and only if $z\in \mathbb{Z_{+}}$.
			\item If  $|w|=|u|$, $|z|\ne|w|$ and $|v|\ne|w|$, then  $(z,w,u,v)$ is a quadruple inversion if and only if $w\in \mathbb{Z_{-}}$.
			\item If neither  (1) nor  (2) holds,  then $(z,w,u,v)$ is a quadruple inversion  if and only if  $\Q(w,v,z)=1$.
		\end{enumerate}
		Otherwise, the column containing $(z,u)$ is longer than the column containing $(w,v)$. In this case, $(z,w,u,v)$ is a quadruple inversion if and only if $(z,u,w)$ is an inversion triple. Equivalently, this holds if and only if exactly one of the following conditions is satisfied:
		$$\{I(z,u)=1,  I(w,u)=0,   I(z,w)=0  \}.$$
		Define $\Q(z,w,u,v)=1$ if $(z,w,u,v)$ is a quadruple inversion; and $\Q(z,w,u,v)=0$ otherwise.
	Let $\quadinv(\sigma)$ be the number of quadruple inversions in $\sigma$.
\end{definition}

For all non-attacking rectangular tableaux $\sigma$, the triple $(z,u,w)$ is an inversion triple in $\sigma$, if and only if $(u^c,z^c,w^c)$ is a queue inversion triple in $\sigma'$. Hence,  $\overline{\quadinv}(\rev(\sigma))=\overline{\quadcoinv}(\sigma')$.

\begin{definition}[Dual non-attacking fillings]
	Given the Young diagram $\dg'(\lambda)$, two boxes $u,v\in \dg'(\lambda)$ are said to be {dual non-attacking} each other if either
	\begin{enumerate}
		\item[$\mathrm{(I)}$]  $u=(i,j)$, $v=(i,k)$, where $j\ne k$; or
		\item[$\mathrm{(II)}$]  $u=(i,j)$, $v=(i-1,k)$, where $k<j$; or
		\item[$\mathrm{(III)}$] $u=(i,j)$, $v=(i-1,k)$ where $k>j$ such that $\lambda_j=\lambda_{k}$.
	\end{enumerate}
	A dual non-attacking filling $\sigma$ is a filling that contains no two dual attacking boxes $u,v$ satisfying $|\sigma(u)|=|\sigma(v)|$. When the diagram $\dg'(\lambda)$ is a rectangle, dual-non-attacking fillings coincide with non-attacking fillings.
\end{definition}

Following the line of argument developed in Sections \ref{S:roadmap} -- \ref{S:7}, we can prove the corresponding formulas for the statistics in $\A - \A^+$.
\begin{theorem}\label{T:Ptin}
	For a partition $\lambda$, let $\vartheta\in \{\overline{\quadinv},\eta^{\bullet}\}$. Then,
	\begin{align} \label{eqT:Ptin}
		P_{\lambda}(X;q,t)&=\frac{\prod_{i\ge 1}[m_i]_t!}{b_{\lambda}(q,t)}
		\sum_{ \tau\, \textrm{ dual non-attacking} \atop \textrm{and bottom-row  increasing} } q^{\maj(\tau)} t^{\vartheta(\tau)} x^{\tau}  \notag\\
		&\qquad \times\prod_{\tau(u)\ne \tau(\South(u))\atop \text{ or}\, u \in\, \text{row }1}(1-t) \prod_{\tau(u)=\tau(\South(u)) \atop \text{ and}\,u \not\in\, \text{row }1} (1-q^{\leg(u)+1} t^{1+\arm'(u)} ),
	\end{align}
	where $b_{\lambda}(q,t)$ is defined in (\ref{E:bla2}) and for $u=(i,j)$, 
		\begin{align*}
			\arm'(u)&=|\{(i,k)\in \dg'(\lambda): k<j  \textrm{ and } \lambda_{k}=\lambda_j   \}| \\
			&\qquad + |\{(i,k)\in\dg'(\lambda): k>j, \, \lambda_{j}=\lambda_k \textrm{ and } (i,k)\  \textrm{is unrestricted} \textrm{ or } \lambda_{j}>\lambda_k   \}|.
	\end{align*}
\end{theorem}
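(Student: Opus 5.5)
We will mirror the three-step proof of Theorem \ref{T:main1}, replacing the queue-inversion superization by the inversion superization (\ref{E:supJ2}) with $\vartheta=\inv$, and replacing $\quadcoinv$ by $\quadinv$.

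\textbf{Step 1: compare inversions with $\quadinv$.} We first prove an analogue of Theorem \ref{thm11}. Its left side is a sum over super fillings $\sigma$ with $|\sigma|$ dual non-attacking, weighted by $t^{p(\sigma)+\quadinv(\sigma)}$. Its right side is the corresponding sum over $|\sigma|$ $\inv$-non-attacking, weighted by $t^{p(\sigma)+\inv(\sigma)}$.

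The cleanest route is to avoid redoing the flip-operator analysis and transport Theorem \ref{thm11} through the duality $\sigma\mapsto\sigma'$ of Definition \ref{Def:eta3}, which complements entries and flips each rectangle $\sigma_j$ vertically. Three facts about this duality are needed:
\begin{itemize}
\item for rectangles it exchanges $\inv$-triples and $\quinv$-triples, so $\overline{\quadinv}(\rev(\sigma))=\overline{\quadcoinv}(\sigma')$;
\item it swaps $\quinv$-attacking configurations (II) with $\inv$-attacking ones, and non-attacking configurations (III) with the dual (III);
\item it preserves $\maj$ up to the relabelling of legs.
\end{itemize}

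The vertical flip does not respect $\maj$ on non-rectangular shapes, since legs are measured from the top. So instead I expect to reprove the statement directly. I would define an inversion analogue of $\xi_i^r$ that starts at row $r$ and proceeds upward, exactly as $\zeta_i^r$ does, with terminating condition $\Q(a,b,c,d)=\Q(b,a,c,d)$ for $\quadinv$. I would then prove the analogue of Lemma \ref{lem3.2}: the operator preserves $\maj$ and dual non-attackingness, and changes $\quadinv$ only in the starting square. Using $\zeta_i$ in place of $\rho_i$, the separable/inseparable splitting of Theorems \ref{thm5.1} and \ref{thm5.2} then carries over verbatim.

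\textbf{Step 2: compress to positive fillings.} For a fixed dual non-attacking $\tau$, sum over all signings $\sigma$ with $|\sigma|=\tau$, as in the proof of (\ref{eqthm1.2}).
\begin{itemize}
\item An unrestricted box contributes the factor $(t^{-1}-1)$.
\item A negative entry in a restricted box $s$ adds $\leg(s)+1$ to $\maj$. It also changes $\quadinv$ by the number of quadruples or inversion triples in which $s$ sits in the upper corner and whose outcome depends on the sign. By Definition \ref{Def2.2a}(1),(3) and the inversion-triple rule for longer columns, this number is exactly $\arm'(s)$: equal-height columns to the left always count, equal-height columns to the right count only when unrestricted, and shorter columns to the right always count.
\end{itemize}
This yields a formula for $J_\lambda$ with the factors $(1-t)$ and $(1-q^{\leg+1}t^{1+\arm'})$. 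Unlike (\ref{E:b1}), the exponents $\arm'$ on restricted boxes do not pair off against $b_\lambda$, which is why $b_\lambda$ survives in (\ref{eqT:Ptin}).

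To pass to bottom-row increasing fillings, sort the bottom row of each rectangle using the inversion flip operators paired with PDS permutations. Arguing as in (\ref{E:sort1})–(\ref{E:sort2}) and using (\ref{E:mit}), this produces the factor $\prod_i[m_i]_t!$. Because the factor attached to each restricted box now depends on its position, I must also check that this weight is invariant under the sorting flips.

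\textbf{Step 3: equidistribution for $\eta^\bullet$.} Replace $\overline{\quadinv}$ by $\eta^\bullet$ via a bijection analogous to Theorem \ref{thm6.1}. On rectangles, use $\eta^\bullet(\sigma')=\eta^\circ(\sigma)$ together with the identity $\overline{\quadinv}(\rev\sigma)=\overline{\quadcoinv}(\sigma')$ to transport Theorem \ref{thm6.1} through the duality. For non-rectangular fillings, glue rectangles as in Case IV, using that bottom-row swaps among same-type columns preserve inversion triples across columns of unequal height.

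\textbf{Main obstacle.} I expect the main obstacle to be the flip-operator lemma for $\quadinv$, including the verification that dual non-attackingness is preserved and the computation of $\arm'$ in Step 2.
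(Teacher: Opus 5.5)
Your three-step architecture is the paper's own, since the paper only asserts that Sections~\ref{S:roadmap}--\ref{S:7} carry over. Your Step~2 count of $\arm'$ is correct. The positional worry you flag also dissolves. In a row of a rectangle of width $m$ with $c$ restricted boxes, a restricted box having $r$ restricted boxes to its left has $\arm'=s+m-c+r$, where $s$ is the number of boxes of that row in shorter columns. So the exponents depend only on $c$.

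\textbf{The sorting step fails as written.} The inversion flip operators $\zeta_w$ do not preserve dual non-attackingness. Take $\lambda=(2,2)$ and $\tau(1,1)=\tau(2,1)=1$, $\tau(1,2)=2$, $\tau(2,2)=3$; this is dual non-attacking with increasing bottom row. Since $\Q(1,1,3)=\Q(1,2,3)=1$, $\zeta_1$ stops at $k=1$ and swaps only the bottom row. This leaves the entry $1$ in both $(2,1)$ and $(1,2)$, which condition (III) of dual non-attackingness forbids. The only admissible filling with these row contents and bottom row $(2,1)$ has top row $(3,1)$.

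You must instead sort with a $\quadinv$-adapted flip, just as the paper uses $\xi_w$ rather than $\rho_w$ in \eqref{eqthm2.2}. Here Definition~\ref{Def2.2a}(1) gives $\Q(1,3,1,2)=1$, while after swapping the lower row rule (3) gives $\Q(3,1,1)=0$. So the flip continues and swaps both rows. As in Lemma~\ref{lem3.2}, such a flip stops only at squares with four distinct absolute values. That is what preserves dual non-attackingness and the number of restricted boxes per row.

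\textbf{Step~1 is not ``verbatim with $\zeta_i$ for $\rho_i$''.} Rule (3) of Definition~\ref{Def2.2a} uses the mirrored triple $\Q(w,v,z)$, not the inversion triple $\Q(z,u,w)$; this is exactly why $\arm'$ sees all equal-height columns to the left. For $z=1,w=2$ above $u=3,v=4$ one has $\Q(2,4,1)=1\ne 0=\Q(1,3,2)$, so the analogue of \eqref{eqprop5.2b} is false.

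Under dual non-attackingness, rule (2) never fires, and rule (1) fires exactly when a restricted column lies left of an unrestricted one. So the normal form must put restricted columns on the \emph{right}, and there $\quadinv$ equals $\inv\circ\rev$. You should therefore first rewrite the $\inv$-side via $\rev$, which preserves $\maj$, $p$, $m$ and $x^{|\sigma|}$ and mirrors the $\inv$-attacking condition inside each rectangle. Then run the mirrored argument.

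For an upward sweep, the stopping test must be invariance under swapping the \emph{lower} row, $\Q(z,w,u,v)=\Q(z,w,v,u)$. The test $\Q(a,b,c,d)=\Q(b,a,c,d)$ does not control the top boundary square.

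\textbf{Step~3 needs an extra fact about $\maj$.} The map $\sigma\mapsto\sigma'$ does not preserve $\maj$ even on rectangles: in height $h$, a descent of weight $h-i$ becomes one of weight $i$. The transport $\tau\mapsto\rev\bigl((\varphi((\rev\tau)'))'\bigr)$ still works, but only for two reasons. First, the bijection of Theorem~\ref{thm6.1} preserves the number of descents between every pair of adjacent rows. Second, $\eta^{\bullet}\circ\rev=\eta^{\bullet}$ on non-attacking rectangles, which follows from Definition~\ref{Def:setS}(1) because $\eta^{\circ}$ ignores quadruples with repeated entries.
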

\begin{corollary}[A compact version of (\ref{eqT:Ptin})]
	For a partition $\lambda$,
	\begin{align*}
		\sum_{ \tau \ \textrm{dual non-attacking}  \atop \textrm{and bottom-row increasing}} q^{\maj(\tau)} t^{\eta^{\bullet}(\tau)}=\sum_{ \sigma\, \textrm{dual non-attacking} \atop\rev(\sigma)'\textrm {sorted non-attacking}  }  d_{\sigma'}(t)q^{\maj(\sigma)} t^{\eta^{\bullet}(\sigma)},
	\end{align*}
	where $\sigma'$ is as defined in Definition \ref{Def:eta3}.
	Consequently,  	
	\begin{align*}
		P_{\lambda}(X;q,t)&=\frac{\prod_{i\ge 1}[m_i]_t!}{b_{\lambda}(q,t)}
		\sum_{\sigma\, \textrm{dual non-attacking} \atop\rev(\sigma)'\textrm {sorted non-attacking} }  d_{\sigma'}(t)q^{\maj(\sigma)} t^{\eta^{\bullet}(\sigma)} x^{\sigma}  \notag\\
		&\qquad \times\prod_{\sigma(u)\ne \sigma(\South(u))\atop \text{ or}\, u \in\, \text{row }1}(1-t) \prod_{\sigma(u)=\sigma(\South(u)) \atop \text{ and}\,u \not\in\, \text{row }1} (1-q^{\leg(u)+1} t^{1+\arm'(u)} ).
	\end{align*}
	
\end{corollary}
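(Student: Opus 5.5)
The plan is to reduce the corollary to the compact formula (\ref{E:compact1}) via the duality $\sigma\mapsto\sigma'$ of Definition \ref{Def:eta3}, composed with the reversal $\rev$. Then we substitute the result into Theorem \ref{T:Ptin}. The map $\tau\mapsto \rev(\tau)'$ is a bijection on fillings of $\dg'(\lambda)$: complement every entry $x\mapsto N+1-x$, flip each rectangle vertically, and reverse the rows. Two things must be checked.

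\emph{Shapes of the sets.} First, $\tau$ is dual non-attacking and bottom-row increasing if and only if $\rev(\tau)'$ is non-attacking and top-row increasing.
\begin{itemize}
\item The vertical flip turns the bottom row of $\tau_j$ into the top row.
\item Complementation reverses order and $\rev$ reverses it again, so weak increase is preserved.
\item For attacking conditions (II)/(III) versus dual (II)/(III): the vertical flip exchanges rows $i$ and $i-1$, and $\rev$ exchanges $k<j$ with $k>j$ within a rectangle of equal heights. Together these carry the dual attacking patterns exactly to the attacking ones.
\item For unequal heights, I would check on the diagram that the relative column order across rectangles is handled consistently. This is the point where the choice of $\rev$ composed with $'$ (rather than $'$ alone) matters.
\end{itemize}

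\emph{Statistics.} Second, $\maj(\tau)=\maj(\rev(\tau)')$ and $\eta^{\bullet}(\tau)=\eta^{\circ}(\rev(\tau)')$ up to a correction that is invariant on each row-equivalence class.
\begin{itemize}
\item For $\maj$: the flip reverses each column, and complementing undoes the reversal of $I$. Since every column within a rectangle has equal height, the weight $\leg+1$ is symmetric in the sense used in \cite{JL25}. I would verify that descents at height $r$ go to descents at the mirrored height and that the total weight matches. This follows the argument already used in \cite[Section 7]{JL25} for the $\eta^*$ statistics.
\item For the second statistic, (\ref{E:eta_bullet}) expresses $\eta^{\bullet}(\sigma')$ as $\eta^{\circ}(\sigma)$ plus the difference of non-$\Q^*_{\S}$ and non-$\Q_{\S}$ triples between unequal columns. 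On rectangles the difference vanishes. In general, I expect it to be invariant under the $\delta_i^r$ and $\rev$ operations used in the compact formula, because these preserve triples in columns of unequal height, as already observed at the end of the proof of (\ref{E:main_comp}).
\end{itemize}

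Granting both checks, the left side of the corollary becomes $\sum q^{\maj}t^{\eta^\circ}$ over non-attacking top-row increasing tableaux. By (\ref{E:compact1}) this equals the sum over sorted non-attacking $\rho$ of $d_\rho(t)q^{\maj(\rho)}t^{\eta^\circ(\rho)}$. Pulling back via $\sigma=(\rev)^{-1}(\rho')^{-1}$, i.e. $\rho=\rev(\sigma)'$, gives the weight $d_{\sigma'}(t)$. Here $d$ is unchanged under $\rev$, as noted for rectangles, so $d_{\rev(\sigma)'}=d_{\sigma'}$. This yields the first identity.

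\emph{Second identity.} The compressed expression for $P_\lambda$ follows by inserting the first identity into (\ref{eqT:Ptin}) with $\vartheta=\eta^{\bullet}$. This requires that the product of factors $(1-t)$ and $(1-q^{\leg+1}t^{1+\arm'})$ is constant on each fibre of the compression. The number of restricted boxes in each row is preserved by $\delta_i^r$, and the analogue of Remark \ref{R:1} holds for $\arm'$: the multiset of $\arm'$-values of unrestricted boxes in a row of a rectangle does not depend on their positions. So the weights factor out exactly as in the proof of (\ref{E:main_comp}).

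\emph{Main obstacle.} I expect the main difficulty to be the exact bookkeeping of the triple correction in (\ref{E:eta_bullet}) for non-rectangular shapes. The task is to confirm that the sorting moves in (\ref{E:compact1}) do not change it. I would first test this on two-row shapes with two rectangles, where it reduces to showing $\Q(z,u,v)=\Q(w,u,v)$ for swapped bottom entries as in Case IV of Theorem \ref{thm6.1}.
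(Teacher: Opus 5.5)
Your plan, to transport through $\tau\mapsto\rev(\tau)'$ and then invoke \eqref{E:compact1}, is the intended one. However, two facts you rely on are false.

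\textbf{First, $\maj$ is not preserved.} The map $'$ turns each rectangle upside down. A descent whose upper box is in row $r+1$ of a rectangle of height $h$ contributes $h-r$ to $\maj(\tau)$, but its image contributes $r$ to $\maj(\rev(\tau)')$. These weights agree only when $h\le 2$. For example, take $\lambda=(3)$: the column $1,2,1$ (bottom to top) has $\maj=2$, while its image $2,1,2$ has $\maj=1$. So the left side does not become $\sum q^{\maj}t^{\eta^{\circ}}$ over the image, and you cannot pull $q^{\maj(\rho)}$ back to $q^{\maj(\sigma)}$ at the end.

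What you need instead is a property of the compression. Its fibres must preserve, in every rectangle, the number of descents in each pair of adjacent rows; this is the analogue of $\des(\sigma\vert_r^{r+1})=\des(\gamma(\sigma)\vert_r^{r+1})$ in Theorem \ref{T:eta1}, and you must extract it from the construction of the compact formula. The $\maj$ of a rectangle, both before and after the flip, depends only on these counts. So $\maj$ of the pulled-back tableaux is then constant on each fibre, and the $q$-exponent can be read off on the dual side.

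\textbf{Second, for non-rectangular $\lambda$ the map does not carry dual non-attacking fillings to non-attacking ones, and no use of $\rev$ repairs this.} Each rectangle is flipped about its own midline. So row $i$ of rectangles of heights $h_1\ne h_2$ goes to rows $h_1+1-i$ and $h_2+1-i$, which destroys even condition (I) across rectangles. For $\lambda=(2,1)$, the filling $\tau$ with columns $2,1$ (bottom to top) and $1$ is dual non-attacking, yet $\rev(\tau)'$ has bottom row $2,2$. Hence the left side is not the sum in \eqref{E:compact1}. Your pull-back, which drops ``$\sigma$ dual non-attacking'' as redundant, is therefore indexed by the wrong set. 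This is why the statement imposes that condition separately. The sortedness of $\rev(\sigma)'$ has to be read rectangle by rectangle: for this shape every $d$ equals $1$, and a global reading would lose $\tau$.

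The paper gives no separate proof, but the analogue of its proof of \eqref{E:main_comp} is rectangle-wise, and that is the repair.
\begin{itemize}
\item Inside one rectangle, dual non-attacking coincides with non-attacking, $\rev(\cdot)'$ turns bottom-row increasing into top-row increasing, and $\eta^{\bullet}(\tau_j)=\eta^{\circ}(\rev(\tau_j)')$. So the compact formula applies rectangle by rectangle.
\item All cross-rectangle data stay on the dual side. The dual attacking conditions between rectangles depend only on row contents, which the compression preserves.
\item For the cross-rectangle inversion triples in $\eta^{\bullet}$, you must check, as in Case IV of Theorem \ref{thm6.1}, that the compression moves do not change their number.
\end{itemize}

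Finally, in your last step the product in \eqref{eqT:Ptin} involves $\arm'$ of \emph{restricted} boxes, not unrestricted ones. The multiset of their $\arm'$-values in a row of a rectangle depends only on how many restricted boxes there are, so your conclusion there still stands.
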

Unlike (\ref{eqthm2.2}), the second product in (\ref{eqT:Ptin}) can not be cancelled out by the factor $b_{\lambda}(q,t)$. This also happens in the combinatorial formula of $P_{\lambda}(X;q,t)$ using the inversion statistic by Haglund, Haiman and Loehr \cite{HHL04}.
\begin{theorem}\cite[Proposition 8.1]{HHL04}
	For a partition $\lambda$,
	\begin{align*}
		P_{\lambda}(X;q,t)&=\frac{1}{b_{\lambda}(q,t)}
		\sum_{\tau \ \textrm{$\inv$-non-attacking} } q^{\maj(\tau)} t^{\overline{\inv}(\tau)} x^{\tau}  \notag\\
		&\qquad \times\prod_{\tau(u)\ne \tau(\South(u))\atop \text{ or}\, u \in\, \text{row }1}(1-t) \prod_{\tau(u)=\tau(\South(u)) \atop \text{ and}\,u \not\in\, \text{row }1} (1-q^{\leg(u)+1} t^{1+\arm(u)}),
	\end{align*}
	where $b_{\lambda}(q,t)$ is defined in (\ref{E:bla2}).
\end{theorem}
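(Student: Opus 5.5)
This is the HHL formula \cite[Proposition 8.1]{HHL04}, and I would reprove it by superization–compression in the same way as the proof of (\ref{eqthm1.2}). The plan is to start from (\ref{E:supJ2}) with $\vartheta=\inv$:
\begin{align*}
J_{\lambda}(X;q,t)=t^{n(\lambda)+n}\sum_{\sigma\in \M(\lambda'),\ |\sigma|\ \inv\textrm{-non-attacking}}(-1)^{m(\sigma)}t^{-p(\sigma)-\inv(\sigma)}q^{\maj(\sigma)}x^{|\sigma|}.
\end{align*}
I then fix a positive $\inv$-non-attacking filling $\tau$ and sum over all $\sigma$ with $|\sigma|=\tau$. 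Each box independently chooses a sign, so the inner sum should factor box by box, provided the changes in $\maj$ and $\inv$ caused by negating one entry depend only on that box.

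First take an unrestricted box $s$, that is, one with $\tau(s)\ne\tau(\South(s))$ (this includes every row-1 box). Negating $\tau(s)$ leaves every descent unchanged. I would also check that it leaves every inversion triple unchanged. The only way a sign can matter in $I(a,b)$ is when $|a|=|b|$. Such equal pairs inside an inversion triple are forbidden by the $\inv$-attacking conditions (I) and (II$'$), except for the vertical pair $(a,b)$, and that pair is unrestricted by assumption. Hence this box contributes the factor $t^{-1}-1=t^{-1}(1-t)$.

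Now take a restricted box $s$. If $\sigma(s)$ is negative, then $\maj$ increases by $\leg(s)+1$. I would show that $\inv$ decreases by exactly $\arm(s)$. The relevant triples are those with $s$ as the top-left entry $a$ and $c$ to the right in the same row. Since $|c|\ne|a|$ and $|a|=|b|$, the triple $(|a|,b,c)$ is an inversion triple and $(\overline{a},b,c)$ is not. Triples where $s$ plays the role of $b$ or $c$ are unchanged because of the non-attacking conditions. This gives the factor $t^{-1}-q^{\leg(s)+1}t^{\arm(s)}=t^{-1}(1-q^{\leg(s)+1}t^{\arm(s)+1})$.

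Collecting the factors $t^{-1}$ from all $n$ boxes cancels $t^{n}$ and leaves $t^{n(\lambda)-\inv(\tau)}=t^{\overline{\inv}(\tau)}$. This gives exactly $b_\lambda(q,t)$ times the stated sum, and dividing by $b_\lambda$ via (\ref{E:JP}) finishes the argument. Two points need care. One is the bookkeeping of $n(\lambda)$ against the number of triples counted by $\inv$; the augmented $0$ and $\infty$ entries make the number of triples equal $n(\lambda)$ in the right convention. The other is the sign analysis for triples in which the negated entry occupies the $b$ or $c$ position, which I expect to be the main obstacle. I would carry it out case by case as in (\ref{E:Q1a})--(\ref{E:Q2a}), using $I(a,b)=I(a,\overline{b})$ whenever $|a|\ne|b|$.
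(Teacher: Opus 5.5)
Your proposal is correct and is essentially the argument behind \cite[Proposition 8.1]{HHL04}, which the paper cites without reproving: superization via (\ref{E:supJ2}) with $\vartheta=\inv$, followed by box-by-box compression. This is the same pattern the paper uses for (\ref{eqthm1.2}) and Lemma \ref{prop2}. One small slip to fix: when the negated box is in the $b$ position of a triple, its vertical pair with the box above may well be restricted, so "unrestricted by assumption" does not cover that case. The right justification is that $I(a,b)=I(a,\overline{b})$ holds for all $a,b\in\A$, not only when $|a|\ne|b|$, because the sign of the lower entry never affects $I$; this settles that case at once.
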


\section{Proof of Theorem \ref{thmE}}\label{S:10}
This section gives a compressed combinatorial formula for non-symmetric Macdonald polynomials $E_{\gamma}(X;q,t)$. Throughout this section, all weak compositions $\gamma$ are assumed to have length $n$ and $X=(x_1,\ldots,x_n)$. 

For a weak composition $\gamma=(\gamma_1,\ldots,\gamma_n)$, let $\dg'(\gamma)$ denote the array of boxes with $\gamma_i$ boxes in the $i$th column from left to right, with the bottom row justified.  Let $\widehat{\dg}'(\gamma)$  be the diagram obtained by  adjoining one box to the bottom of each column; that is, there are exactly $n$ boxes in row $0$.
For a filling $\sigma$ of the weak composition diagram $\dg'(\gamma)$, we define $\widehat{\sigma}$ to be the function  $\widehat{\sigma}:\widehat{\dg}'(\gamma)\rightarrow[n]$  such that $\widehat{\sigma}$ agrees with $\sigma$ on $\dg'(\gamma)$, and $\widehat{\sigma}((0,j))=j$ for $1\le j\le n$.  That is, row $0$ of $\widehat{\sigma}$ is filled with entries $1$ to $n$ from left to right.
\begin{definition}[Statistic $\coinv'$]\label{Def:coinv}
	For a filling $\widehat{\sigma}$ of $\widehat{\dg}'(\gamma)$, we consider the triples $(a,b,c)$ of entries in the filling $\widehat{\sigma}$.
	
	A {type $A$ triple} of $\sigma$ is a triple $(a,b,c)=\begin{ytableau}
		c &\none[\dots] & a
		\\
		\none[] & \none[] &   b
	\end{ytableau}$
	of entries in $\widehat{\sigma}$ such that 
	\begin{enumerate}
		\item the column-length of $c$ is less than or equal to the column-length of $a$.
	\end{enumerate}
	A {\em type $B$  triple} of $\sigma$ is a triple $(a,b,c)=\begin{ytableau}
		a  &\none[]
		\\
		b &\none[\dots] & c\\
	\end{ytableau}$ of entries in $\widehat{\sigma}$ satisfying 
	\begin{enumerate}
		\setcounter{enumi}{1}
		\item the column-length of $b$ is larger than the column-length of $c$.
	\end{enumerate}
	Let $\coinv'(\widehat{\sigma})$ count the number of type $A$ and $B$ triples in $\widehat{\sigma}$ such that $\CMcal{Q}(a,b,c)=1$.
\end{definition}
The $\quinv$-attacking condition for the weak composition diagram $\dg'(\gamma)$ is identical to the one for partition diagram $\dg'(\lambda)$ in Definition \ref{Def:non-attacking}, while the attacking condition in $\dg'(\gamma)$ is modified by replacing $\mathrm{(III)}$ by the following $\mathrm{(III')}$:
\begin{enumerate}
	\item[$\mathrm{(III')}$] $u=(i,j)$, $v=(i-1,k)$ where $k<j$ such that $\gamma_m=\gamma_k$ for all $k\le m\le j$.
\end{enumerate}
When $\gamma$ is a partition, condition $\mathrm{(III')}$ reduces to condition $\mathrm{(III)}$ in Definition  \ref{Def:non-attacking}. We are now ready to state the first combinatorial formula for the non-symmetric  Macdonald polynomial $E_{\gamma}(X;q,t)$ by Haglund, Haiman and Loehr \cite{HHL08}.
\begin{theorem}\cite[Theorem 3.5.1]{HHL08}
	The non-symmetric Macdonald polynomials $E_{\gamma}(X;q,t)$ are given by 
	\begin{align}\label{E:HHL08}
		E_{\gamma}(X;q,t)=\sum_{\sigma:\gamma\rightarrow [n] \atop \widehat{\sigma}\, \quinv  \text{-non-attacking}}x^{\sigma} q^{\maj(\widehat{\sigma})}  t^{\overline{\coinv'}(\widehat{\sigma})}  \prod_{u\in\dg'(\gamma)\atop \widehat{\sigma}(u)\ne\widehat{\sigma}(\South(u))}  \frac{1-t}{1-q^{\leg(u)+1} t^{\widetilde{\arm}(u)+1}},
	\end{align}
 where $\leg(u),\widetilde{\arm}(u)$ are defined identically to the ones for partition diagram in Section \ref{S:2}.
\end{theorem}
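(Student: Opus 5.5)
The statement is quoted from \cite[Theorem 3.5.1]{HHL08}. If I had to prove it from scratch rather than cite it, I would characterize $E_{\gamma}(X;q,t)$ by the Knop--Sahi recursion instead of by the Cherednik inner product, and verify that the right-hand side of \eqref{E:HHL08}, call it $F_{\gamma}$, satisfies the same recursion and the same initial condition. Concretely, I would use three facts. First, $E_{(0,\ldots,0)}=1$. Second, there is the affine raising relation $E_{\Phi\gamma}(X)=q^{\gamma_n}x_1E_{\gamma}(x_2,\ldots,x_n,q^{-1}x_1)$, where $\Phi\gamma=(\gamma_n+1,\gamma_1,\ldots,\gamma_{n-1})$. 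Third, for $\gamma_i>\gamma_{i+1}$ there is the intertwiner relation $E_{s_i\gamma}=\bigl(T_i+\tfrac{1-t}{1-q^{a}t^{b}}\bigr)E_{\gamma}$, where $T_i$ is the Demazure--Lusztig operator and $q^at^b$ is the ratio of the spectral values of $\gamma$ at positions $i,i+1$. Every weak composition is reached from $0$ by these moves, so these three facts determine $E_\gamma$ uniquely. For $\gamma=0$ the only filling is empty, so $F_0=1$. As a consistency check I would also read off from the sum that $F_\gamma=x^\gamma+(\text{terms }x^\nu,\ \nu\prec\gamma)$: the filling with $\sigma(i,j)=j$ in every box is $\quinv$-non-attacking, has $\maj=0$ and no unrestricted boxes, so it contributes exactly $x^{\gamma}$.

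The raising relation is the easy step. Given a filling of $\dg'(\gamma)$, I would move its last column to the front, append a new bottom box (the new column then has length $\gamma_n+1$), and cyclically relabel the entries $j\mapsto j+1$, with $n\mapsto 1$. The basement row $0$ stays $1,\ldots,n$. I would then check, box by box, that this map preserves the $\quinv$-non-attacking condition, that the statistics $\overline{\coinv'}$ and $\widetilde{\arm}$ are unchanged, and that $\maj$ changes exactly by the descents produced by the relabelling $n\mapsto1$. Together these should account for the factor $q^{\gamma_n}$ and the substitution $x_n\mapsto q^{-1}x_1$.

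The main obstacle is the intertwiner step, because $T_i$ acts on monomials by a non-combinatorial divided-difference formula. My plan is to group the fillings of $\dg'(s_i\gamma)$ and $\dg'(\gamma)$ into classes indexed by the values occurring in columns $i$ and $i+1$ (after reordering the two columns), and then to use the queue-inversion flip operators $\rho_i^r$ of Definition~\ref{Def3.1}. By Lemma~\ref{lem3.1}, these change $\quinv$-type counts only locally and preserve $\maj$, which reduces the verification to the two columns concerned, together with the interaction of those two columns with the other columns through triples of types $A$ and $B$. On each class, the identity should come down to a two-variable computation in $x_i,x_{i+1}$. The factor $(1-t)/(1-q^{\leg+1}t^{\widetilde{\arm}+1})$ attached to the box where the two columns first differ must then match the coefficient $(1-t)/(1-q^at^b)$ of the intertwiner. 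If the direct bookkeeping becomes unmanageable, my fallback is the route taken in \cite{HHL08}: first prove the analogous formula with the $\inv$ statistic by this recursion, where the flips are the Loehr--Niese operators $\zeta_i$, and then transport it to the $\quinv$ form using $\rho_i$ and Lemma~\ref{lem3.1}.
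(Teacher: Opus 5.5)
The paper gives no proof of this statement. It is quoted from \cite{HHL08} and used only as the input for the proof of Theorem~\ref{thmE}, so your opening citation is exactly what the paper does.

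Your from-scratch outline via the Knop--Sahi characterization is a sound framework, and the parts you actually describe check out.
\begin{itemize}
\item The leading filling $\sigma(i,j)=j$ has $\maj=0$ and no unrestricted boxes, and all of its triples have $a=b\neq c$, so it contributes $x^\gamma$.
\item In the rotation step, for entries in $[n]$ the condition $\mathcal{Q}(a,b,c)=1$ holds iff $a=b\neq c$ or $(a,b,c)$ is cyclically increasing. Hence it survives the relabelling $j\mapsto j+1$, $n\mapsto 1$.
\item Type A triples through the moved column become type B triples and conversely. The only new triples have $a=b=1$, so they have $\mathcal{Q}=1$.
\item The attack relation and $\widetilde{\arm}$ are preserved. $\maj$ increases by $\gamma_n$ minus the number of entries equal to $n$, which is exactly what $q^{\gamma_n}$ and $x_n\mapsto q^{-1}x_1$ require.
\end{itemize}

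The gap is the intertwiner step, which carries all the content, and the mechanism you propose for it cannot work as described.

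First, the operators $\rho_i^r$ and Lemma~\ref{lem3.1} exist only for adjacent columns of equal height. Moreover, Lemma~\ref{lem3.1} controls $\quinv$ on partition diagrams, not the type A/B triples of $\coinv'$. In the intertwiner step you have $\gamma_i>\gamma_{i+1}$, and the two sides are sums over the different diagrams $\dg'(\gamma)$ and $\dg'(s_i\gamma)$. No flip operator of Section~\ref{S:3} relates them.

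Second, $T_i$ acts on the variables, i.e.\ it exchanges the \emph{values} $i$ and $i+1$ wherever they occur in $\sigma$. These letters interact with the basement entries $i,i+1$ and with every other column through attacks, descents and triples. So grouping fillings by the contents of columns $i,i+1$ does not reduce the identity to a computation in $x_i,x_{i+1}$.

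The natural localization is by values. Every letter $k\notin\{i,i+1\}$ compares with $i$ and $i+1$ in the same way, so you should fix all other entries and analyse the possible placements of the letters $i$ and $i+1$. You must then show that the resulting sums, with their factors $(1-t)/(1-q^{\leg(u)+1}t^{\widetilde{\arm}(u)+1})$ and $t$-weights, reproduce the action of $T_i$ plus the constant $(1-t)/(1-q^at^b)$. That analysis is precisely what is missing.

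Your fallback does not close the gap either. It leaves the same intertwiner step to be proved for an $\inv$-type statistic, and transport by $\rho_i$ is again confined to equal-height columns. It is also not the route of \cite{HHL08}, since the Loehr--Niese flips postdate that paper.
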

Before we prove the compact formula (Theorem \ref{thmE}) for $E_{\gamma}(X;q,t)$, we introduce some notations and definitions for the weak composition diagrams. First, the weak composition diagram can also be regarded as a concatenation of maximal rectangles from left to right. That is, two columns $i$ and $j$ such that $i<j$ belong to same rectangle if $\gamma_i=\gamma_j$ and $\gamma_k=\gamma_i$ for all $i\le k\le j$. We write 
\begin{align}\label{E:dec_wide}
	\widehat{\sigma}=\widehat{\sigma}_{1} \sqcup \cdots \sqcup \widehat{\sigma}_{k},
\end{align}
where $\widehat{\sigma}_i$ is the $i$th rectangle of $\widehat{\sigma}$ from left to right.

\begin{example}
	For $n=6$ and $\gamma=(2,2,1,2,0,2)$, the filling $\widehat{\sigma}$ below is $\quinv$-non-attacking, where the bottom row is row $0$.
	\begin{align*}
		\widehat{ \sigma}=\begin{ytableau}
			*(green)6	 & *(green)1 &\none & *(green)2  & \none  & *(green)4 \\
			*(green)1	 & *(green)2 & 3  &  *(green)4  & \none  &*(green) 5\\
			*(green)1	 & *(green)2 & 3  &  *(green)4  & 5  & *(green)6\\
		\end{ytableau}
	\end{align*}
We have $\widehat{\sigma}=\widehat{\sigma}_{1} \sqcup \widehat{\sigma}_{2}  \sqcup \widehat{\sigma}_{3} \sqcup \widehat{\sigma}_{4}\sqcup \widehat{\sigma}_{5}$, where $\widehat{\sigma}_1$, $\widehat{\sigma}_3$ and $\widehat{\sigma}_5$ are the rectangles of height $2$ (highlighted in green).
\end{example}
Second, we generalize the statistics $\quadinv$ and $\eta$ to the weak composition diagram $\widehat{\dg}'(\gamma)$.
\begin{definition}
	Consider a super filling $\widehat{\sigma}$ of $\widehat{\dg}'(\gamma)$, where the bottom row is always filled with positive integers $1$ to $n$ from left to right. Let $\widehat{\sigma}_i$ be the $i$th rectangle of $\widehat{\sigma}$ for $1\le i\le k$ in (\ref{E:dec_wide}). For each $\vartheta\in \{\quadinv,\eta\}$, we define
	\begin{align*}
		\vartheta(\widehat{\sigma})&=\sum_{i=1}^{k} \vartheta(\widehat{\sigma}_{i})+\sum_{(a,b,c)\in\widehat{\sigma}}\Q(a,b,c),
	\end{align*}
	which is summed over triples $(a,b,c)$ of types $A$ and $B$ located  in different rectangles of  $\widehat{\sigma}$.
\end{definition}

\begin{remark}
	Let $\widehat{\sigma}$ be a non-attacking positive filling of $\widehat{\dg}'(\gamma)$. We explain how the statistics $\quadinv$ and $\eta$ reduce to those for fillings of a partition diagram.
	If $\gamma$ is weakly increasing, then there are no triples of type $B$ in $\widehat{\sigma}$. We reverse the order of the rectangles in $\widehat{\sigma}$, obtaining $\widehat{\sigma}_{k} \sqcup \cdots \sqcup \widehat{\sigma}_{1}$, call this $\tau$. Then all  triples of type $A$ in $\widehat{\sigma}$ are inversion triples in $\tau$, $\quadinv(\widehat{\sigma})=\quadinv(\tau)$ and 
	$\eta(\widehat{\sigma})=\eta^*(\tau)$.
	
	If $\gamma$ is weakly decreasing, then there are no triples of type $A$ located in two columns of unequal height in $\widehat{\sigma}$ and all triples of type $B$ are queue inversion triples. That is, $\eta$ reduces to (\ref{E:etai}).
\end{remark}
For every box $u=(i,j)\in \dg'(\gamma)$, we define
\begin{align*}
	\arm''(u)&=|\{(i,k)\in \dg'(\mu): k<j, \,\mu_k\le\mu_j \}| +  |\{(i-1,k)\in \widehat{\dg}'(\mu): k>j, \,\mu_j>\mu_k \}| \\
	&\qquad +|\{(i,k)\in \dg'(\mu): k>j, \, (i,k) \ \textrm{is unrestricted},  \textrm{ and } \,\mu_m=\mu_k \textrm{ for } j\le m\le k  \}|.
\end{align*}
Finally, we sketch the proof of Theorem \ref{thmE}, as it follows from analogous argument developed in Sections \ref{S:5} -- \ref{S:7}. 

{\em Proof of Theorem \ref{thmE}}.
We begin by proving 
	\begin{align}
		&\quad\sum_{\sigma:\gamma\rightarrow [n] \atop  \widehat{\sigma} \ \quinv\text{-non-attacking}}x^{\sigma} q^{\maj(\widehat{\sigma})}  t^{\overline{\coinv'}(\widehat{\sigma})}
		\prod_{u\in\dg'(\gamma)\atop \widehat{\sigma}(u)\ne\widehat{\sigma}(\South(u))}(1-t)
		\prod_{u\in\dg'(\gamma)\atop \widehat{\sigma}(u)=\widehat{\sigma}(\South(u))} (1-q^{\leg(u)+1} t^{\widetilde{\arm}(u)+1})\notag\\
		&=\sum_{\sigma:\gamma\rightarrow \A^n \atop  \widehat{\sigma} \ \quinv\text{-non-attacking}} x^{|\sigma|} q^{\maj(\widehat{\sigma})}  t^{\overline{\coinv'}(\widehat{\sigma})} (-t)^{m(\sigma)}, \label{E:2}
	\end{align}
similar to the proof of (\ref{eqthm1.2}) in Theorem \ref{thm1}. Then we continue by showing 
	\begin{align*}
		\sum_{\sigma:\mu\rightarrow \A^n \atop  \widehat{\sigma} \ \quinv\text{-non-attacking}} x^{|\sigma|} q^{\maj(\widehat{\sigma})}  t^{\coinv'(\widehat{\sigma})} (-t)^{m(\sigma)}
		=\sum_{\sigma:\mu\rightarrow \A^n \atop \widehat{\sigma} \ \text{non-attacking}} x^{|\sigma|} q^{\maj(\widehat{\sigma})}  t^{\overline{\quadinv}(\widehat{\sigma})} (-t)^{m(\sigma)}
	\end{align*}
much in the spirt of (\ref{eq1.1}) in Theorem \ref{thm11}. Finally, we complete the proof by establishing
\begin{align*}
	&\sum_{\sigma:\mu\rightarrow \A^n \atop \widehat{\sigma} \ \text{non-attacking}} x^{|\sigma|} q^{\maj(\widehat{\sigma})}  t^{\overline{\quadinv}(\widehat{\sigma})} (-t)^{m(\sigma)} \notag \\
	&=\sum_{\tau:\mu\rightarrow [n] \atop \widehat{\tau} \ \text{non-attacking}} x^{\tau} q^{\maj(\widehat{\tau})}  t^{\overline{\quadinv}(\widehat{\tau})}
	\prod_{u\in\dg'(\mu)\atop \widehat{\tau}(u)\ne\widehat{\tau}(\South(u))}(1-t)
	\prod_{u\in\dg'(\mu)\atop \widehat{\tau}(u)=\widehat{\tau}(\South(u))} (1-q^{\leg(u)+1} t^{\arm''(u)+1}),
\end{align*}
following the strategy used to prove (\ref{E:2}).

\qed

When $\gamma$ is a partition, we obtain the following consequence. For the case $\vartheta=
\eta^{\circ}$ where $\S=\S_7$, Corollary \ref{cor:1} is equivalent to the formula in \cite[Proposition 1.10 and Theorem 5.9]{CMW22}. 

\begin{corollary}\label{cor:10.1}
For a partition $\lambda$, and $\vartheta\in \{\overline{\quadinv},\eta^{\circ}\}$,
\begin{align*}
	E_{\lambda}(X;q,t)&=
	\sum_{\sigma:\mu\rightarrow [n] \atop \widehat{\sigma} \ \text{non-attacking}}x^{\sigma} q^{\maj(\widehat{\sigma})}  t^{\vartheta(\widehat{\sigma})}
	\prod_{u\in\dg'(\mu)\atop \widehat{\sigma}(u)\ne\widehat{\sigma}(\South(u))} \frac{(1-t)}{  1-q^{\leg(u)+1} t^{\overline{\arm}(u)+1}   }.
\end{align*}
\end{corollary}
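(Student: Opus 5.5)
The plan is to specialize Theorem \ref{thmE} to the case $\gamma=\lambda$, a partition, and then cancel the prefactor $\prod_{u\in\dg'(\lambda)}(1-q^{\leg(u)+1}t^{\widetilde{\arm}(u)+1})^{-1}$ against the restricted-box factors, exactly as in the proof of (\ref{eqthm2.1}).

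The first step is to check what each ingredient becomes when $\gamma=\lambda$. The attacking condition $\mathrm{(III')}$ reduces to $\mathrm{(III)}$. Since $\lambda$ is weakly decreasing, the triples of type $A$ lying in columns of unequal height disappear, and type $B$ triples are queue inversion triples. So $\eta$ on $\widehat{\sigma}$ reduces to (\ref{E:etai}), and likewise for $\quadinv$, using the remark following the definition of $\vartheta(\widehat{\sigma})$.

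For a partition, every column to the left of column $j$ has height at least $\lambda_j$. Hence the second summand of $\arm''(u)$ is empty, and the first summand only counts columns $k<j$ with $\lambda_k=\lambda_j$. Similarly, $\widetilde{\arm}(u)$ becomes $|\{(i-1,k):k>j,\lambda_k<\lambda_j\}|+|\{(i,k):k<j,\lambda_k=\lambda_j\}|$, where row $0$ is included in the first count via $\widehat{\dg}'$.

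The second step mirrors the computation (\ref{E:b1}). Fix a rectangle $\sigma_j$ of width $m$ and a row $i+1$ in it, with $k$ restricted boxes. The multiset of values $\widetilde{\arm}(u)$ over the $m$ boxes of this row is $\{h,h+1,\dots,h+m-1\}$, where $h$ counts the boxes in row $i$ to the right of the rectangle. I will show that the values $\arm''(u)$ over the restricted boxes, together with the values $\overline{\arm}(u)$ over the unrestricted boxes, form the same multiset $\{h,\dots,h+m-1\}$. This is the same interleaving argument as in (\ref{E:b1}) and Remark \ref{R:1}: $\arm''$ counts same-rectangle columns to the left plus unrestricted same-rectangle columns to the right, while $\overline{\arm}$ counts unrestricted same-rectangle columns to the left plus $h$.

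Given this, the row-by-row product $\prod_u(1-q^{\leg+1}t^{\widetilde{\arm}+1})$ splits. The restricted part cancels the factor $\prod(1-q^{\leg+1}t^{\arm''+1})$ in (\ref{E:quadinv}). What remains is $\prod_{\text{unrestricted}}(1-t)/(1-q^{\leg+1}t^{\overline{\arm}+1})$, which gives the corollary.

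The main obstacle is this multiset identity. The difficulty is matching the bottom row $0$ of $\widehat{\sigma}$, filled $1,\dots,n$, and the resulting boundary conventions for $h$ with the definitions of $\arm''$ and $\overline{\arm}$. My approach is a direct counting check: order the restricted and unrestricted boxes within the row and show that the two kinds of arm values fill $\{h,\dots,h+m-1\}$ without overlap. If the definitions of $\overline{\arm}$ and $\widetilde{\arm}$ turn out to need row $0$ treated as part of the diagram, I will adjust $h$ accordingly; the cancellation pattern stays the same.
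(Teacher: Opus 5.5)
Your route is the paper's: specialize (\ref{E:quadinv}) to $\gamma=\lambda$, then cancel the prefactor against the restricted factors row by row, using an interleaving argument modelled on (\ref{E:b1}). However, the step you flag as the main obstacle rests on a misreading of $\arm''$.

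Its second summand is $|\{(i-1,k)\in\widehat{\dg}'(\lambda): k>j,\ \lambda_j>\lambda_k\}|$. This counts cells in the row below that lie in strictly shorter columns to the \emph{right} of $u$. For a partition it is not empty; it equals exactly your $h$. Your observation that columns to the left have height at least $\lambda_j$ only simplifies the first summand.

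With $h$ dropped, take a row of a width-$m$ rectangle containing $k$ restricted boxes. Your description gives restricted values $\{m-k,\dots,m-1\}$. Together with the unrestricted values $\{h,\dots,h+m-k-1\}$, these do not form $\{h,\dots,h+m-1\}$ once $h>0$ and $k>0$, so the cancellation fails. For instance, take $\lambda=(2,1)$ and $n=2$. The only admissible filling has the box $(2,1)$ restricted with $\widetilde{\arm}(2,1)=1$, but your description gives $\arm''(2,1)=0$. Hence the prefactor $1-qt^{2}$ would not cancel.

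The fix is immediate. Let $u$ sit in position $p$ (counting from $0$) of such a row. Let $R_L$ be the number of restricted boxes of the rectangle to its left, and $U_L$ (resp.\ $U_R$) the number of unrestricted boxes to its left (resp.\ right). Then $\widetilde{\arm}(u)=h+p$ and $\overline{\arm}(u)=h+U_L$. For restricted $u$, using $p=R_L+U_L$ and $U_L+U_R=m-k$,
\[
\arm''(u)=p+h+U_R=h+(m-k)+R_L .
\]
So the unrestricted boxes give $h,\dots,h+m-k-1$ and the restricted ones give $h+m-k,\dots,h+m-1$, which is exactly the multiset you need.

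As for row $0$: for a partition, condition $\mathrm{(II)}$ of Definition \ref{Def:non-attacking} applied against row $0$ forces $\widehat{\sigma}(1,j)\le j$. Since entries in a row are distinct, induction on $j$ gives $\widehat{\sigma}(1,j)=j$. So every row-$1$ box is restricted, and $\overline{\arm}$ is never evaluated there. You only need $\widetilde{\arm}$ to count row-$0$ cells exactly as $\arm''$ does, i.e.\ inside $\widehat{\dg}'(\lambda)$. This convention is needed anyway for (\ref{E:quadinv}) itself to return $E_{(1,0)}=x_1$.
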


\begin{proof}
It is a direct consequence of (\ref{E:quadinv}) and
\begin{align*}
	\prod_{u\in\dg'(\lambda)}(1-q^{\leg(u)+1} t^{\widetilde{\arm}(u)+1})&= \prod_{u\in\dg'(\lambda)\atop \widehat{\sigma}(u)=\widehat{\sigma}(\South(u))} (1-q^{\leg(u)+1} t^{\arm''(u)+1}) \\
	&\qquad \times\prod_{u\in\dg'(\lambda)\atop \widehat{\sigma}(u)\ne\widehat{\sigma}(\South(u))} (1-q^{\leg(u)+1} t^{\overline{\arm}(u)+1}).
\end{align*}
The latter one is proved by a similar argument as for (\ref{E:b1}).
\end{proof}

 \section{Proof of Theorem \ref{thm:twoP}}\label{S:8}
 This section provides alternative proofs of Theorem \ref{thm:twoP}. 
 The combinatorial formula (\ref{macdonald3}) for $P_{\lambda}(X;q,t)$ is established in \cite[Equation (4.10)]{CHO22}, and it follows from \cite[Theorem 1.10 and Proposition 4.1]{CMW22}. Specifically, 
 let $\textrm{inc}(\gamma)$ denote the weak composition obtained by sorting the parts of $\gamma$ in weakly increasing order. Then (\ref{macdonald3}) follows from
  \begin{align*}
 	P_{\lambda}(X;q,t)=\sum_{\gamma^+=\lambda} E_{\textrm{inc}(\gamma)}^{\sigma}(X;q,t),
 \end{align*}
where the sum is over all weak compositions $\gamma=(\gamma_1,\ldots,\gamma_n)\in \mathbb{N}^n$
satisfying $\gamma^+=\lambda$, i.e., the partition determined by the positive parts of $\gamma$ is $\lambda$. Here, $E_{\textrm{inc}(\gamma)}^{\sigma}(X;q,t)$ is a permuted-basement nonsymmetric Macdonald polynomial introduced by Alexandersson \cite{Alex:16}, and $\sigma$ is the longest permutation for which $\gamma_{\sigma(1)}\le \cdots\le \gamma_{\sigma(n)}$. 

To state (\ref{macdonald3}) precisely, we first define the $\coinv^*$ statistic.
\begin{definition}[Statistic $\coinv^*$]
	For a filling $\sigma\in \T(\lambda')$, we consider the triples of entries in the augmented filling $\hat{\sigma}$ (in Definition \ref{Def:inv}). Let $\coinv^*(\sigma)$ count the number of triples $(a,b,c)$ of types $A$ and $B$ (see Definition \ref{Def:coinv}) in $\hat{\sigma}$ such that $\CMcal{Q}(a,b,c)=0$ and $c\ne 0$.
\end{definition}
\begin{remark}
	Equation (\ref{macdonald3}) in \cite[Equation (4.10)]{CHO22} is formulated using the diagram of $\textrm{inc}(\lambda)$, in which the column-lengths are $\lambda_n,\ldots,\lambda_1$ from left to right for a partition $\lambda=(\lambda_1\ge\cdots\ge\lambda_n)$. Here (\ref{macdonald3}) is stated using the diagram $\dg'(\lambda)$, with column-lengths $\lambda_1,\ldots,\lambda_n$ from left to right.
\end{remark}
The proof of (\ref{macdonald3}) begins with a new combinatorial formula for the modified Macdonald polynomials $\tilde{H}_{\lambda}(X;q,t)$ given in Theorem \ref{thm1a}. For any super filling $\sigma$, let $\minv(\sigma)$ denote the number of inversion triples arising from two columns of equal height, and queue inversion triples from two columns of unequal height. 
\begin{theorem}\label{thm1a}
	Let $[\sigma]$ denote the row-equivalency class of $\sigma$. Then
	\begin{align}\label{E:mixinv1}
		\sum_{\tau\in[\sigma]}t^{\maj(\tau)}q^{\minv(\tau)}=\sum_{\tau\in[\sigma]}t^{\maj(\tau)}q^{\quinv(\tau)}.
	\end{align}
	Consequently, the modified Macdonald polynomial is given by
	\begin{align}\label{E:tildeH1}
		\tilde{H}_{\lambda}(X;q,t)=\sum_{\tau\in \T(\lambda)}x^{\tau}t^{\maj(\tau)}q^{\minv(\tau)}.
	\end{align}
\end{theorem}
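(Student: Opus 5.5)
We will prove (\ref{E:mixinv1}) by a bijection $\Phi:[\sigma]\to[\sigma]$ with $(\maj,\minv)\Phi(\tau)=(\maj,\quinv)(\tau)$. The two statistics differ only in how triples inside a single maximal rectangle are counted. For two columns of unequal height, both $\minv$ and $\quinv$ count queue inversion triples. For two columns of equal height, $\minv$ counts inversion triples and $\quinv$ counts queue inversion triples. The natural tool is therefore the pair of flip operators $\rho_i^r$ (Lemma \ref{lem3.1}) and $\zeta_i^r$ of Definition \ref{Def3.1}. Both act only inside a rectangle $\tau_j$ of the decomposition (\ref{E:sigmadec}), preserve $\maj$, and change their paired statistic by a controlled amount between two consecutive rows.

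The first step is the rectangular case. For a rectangle this is the classical equidistribution of $(\maj,\inv)$ and $(\maj,\quinv)$ over a row-equivalence class. That equidistribution is exactly the second equality of (\ref{eqthm1}) in Theorem \ref{T:eta}, since both sides equal the $\inv$-sum. To make it usable piecewise, we will build it explicitly as in the proof of Theorem \ref{thm5.1}. We process the rectangle from the bottom row upward, two rows at a time. On rows $r,r+1$ we apply $\rho_{w}^r$ to bring the top row into a canonical order, say increasing, which is the PDS-paired operator from (\ref{E:wflip2}). We then apply $(\zeta_w^r)^{-1}$. At the canonical position the counts of inversion triples and queue inversion triples between these two rows agree, up to the contribution of the top row alone, which is itself an order statistic handled identically on both sides. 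Lemma \ref{lem3.1}, together with the analogous property of $\zeta$ from \cite{LN12}, shows that the statistics are untouched in all other row pairs and that $\maj$ is preserved.

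The second step passes from rectangles to $\dg(\lambda)$. We define $\Phi(\tau)=\Phi(\tau_{\lambda_1})\sqcup\cdots\sqcup\Phi(\tau_1)$, rectangle by rectangle. The key point is that the queue inversion triples between columns of different rectangles are unchanged. By Lemma \ref{lem3.1}, $\rho_i^r$ preserves the number of queue inversion triples between columns $i,i+1$ and any column $j>i+1$. We need the same invariance for $\zeta_i^r$ with respect to queue inversion triples whose third entry $c$ lies in a shorter column to the right.

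This is where I expect the main obstacle. It fails termwise but should hold for the pair $(a,b)$ summed over both columns. The operator $\zeta$ swaps the entries in rows $r,\dots,k$ of columns $i,i+1$, so the multiset of vertical pairs $(a,b)$ is unchanged in every row strictly between $r$ and $k$. Only the two boundary rows create mixed pairs. At the terminating row $k$, the condition $\CMcal{Q}(\sigma(k+1,i),\sigma(k,i),\sigma(k+1,i+1))=\CMcal{Q}(\sigma(k+1,i),\sigma(k,i+1),\sigma(k+1,i+1))$ should yield $\CMcal{Q}(a,b,c)+\CMcal{Q}(a',b',c)=\CMcal{Q}(a,b',c)+\CMcal{Q}(a',b,c)$ for every $c$. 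This would come from a case check of the same kind as in the proof of Lemma \ref{lem3.2} (the $(e,f)$ argument), and I would carry out that check first. If it resists, the fallback is to use $\delta_i^r$ together with Theorem \ref{T:eta1}. Its proof already guarantees invariance of triples with longer columns, and composing $\gamma$ for $\eta$ with the $\inv$ version gives a $\maj$-preserving bijection inside each rectangle.

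Finally, summing (\ref{E:mixinv1}) over row-equivalence classes with $x^\tau$ constant on $[\sigma]$, and applying (\ref{E:mmp21}), gives (\ref{E:tildeH1}).
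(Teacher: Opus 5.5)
\paragraph{Where the proposal breaks.} Your architecture is the paper's: a $\maj$-preserving bijection on each rectangle that trades $\inv$ for $\quinv$, applied rectangle by rectangle while keeping the queue inversion triples between columns of unequal height invariant, followed by summation and (\ref{E:mmp21}). The gap is the rectangular bijection itself. Your map $(\zeta_w^r)^{-1}\rho_w^r$ is the identity whenever the row being sorted is already in canonical order. It therefore needs $\inv=\quinv$ on such fillings, which is what your sentence ``at the canonical position the counts agree'' asserts. This is false. Take the two-row rectangle with top row $1\,2$ over bottom row $3\,4$, for which $\maj=0$:
\begin{itemize}
\item the queue inversion triples are $(0,1,2)$ and $(1,3,4)$, so $\quinv=2$;
\item $(1,3,2)$ is not an inversion triple and the bottom row has no inversion, so $\inv=0$.
\end{itemize}
With the decreasing convention, $2\,1$ over $4\,3$ gives $\inv=2$ and $\quinv=0$. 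Structurally, in a two-row rectangle with distinct entries, $\rho_i=\rho_i^2$ and $\zeta_i=\zeta_i^1$ stop by the same criterion: whether the chords $\{a,a'\}$ and $\{b,b'\}$ cross when the values are placed on a circle. Hence $\zeta_i\rho_i$ is either the identity or the exchange of two whole columns. The paper instead uses the bijection $\theta$ of \cite[Theorem 6.1]{JL24} on rectangles, with $(\inv,\quinv,\maj)\sigma=(\quinv,\inv,\maj)\theta(\sigma)$. Its key feature is that it is a product of queue-inversion flip operators. Lemma \ref{lem3.1} then gives the cross-rectangle invariance at once, and no $\zeta$ enters.

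\paragraph{The obstacle you flagged.} At the terminating row of $\zeta_i^r$ your four-term identity does hold. For distinct entries, $\Q(a,b,a')=\Q(a,b',a')$ says that $b,b'$ lie on the same side of the chord $\{a,a'\}$. That is exactly what $\Q(a,b,c)+\Q(a',b',c)=\Q(a,b',c)+\Q(a',b,c)$ requires for every $c$.

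The real failure is at the starting row $r$, which you did not consider. It also creates mixed vertical pairs, and no condition controls it. Put $1\,3$ in row $r$ over $2\,4$ in row $r-1$, with an entry $5$ in row $r-1$ of a shorter column to the right. Swapping row $r$ turns $\Q(1,2,5)+\Q(3,4,5)=2$ into $\Q(3,2,5)+\Q(1,4,5)=1$. So in general $\zeta_i^r$ preserves these triples only for $r=1$, where by (\ref{E:zeta1}) it is itself a queue-inversion flip.

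Your fallback through Theorem \ref{T:eta1} is not a proof as it stands either. It requires a $\maj$-preserving bijection from $\inv$ to $\eta$ on rectangles that also fixes the cross-rectangle triples, and Theorem \ref{T:eta1} only supplies the $\quinv$ side.
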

\begin{proof}
If the diagram $\dg(\lambda)$ is a rectangle, then a bijection $\theta:\T(\lambda)\rightarrow \T(\lambda)$ exists that satisfies $\sigma\sim \theta(\sigma)$ and 
 \begin{align}\label{E:sym1}
 	(\inv,\quinv,\maj)\sigma=(\quinv,\inv,\maj)\theta(\sigma)
 \end{align}
 as defined in \cite[Theorem 6.1]{JL24}. For a non-rectangular filling 
 $\sigma=\sigma_n\sqcup \cdots \sqcup \sigma_1$ of the Young diagram $\dg(\lambda)$, we define
\begin{align*}
	\theta(\sigma)=\theta(\sigma_n)\sqcup \cdots \sqcup \theta(\sigma_1).
\end{align*}
Since $\theta$ is a product of queue-inversion flip operators, the number, denoted by $a$, of queue inversion triples arising from two columns of unequal height is invariant under $\theta$ by Lemma \ref{lem3.1}. This implies
\begin{align*}
	\minv(\sigma)=\sum_{1\le i\le n}\inv(\sigma_i)+a =\sum_{1\le i\le n}\quinv(\theta(\sigma_i))+a
	=\quinv(\theta(\sigma)),
\end{align*}
and $\maj(\sigma)=\maj(\theta(\sigma))$ by (\ref{E:sym1}), as desired.
\end{proof}
We find the superization of $\tilde{H}_{\lambda}(X;q,t)$ given in (\ref{E:supJ1}) as an immediate consequence of  (\ref{E:supJ}) and (\ref{E:mixinv1}) in the super-filling setting.
\begin{corollary}
For a partition $\lambda$, let $\M(\lambda')$ be the set of super fillings of $\dg'(\lambda)$. Then
\begin{align}\label{E:supJ1}
	J_{\lambda}(X;q,t)
	&=t^{n(\lambda)+n}\sum_{\sigma\in\M(\lambda')} (-1)^{m(\sigma)} t^{-p(\sigma)-\minv(\sigma)}q^{\maj(\sigma)} x^{|\sigma|}.
\end{align}
\end{corollary}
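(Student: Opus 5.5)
The identity (\ref{E:supJ1}) should follow directly from (\ref{E:supJ}) with $\vartheta=\quinv$, once we show that $\minv$ and $\quinv$ are equidistributed with $\maj$ on super fillings. The plan is to lift Theorem \ref{thm1a} from positive fillings to super fillings and then compare the two superization sums.

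First we fix the weight $(-1)^{m(\sigma)}t^{-p(\sigma)}x^{|\sigma|}$. Then $m(\sigma)$, $p(\sigma)$ and $|\sigma|$ are determined by the multiset of signed entries in each row. They are therefore invariant on each row-equivalence class $[\sigma]$ inside $\M(\lambda')$, where row-equivalence is taken with respect to the signed alphabet $\A$. Summing (\ref{E:supJ}) class by class, it is enough to prove the super-filling version of (\ref{E:mixinv1}):
\begin{align*}
\sum_{\tau\in[\sigma]}q^{\maj(\tau)}t^{-\minv(\tau)}=\sum_{\tau\in[\sigma]}q^{\maj(\tau)}t^{-\quinv(\tau)}
\end{align*}
for every super filling $\sigma$.

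This is exactly (\ref{E:mixinv1}), with $q,t$ renamed, applied to super fillings. The proof of Theorem \ref{thm1a} uses only two ingredients. The first is the rectangular bijection $\theta$ of \cite[Theorem 6.1]{JL24}, which swaps $\inv$ and $\quinv$ and preserves $\maj$. The second is Lemma \ref{lem3.1}, which guarantees that queue-inversion flips preserve the queue inversion triples between columns of unequal height.

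The paper already notes that Lemma \ref{lem3.1} extends verbatim to super fillings. So the one step needing checking is that $\theta$, being a composition of the flip operators $\rho_i^r$ and $\zeta_i^r$, still satisfies (\ref{E:sym1}) on super fillings. The route I would take is standardization, as in \cite[Section 8]{HHL04}. Replace a super filling by a positive filling via a reading order that breaks ties among equal letters: positive letters are read in one direction and negative letters in the opposite one. With this choice, $I(a,b)$, and hence $\maj$, $\inv$ and $\quinv$, are unchanged. The statistics only ever compare two entries through $I$, and $I$ is a total-order comparison once ties are broken. Hence every step of $\theta$, and every identity in the proof of \cite[Theorem 6.1]{JL24}, transfers to super fillings.

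The main obstacle is that a single reading order must work simultaneously for $\inv$ (which reads across the row above) and $\quinv$ (which reads across the same row). If no common order exists, the fallback is to verify directly that the local identities behind $\theta$ hold for super entries. Those identities are Lemma \ref{lem3.1} and its inversion analogue for $\zeta_i^r$. They are checked case by case from the values of $\CMcal{Q}$, and $\CMcal{Q}$ depends only on the function $I$. Since $I$ obeys the same rules on $\A$ (ties between equal negative letters count as descents), the case analysis carries over unchanged.

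Once the classwise identity holds, we substitute into (\ref{E:supJ}) with $\vartheta=\quinv$. This gives (\ref{E:supJ1}), and the prefactor $t^{n(\lambda)+n}$ is untouched.
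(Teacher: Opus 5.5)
Your plan is essentially the paper's proof. The paper also applies the rectangle-wise bijection $\theta$ of \cite[Theorem 6.1]{JL24} directly to super fillings, uses that it preserves the signed row contents (hence $m$, $p$ and $x^{|\sigma|}$) and $\maj$ while sending $\minv$ to $\quinv$, and then substitutes into (\ref{E:supJ}) with $\vartheta=\quinv$. Your single-standardization route cannot work, for the reason you flag: $\inv$ compares same-row entries as $I(\mathrm{left},\mathrm{right})$ and $\quinv$ as $I(\mathrm{right},\mathrm{left})$, so they need opposite tie-breaking within a row. The argument therefore rests on your fallback, namely rechecking the flip-operator identities for signed entries, including the new tie cases $I(\overline{a},\overline{a})=1$; the paper likewise asserts this step without giving details.
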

\begin{proof}
	The bijection $\theta$ can be applied to the set $\M(\lambda')$ of super fillings as well. Consequently, (\ref{E:sym1}) holds for $\sigma\in \M(\lambda')$, which implies $\minv(\sigma)=\quinv(\theta(\sigma))$ and $\maj(\sigma)=\maj(\theta(\sigma))$. Therefore, (\ref{E:supJ1}) follows from (\ref{E:supJ}), (\ref{E:HtoJ}) and the identity $\tilde{H}_{\lambda}(X;q,t)=\tilde{H}_{\lambda'}(X;t,q)$ in \cite[Proposition 2.5]{Haiman:99}.
\end{proof}
Having established (\ref{E:supJ1}), we now prove (\ref{macdonald3}). First, we introduce the necessary notation and definitions.
\begin{definition}[$\minv$-non-attacking super fillings]\label{Def:minv_attack}
	For the Young diagram $\dg'(\lambda)$, two boxes $u,v\in \dg'(\lambda)$ are said to be {\em $\minv$-attacking} each other if either
	\begin{enumerate}
		\item[$\mathrm{(I)}$]  $u=(i,j)$, $v=(i,k)$, where $j\ne k$; or
		\item[$\mathrm{(II)}$] $u=(i+1,j)$, $v=(i,k)$, where $k>j$ and $\lambda_j>\lambda_{k}$; or
		\item[$\mathrm{(III)}$] $u=(i+1,j)$, $v=(i,k)$, where $k<j$ and $\lambda_j=\lambda_k$.
	\end{enumerate}
	A $\minv$-non-attacking super filling $\sigma$ is a super filling without two $\minv$-attacking boxes $u,v$ such that $|\sigma(u)|=|\sigma(v)|$.
\end{definition}
\begin{definition}[Reading order]\label{Def:reading}
	 The {\em reading order} is the total ordering on the boxes of Young diagram given by reading them row by row, top to bottom, and left to right within each row of the rectangle, starting from the rightmost rectangle. See Figure \ref{F:reading} for an example.
	\begin{figure}[H]
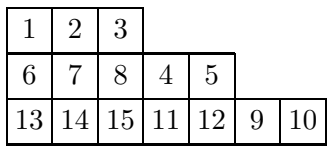

		\centering
		\begin{align*}
			\begin{ytableau}[]1 & 2& 3  \\  6&7&8&4& 5 \\ 13 &14 &15 & 11 &12& 9 &10
			\end{ytableau}
		\end{align*}
		\caption{A diagram with boxes labelled by its reading order.}\label{F:reading}
	\end{figure}
\end{definition}

\begin{lemma}\label{lem1}
	For any partition $\lambda$, 
	let $\M(\lambda')$ be the set of super fillings of $\dg(\lambda')$ and let $\M_1(\lambda')\subseteq \M(\lambda')$ be the set of super fillings $\sigma=\sigma_{\lambda_1}\sqcup \cdots \sqcup \sigma_1$ whose bottom-row entries of each rectangle $\sigma_j$ are distinct for $1\le j\le \lambda_1$. Then,
	\begin{align}\label{eqlem1.1}
		&\quad\sum_{\sigma\in\M(\lambda')}  (-1)^{m(\sigma)} t^{-p(\sigma)-\minv(\sigma)}q^{\maj(\sigma)} x^{|\sigma|}\notag\\
		&=\sum_{\sigma\in\M_1(\lambda')} (-1)^{m(\sigma)} t^{-p(\sigma)-\minv(\sigma)}q^{\maj(\sigma)} x^{|\sigma|}.
	\end{align}
Let $\M_2(\lambda')\subseteq \M_1(\lambda_1)$ be the set of super fillings $\sigma=\sigma_{\lambda_1}\sqcup \cdots \sqcup \sigma_{1}$ whose bottom-row entries of each rectangle $\sigma_j$ are strictly decreasing. Then, 
	\begin{align}\label{eqprop1.1}
		&\sum_{\sigma\in\M_2(\lambda')} (-1)^{m(\sigma)} t^{-p(\sigma)-\minv(\sigma)}q^{\maj(\sigma)} x^{|\sigma|} \notag\\
		&=\sum_{\sigma\in\M_2(\lambda') \atop \sigma \ \minv\text{-non-attacking}} (-1)^{m(\sigma)} t^{-p(\sigma)-\minv(\sigma)}q^{\maj(\sigma)} x^{|\sigma|}.
	\end{align}
\end{lemma}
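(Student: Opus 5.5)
Both identities will be proved by sign-reversing involutions in the style of the HHL superization cancellation (cf.\ the proof of Theorem \ref{thm5.2}). Each involution changes $m(\sigma)$ by $\pm1$ and $p(\sigma)+\minv(\sigma)$ by $0$, preserves $\maj$ and $|\sigma|$, and acts on the complement of the target set. In $\minv$, two columns of equal height contribute inversion triples, so inside a rectangle $\sigma_j$ the relevant structure is that of $\inv$ from \cite{HHL04}. Columns of unequal height contribute queue inversion triples, exactly as for $\quinv$.

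\emph{First identity (\ref{eqlem1.1}).} Take $\sigma\in\M(\lambda')\setminus\M_1(\lambda')$. Scan the rectangles from the right (in the reading order of Definition \ref{Def:reading}) and find the first rectangle $\sigma_j$ whose bottom row has a repeated absolute value. Let $a$ be the smallest repeated absolute value there, and take the leftmost two cells in that row with $|\cdot|=a$. Bottom-row cells are adjoined to $\infty$ below, so changing the sign of such an entry leaves $\maj$ unchanged. The involution flips the sign of one designated copy, namely the second copy, read in the reading order, among the bottom-row cells of $\sigma_j$ with absolute value $a$. Flipping the sign changes $m$ by $\pm1$ and $p$ by $\mp1$. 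We must check that $\minv$ changes by $\pm1$ so that $p+\minv$ is preserved. Only triples involving the flipped entry change. For inversion triples $(a,b,c)$ with $a,c$ in the same row, the two copies $a$ and $\overline a$ compared with each other give $I(a,a)=0$ and $I(\overline a,\overline a)=1$. All other comparisons with entries of different absolute value are unchanged, as in the proof of (\ref{eqthm1.2}). I also need the queue-inversion triples with columns of shorter height to the right to be unaffected. This holds because the entries compared lie in other rectangles, and $I(x,\overline a)=I(x,a)$ whenever $|x|\ne a$. Other entries with absolute value $a$ in other rows or columns require care. This is the same bookkeeping as \cite[Section 8]{HHL04}, and I would organize it by designating the pair via the first collision in reading order.

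\emph{Second identity (\ref{eqprop1.1}).} Restrict to $\M_2(\lambda')$, where the bottom rows are strictly decreasing. Take $\sigma$ that is not $\minv$-non-attacking, and locate the first attacking pair $u,v$ with $|\sigma(u)|=|\sigma(v)|$ in the reading order, with $u$ read before $v$. Since bottom rows are distinct, type (I) collisions occur only above the bottom row, and types (II) and (III) involve a cell and the row below it. I would flip the sign of $\sigma(v)$, the later cell in the reading order. The point of the reading order is that the later cell's comparisons with earlier cells are exactly those that see the repeated value. Flipping $v$ toggles exactly one triple, namely the one where $u$ and $v$ are compared through $I(\cdot,\cdot)$ with equal absolute values. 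When $u$ sits directly above the column of $v$, the flip might also change $\maj$. The attacking conditions (II)/(III) are chosen precisely so that $u$ and $v$ are not in the same column, so $\maj$ is unchanged. Sign flips do not change which pairs collide, so the first collision is the same for $\sigma$ and its image, and the map is an involution. The image stays in $\M_2(\lambda')$ whenever $v$ lies above the bottom row. When $v$ lies in the bottom row, strict decrease of absolute values is unaffected by signs.

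\emph{Main obstacle.} The hardest step is the exact triple accounting: showing that flipping $\sigma(v)$ changes $\minv$ by exactly $\pm1$ in the direction opposite to $p$. The concern is the mixed situation of an inversion triple inside a rectangle adjacent to queue inversion triples across unequal heights, where a single cell participates in both types. I would first handle the two-row rectangular case, using the known $\inv$ superization from Proposition \ref{propquinvJ}. I would then check that cross-rectangle queue triples with an entry $x$ satisfying $|x|\neq|\sigma(v)|$ are invariant. The remaining danger is that $|x|=|\sigma(v)|$ occurs in a cross-rectangle triple; this is exactly what the attacking condition (II) of Definition \ref{Def:minv_attack} together with the choice of the first collision is designed to exclude.
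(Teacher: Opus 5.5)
Your overall strategy (sign-reversing involutions that flip one sign) matches the paper's. The gap is that in both parts you flip the wrong entry, and the choice of entry is the whole content of the lemma. The fact you are missing is that $I(x,y)=I(x,\overline{y})$ for all $x,y\in\A$, and that in $\Q(x,y,z)$ the middle entry $y$ occurs only as a second argument. So changing the sign of the entry in a cell $c$ with $|\sigma(c)|=a$ only affects comparisons in which $c$ is the \emph{first} argument. These are the descent at $c$, which changes $\maj$ when $|\sigma(\South(c))|=a$, together with the cells $w$ with $|\sigma(w)|=a$ that $\minv$-attack $c$ and come \emph{after} $c$ in the reading order of Definition~\ref{Def:reading}. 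Concretely, such $w$ lie either in the same row (to the right in the same rectangle, or in a taller column to the left) or in the row below (to the left in the same rectangle, or in a shorter column to the right). Each such $w$ raises $\minv$ by exactly one when $\sigma(c)$ goes from $a$ to $\overline{a}$, while attacking cells that precede $c$ have no effect. Hence you must flip the \emph{earlier} cell, and choose it so that it is not restricted and exactly one such $w$ exists.

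Your rules break this in both identities. In the first, you flip the right copy $u_2$ of a pair $u_1,u_2$ in the bottom row of $\sigma_j$. The only triple linking them is $(\sigma(u_1),\infty,\sigma(u_2))$, with $\Q=I(\sigma(u_1),\sigma(u_2))$, which does not see the sign of $\sigma(u_2)$. For $\lambda=(1,1)$ (one row of two cells), your map pairs $(k,k)$, of weight $t^{-2}$, with $(k,\overline{k})$, of weight $-t^{-1}$. Also, $\M_1$ only forbids repeats inside each rectangle, so bottom-row copies of $a$ in taller rectangles to the left are allowed. Each of them adds another unit through $I(\sigma(u),\sigma(y))$ in the queue triple $(x,y,u)$, and your appeal to $|x|\neq a$ does not cover this.

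In the second identity, flipping the later cell $v$ of the first attacking pair again leaves the $u$--$v$ comparison untouched. Take $\lambda=(2,2)$ with top row $(k,k)$ and bottom row $(b_1,b_2)$, $b_1>b_2$, $b_i\neq k$: flipping $\sigma(2,2)$ changes $p$ but neither $\minv$ nor $\maj$. With bottom row $(b,k)$, $b>k$, the flipped cell is restricted, so $\maj$ changes by one. The attacking conditions (II)/(III) do nothing to prevent this, since the issue is $\South(v)$, not $u$.

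The paper's rule repairs both problems. Fix $a$, let $v$ be the last cell in reading order with $|\sigma(v)|=a$ attacking another such cell, let $u$ be the last cell attacking $v$, and flip $\sigma(u)$. The reading order is compatible with the attacking relations, so a second equal-value cell after $u$ attacking $u$, or a cell $\South(u)$ of absolute value $a$, would be either an attacking cell after $v$ or a cell after $u$ attacking $v$. Both contradict the choices, so $\minv$ changes by exactly one and $\maj$ is fixed. For the first identity, the same rule applied to bottom-row repetitions flips the second-to-last copy of $a$ in reading order along the whole bottom row.
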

\begin{proof}
	Both (\ref{eqlem1.1}) and (\ref{eqprop1.1}) are proved by a sign-reversing and weight-preserving involution. This involution first appeared in \cite[Lemma 5.1]{HHL04} and was also applied in \cite[Theorem 5.3]{AMM23}. 
	
	We begin by proving (\ref{eqlem1.1}). The involution $\phi:\M(\lambda')\rightarrow \M(\lambda')$ is defined as follows. For any $\sigma\in \M_1(\lambda')$, we define $\phi(\sigma)=\sigma$. Otherwise if $\sigma\in \M(\lambda')-\M_1(\lambda')$, there are two attacking boxes $u,v$ in the bottom row of $\sigma$ such that $|\sigma(u)|=|\sigma(v)|$. Choose $v$ to be the last box in the reading order that $\minv$-attacks another box of the bottom row. Fix $u$ to be the last box in the reading order that $\minv$-attacks $v$. Define $\phi(\sigma)$ to be the super filling after flipping the sign of entry $\sigma(u)$ in box $u$.
	This is clearly a sign-reversing and weight-preserving involution on $\M(\lambda')-\M_1(\lambda')$. It remains to examine the statistics $\minv$ and $\maj$. 
	
	Since $I(a,b)=I(a,\overline{b})$ for all $a,b\in\A$, the major index remains invariant. Without loss of generality, we assume that $\sigma(u)=k\in\Z_+$. It suffices to prove that flipping the sign from $k$ to $\bar{k}\in \mathbb{Z}_{-}$ increases the number of $\minv$-triples by exactly one, thus preserving the quantity $p(\sigma)+\minv(\sigma)$ under $\phi$.
	
	Now assume that the columns of $u$ and $v$ are of equal height. First we consider the inversion triple involving $\sigma(v)$. Since $\Q(k,\infty,\sigma(v))=0$ and $\Q(\overline{k},\infty,\sigma(v))=1$, the number of inversion triples increases by one after flipping the sign. We claim that the change from $k$ to $\bar{k}$ does not affect any other triple containing box $u$.
	
	For any triple $\begin{ytableau} a \\ b  & \none[\dots]& k \\\end{ytableau}$ where the column of $(a,b)$ is longer than the column of $k$, we observe that $|b|\ne k$, because otherwise it contradicts the choice of $u$. Since $|b|\ne k$, we have $\Q(a,b,k)=\Q(a,b,\overline{k})$ for $a,b\in \A$. Similarly,  $\Q(c,k,d)=\Q(c,\overline{k},d)$ holds for any $c,d\in\A$.
	It follows that $\phi$ keeps the number of queue inversion triples located in two columns of unequal height invariant.
	
	For any triple $\begin{ytableau} a & \none[\dots]& c\\ b   \\\end{ytableau}$ 
	 including $k$ but excluding $\sigma(v)$, such that the column lengths of $(a,b)$ and $c$ are equal,  
    we have three scenarios. 
	If $b=k$, then $\Q(a,k,c)=\Q(a,\overline{k},c)$ by noting $I(a,k)=I(a,\overline{k})$ and $I(c,k)=I(c,\overline{k})$. If $c=k$, then $b=\infty$ as box $u$ is in the bottom row. In view of $I(a,k)=I(a,\overline{k})$, we are led to $\Q(a,b,k)=\Q(a,b,\overline{k})$. If $a=k$, then again $b=\infty$. The choice of $u$ determines $|c|\ne k$, thus yielding $I(k,c)=I(\overline{k},c)$ and $\Q(k,\infty,c)=\Q(\overline{k},\infty,c)$. This proves the case when the columns of $u$ and $v$ are of equal height. 

    

	Similarly, if $u$ and $v$ are in two columns of unequal height, then the number of queue inversion triples from two columns of distinct heights increases by one,  while the number of inversion triples from two columns of equal height remains invariant under $\phi$. This completes the proof of (\ref{eqlem1.1}).
	
	The involution for proving (\ref{eqprop1.1}) is slightly different. Define the map $\Phi:\M_2(\lambda')\rightarrow \M_2(\lambda')$ as follows. For any $\sigma\in \M_2(\lambda')$ that is $\minv$-non-attacking, let $\Phi(\sigma)=\sigma$. Otherwise let $a$ be the smallest integer such that $a=|\sigma(u)|=|\sigma(v)|$ for all two $\minv$-attacking boxes $u$ and $v$. Fix $v$ to be the last box in the reading order that $\minv$-attacks another box and $|\sigma(v)|=a$. Choose $u$ to be the last box in the reading order that $\minv$-attacks $v$. Define $\Phi(\sigma)$ as the super filling obtained by changing the sign of $\sigma(u)$. The remaining argument is analogous to the proof of (\ref{eqlem1.1}); hence, we omit the details.
\end{proof}
We now relate (\ref{eqlem1.1}) to (\ref{eqprop1.1}) by the following lemma.
\begin{lemma}\label{lem2}
	For a partition $\lambda$ of $n$, let $\M_2(\lambda')\subseteq \M_1(\lambda')$ be the set of super fillings $\sigma=\sigma_{\lambda_1}\sqcup \cdots \sqcup \sigma_{1}$ whose bottom-row entries of each rectangle $\sigma_j$ are strictly decreasing. Then, 
	\begin{align} \label{eqlem2.1}
		J_{\lambda}(X;q,t)
		&=t^{n(\lambda)+n} \prod_{i\ge 1}[m_i]_t!\notag\\
		&\qquad\times\sum_{\sigma\in\M_2(\lambda')} (-1)^{m(\sigma)} t^{-p(\sigma)-\minv(\sigma)}q^{\maj(\sigma)} x^{|\sigma|}.
	\end{align}
where $m_i$ is the multiplicity of part $i$ in $\lambda$ for $i\ge 1$.
\end{lemma}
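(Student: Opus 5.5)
Starting from the superization formula \eqref{E:supJ1} and then \eqref{eqlem1.1} of Lemma \ref{lem1}, we get $J_{\lambda}(X;q,t)=t^{n(\lambda)+n}\sum_{\sigma\in\M_1(\lambda')}(-1)^{m(\sigma)}t^{-p(\sigma)-\minv(\sigma)}q^{\maj(\sigma)}x^{|\sigma|}$. It therefore suffices to show that the sum over $\M_1(\lambda')$ equals $\prod_{i\ge1}[m_i]_t!$ times the sum over $\M_2(\lambda')$. We will do this by grouping each $\sigma\in\M_1(\lambda')$ with the unique $\tau\in\M_2(\lambda')$ obtained by sorting the bottom row of every rectangle into strictly decreasing order of absolute values; this is possible because these entries are distinct in absolute value.

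The grouping is realized by the inversion flip operators $\zeta_i=\zeta_i^1$ of Definition \ref{Def3.1}, which start at the bottom row. For a rectangle $\tau_j$ of width $m_j$ with strictly decreasing bottom row, and a permutation $w\in S_{m_j}$, we set $\sigma=\zeta_w(\tau)$ using the PDS reduced word of \eqref{E:wflip2}, applying this independently in each rectangle. The key single-step claim is as follows. If columns $i,i+1$ have equal height and bottom entries $c,d$ with $|c|\ne|d|$, then $\zeta_i$ preserves $\maj$, $m(\sigma)$ and $p(\sigma)$, and changes $\minv$ by exactly $\pm1$. The change comes only from the bottom-row pair, through the inversion triples $(\,\cdot\,,\infty,\cdot\,)$ of row $1$. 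This is the inversion analogue of Lemma \ref{lem3.1}, essentially the Loehr--Niese property \cite{LN12} extended to super fillings as in \cite{HHL04}. Since $\zeta_i$ only permutes entries within two columns of equal height, queue inversion triples between columns of unequal height are also unchanged, again in analogy with Lemma \ref{lem3.1}. Consequently $\minv(\zeta_w(\tau))=\minv(\tau)-\ell(w)$ when the bottom row of $\tau_j$ is decreasing. Indeed, row $1$ contributes $\inv$ of the bottom word to $\minv$ via the triples $(a,\infty,c)$, because $I(\infty$-column$)$ forces $\Q(a,\infty,c)=\chi(|a|>|c|)$ up to the sign conventions.

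Summing over all $w$ in $\prod_j S_{m_j}$ and using $\sum_{w\in S_m}t^{\ell(w)}=[m]_t!$ as in \eqref{E:mit} gives the factor $\prod_i[m_i]_t!$, once the powers of $t$ are oriented correctly. The exponent is $-\minv$, so the decreasing (maximal-inversion) representative carries the extreme exponent and the other terms carry $t^{\ell(w)}$. It remains to check that $\sigma\mapsto(\tau,w)$ is a bijection $\M_1(\lambda')\to\M_2(\lambda')\times\prod_j S_{m_j}$. This holds because each $\zeta_i$ is an involution that swaps the bottom entries of columns $i,i+1$, so the bottom-row word of $\zeta_w(\tau)$ is the bottom word of $\tau$ permuted by $w$; using PDS words makes $\zeta_w$ well defined.

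The main obstacle is the single-step claim for super fillings. We must verify that the terminating condition of $\zeta_i$ (comparing $\Q(\sigma(k+1,i),\sigma(k,i),\sigma(k+1,i+1))$ with $\Q(\sigma(k+1,i),\sigma(k,i+1),\sigma(k+1,i+1))$) still forces invariance of all inversion triples above row $1$ when negative letters and repeated absolute values occur in upper rows. It is also not clear a priori that the sign of the $\pm1$ change is governed purely by $|c|$ versus $|d|$. We would handle this by the case analysis of \cite[Lemma 5.3]{JL24}, transported via the symmetry $\theta$ of Theorem \ref{thm1a}, which exchanges $\inv$ and $\quinv$ on rectangles. Equivalently, one may conjugate by $\theta$ and apply Lemma \ref{lem3.1} to the $\rho$ operators directly.
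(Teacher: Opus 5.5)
Your proposal is correct and is essentially the paper's proof. You reduce to $\M_1(\lambda')$ via \eqref{E:supJ1} and \eqref{eqlem1.1}, pair each filling with its bottom-row-sorted representative in $\M_2(\lambda')$ through the PDS-indexed operators $\zeta_w=\zeta_w^1$, and sum the $t$-weights to get $\prod_i[m_i]_t!$ (your $t^{\ell(w)}$ versus the paper's $t^{\coinv(w)}$ is only a convention for indexing $w$). For the step you flag as the obstacle, the paper needs no conjugation by $\theta$: it observes that $\zeta_i(\sigma)=t_i^{[1,k]}(\sigma)$ coincides with $\rho_i^k(\sigma)$, so Lemma~\ref{lem3.1} applies directly. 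That identification also covers the invariance of unequal-height queue inversion triples, which does not follow merely from $\zeta_i$ acting within two columns, since the topmost swapped row mixes the column pairs.
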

\begin{proof}
	From \eqref{E:HtoJ} and \eqref{eqlem1.1}, it is equivalent to establish that
	\begin{align}\label{E:M12}
		&\quad \sum_{\tau\in\M_1(\lambda')} (-1)^{m(\tau)} t^{-p(\tau)-\minv(\tau)}q^{\maj(\tau)} x^{|\tau|}\notag\\
		&=\prod_{i\ge 1}[m_i]_t! \sum_{\sigma\in\M_2(\lambda')} (-1)^{m(\sigma)} t^{-p(\sigma)-\minv(\sigma)}q^{\maj(\sigma)} x^{|\sigma|}.
	\end{align}
The argument is analogous to that for (\ref{eqthm2.2}). Recall the flip operators $\zeta_{w}=\zeta_w^1$, which are indexed by a permutation $w$ as defined in (\ref{E:wflip2}). 
We note that 
\begin{align}\label{E:zeta1}
\zeta_i(\sigma)=\rho_i^k(\sigma), 
\end{align}
where $\zeta_i(\sigma)=t_i^{[1,k]}(\sigma)$. Consequently, $\zeta_w$ is a product of queue-inversion flip operators and, by Lemma \ref{lem3.1}, preserves the number of queue inversion triples arising from two columns of unequal height.

For any super filling $\sigma=\sigma_{\lambda_1}\sqcup \cdots \sqcup \sigma_1\in \M_2(\lambda')$, let $a=(a_1>\cdots >a_n)$ be the bottom row of a rectangle $\sigma_j$, and let $w$ be the shortest permutation for which $(a_{w^{-1}(1)},\ldots,a_{w^{-1}(n)})=(w_1,\ldots,w_n)$.

Lemma \ref{lem3.1} and (\ref{E:zeta1}) imply that 
 \begin{align}\label{E:sort3}
 	\maj(\zeta_w(\sigma))&=\maj(\sigma),\\
 	\minv(\zeta_{w}(\sigma))-\minv(\sigma)
 	&=\inv(w)-\inv(a)=-\coinv(w).\label{E:sort4}
 \end{align}
For any super filling $\tau\in \M_1(\lambda')$, let $w\in S_{m_{\lambda_1}}\times \cdots \times S_{m_1}$ be the shortest permutation to sort the bottom-row entries of $\tau$ in decreasing order. This gives rise to a unique super filling $\sigma=\zeta_{w}^{-1}(\tau)\in \M_2(\lambda')$. Recall that $\tau\in\langle\sigma\rangle$ if and only if $\tau_j\in [\sigma_j]$ for all $1\le j\le \lambda_1$.
It follows from (\ref{E:sort3}) and (\ref{E:sort4}) that 
\begin{align*}
	\sum_{\tau\in\langle\sigma\rangle}q^{\maj(\tau)}t^{-\minv(\tau)}x^{\tau}
	&=q^{\maj(\sigma)}t^{-\minv(\sigma)}x^{\sigma} \sum_{w\in S_{m_{\lambda_1}}\times \cdots \times S_{m_1}}t^{\coinv(w)}\\
	&=\prod_{i\ge 1}[m_i]_t!\,q^{\maj(\sigma)}t^{-\minv(\sigma)}x^{\sigma},
\end{align*}
and we conclude (\ref{E:M12}) by noting $p(\tau)=p(\sigma)$ and $m(\tau)=m(\sigma)$.
\end{proof}
Finally, we compress the terms on the right-hand side of (\ref{eqprop1.1}) to restrict the sum to $\minv$-non-attacking positive fillings.

\begin{lemma}\label{prop2}
	Let $\lambda$ be a partition of $n$, then
	\begin{align}\label{eqprop2.1}
		&\sum_{\sigma\in\M_2(\lambda') \atop \sigma \ \minv\text{-non-attacking}} (-1)^{m(\sigma)} t^{n-p(\sigma)-\minv(\sigma)}q^{\maj(\sigma)} x^{|\sigma|} \notag\\
		&= \sum_{\tau\in \T(\lambda')\cap\M_2(\lambda')  \atop \tau \ \minv\textrm{-non-attacking} } q^{\maj(\tau)} t^{-\minv(\tau)} x^{\tau} \notag\\
		&\,\qquad\times\prod_{\tau(z)=\tau(\South(z))\atop \text{ and }z \not\in\, \text{row } 1} (1-q^{\leg(z)+1} t^{1+\arm(\South(z))}) \prod_{\tau(z)\ne \tau(\South(z))\,\atop\text{ or }\,z \in\, \text{row } 1}(1-t).
	\end{align}
\end{lemma}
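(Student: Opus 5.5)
We compress the left-hand side of (\ref{eqprop2.1}) exactly as in the proof of (\ref{eqthm1.2}): fix a positive $\minv$-non-attacking filling $\tau\in\T(\lambda')\cap\M_2(\lambda')$ and sum the weights of all super fillings $\sigma$ with $|\sigma|=\tau$. First, the set of such $\sigma$ is the whole fibre $\{\sigma:|\sigma|=\tau\}$. The $\minv$-non-attacking condition and membership in $\M_2(\lambda')$ depend only on absolute values, because strict decrease of the bottom rows is unaffected by signs once the entries are distinct in absolute value. The sum therefore factors box by box, provided we show that the sign of each entry affects $\maj$ and $\minv$ only through that box's own contribution. The factor $t^{n}$ is absorbed by writing $t^{n-p(\sigma)}=\prod_{s}t^{1-\chi(\sigma(s)\in\Z_+)}$, one factor per box.

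For an unrestricted box $s$, that is $\tau(s)\ne\tau(\South(s))$ (including every bottom-row box, whose $\South$ is $\infty$), we have $I(\sigma(s),\sigma(\South(s)))=I(\tau(s),\tau(\South(s)))$, so $\maj$ is unchanged. We then check that every inversion triple (equal-height columns) and queue inversion triple (unequal heights) containing $\sigma(s)$ has its $\Q$-value independent of the sign. This follows from $I(a,b)=I(a,\overline b)$ together with the $\minv$-non-attacking hypothesis, which guarantees that the other entries of the triple differ from $|\sigma(s)|$ in absolute value, in the attacking positions (I)--(III) of Definition \ref{Def:minv_attack}. The two sign choices contribute $1$ and $-t$, giving the factor $(1-t)$.

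For a restricted box $s$ with $s$ not in row $1$, a positive entry changes nothing. A negative entry creates a descent, adding $\leg(s)+1$ to $\maj$, and it changes exactly the triples $(\sigma(s),\sigma(\South(s)),c)$ in which $s$ is the top box $a$ with $b=\sigma(\South(s))$. By the argument in the proof of (\ref{eqthm1.2}), $I(a,b)$ flips from $0$ to $1$ while $I(c,b)\ne I(a,c)$ since $|c|\ne|b|$ by non-attacking, so each such triple gains one triple. Hence $\minv$ increases by the number of admissible $c$. These are the boxes $c$ to the right of $s$ in the row of $s$ among equal-height columns (inversion triples), together with the boxes to the right of $\South(s)$ in the row below among shorter columns (queue inversion triples). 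We must check that the total is exactly $\arm(\South(s))$, that is, all boxes to the right of $\South(s)$ in its row. For equal-height columns, the box above a box of the lower row to the right lies in the same rectangle, so the two counts coincide. Triples in which $s$ plays the role $b$ or $c$ are sign-independent by the same non-attacking check as above. The contribution of the box is then $1-q^{\leg(s)+1}t^{\arm(\South(s))+1}$, where the extra $t$ comes from $t^{1-0}$ for a negative entry. Multiplying these factors over all boxes gives (\ref{eqprop2.1}).

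The main obstacle is the case analysis showing that a sign change in one box affects no triple other than those counted above. This is especially delicate for mixed configurations, where the box lies at the boundary between a rectangle and a shorter column. It is also delicate when $s$ serves as $b$ in a queue inversion triple whose $c$ lies in a lower row, where we need the specific attacking conditions (II) and (III) of Definition \ref{Def:minv_attack}. I would first verify this on two-row fillings, then argue that rows interact only pairwise, as in Lemma \ref{lem3.1}.
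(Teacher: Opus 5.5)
Your plan follows the paper's own argument. You fix a positive $\minv$-non-attacking $\tau\in\T(\lambda')\cap\M_2(\lambda')$, sum over the sign choices of $\sigma$ with $|\sigma|=\tau$, and factor box by box. Your remark that membership in $\M_2(\lambda')$ is sign-independent fills a point the paper leaves implicit.

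\textbf{The key computation has the wrong sign.} Take a restricted box $s$ not in row $1$, put $a=\sigma(s)$ and $b=\sigma(\South(s))$, and let $c$ be an admissible third entry, so $|a|=|b|\ne|c|$. Then exactly one of $I(c,b)=0$ and $I(a,c)=0$ holds. If $a\in\Z_+$, then $I(a,b)=0$, so exactly one of the three conditions holds and $\Q(a,b,c)=1$. If $a\in\Z_-$, then $I(a,b)=1$, so two conditions hold and $\Q(a,b,c)=0$. Making the entry negative therefore \emph{destroys} each of these $\arm(\South(s))$ triples, and $\minv$ \emph{decreases} by $\arm(\South(s))$. The proof of (\ref{eqthm1.2}) that you cite reaches the same conclusion: there the contribution is $-\widehat{\arm}(s)$.

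This decrease is exactly what produces the factor $t^{1+\arm(\South(s))}$ from $t^{n-p(\sigma)-\minv(\sigma)}$ for the negative choice. With the increase you assert, the negative choice would contribute $-q^{\leg(s)+1}t^{1-\arm(\South(s))}$. So the factor $1-q^{\leg(s)+1}t^{1+\arm(\South(s))}$ you write down does not follow from your own computation. Replace ``gains''/``increases'' by ``loses''/``decreases'' and the step is correct. Your count of the admissible $c$ as $\arm(\South(s))$ is fine.

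\textbf{The ``main obstacle'' you anticipate does not arise, and no two-row reduction is needed.} Since $I(x,y)=I(x,|y|)$ for all $x,y\in\A$, the sign of an entry can affect $\Q(a,b,c)$ only in three situations:
\begin{enumerate}
\item the entry sits at $a$ with $|a|=|b|$;
\item the entry sits at $a$ with $|a|=|c|$;
\item the entry sits at $c$ with $|c|=|b|$.
\end{enumerate}
The entry in position $b$ never matters. Situations (2) and (3) are excluded by conditions (I)--(III) of Definition \ref{Def:minv_attack}. For equal-height columns, $c$ lies one row above and to the right of $b$, which is case (III). For unequal heights, $a$ and $c$ are in case (II). Also, in a queue inversion triple $c$ is always in the row of $b$, never below it. So the only sign-sensitive triples are the ones you already counted.
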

\begin{proof}
	For any $\minv$-non-attacking positive filling $\tau\in\T(\lambda')$, we sum the contributions from all $\minv$-non-attacking super fillings $\sigma\in\M(\lambda')$ satisfying $|\sigma|=\tau$.

	Each super filling $\sigma$ with $|\sigma|=\tau$ is generated by choosing,  for every box $z$ (processed from top to bottom, and left to right within each row), either $\sigma(z)=\tau(z)\in \mathbb{Z}_{+}$ or $\sigma(z)=\overline{\tau(z)}\in \mathbb{Z}_{-}$.
		
	For each restricted box $z$, set $u=\tau(z)=\tau(\South(z))$. If $\sigma(z)=u\in\Z_+$, then we have $I(u,\sigma(\South(z)))=I(u,\tau(\South(z)))=I(u,u)=0$; hence, a positive entry $\tau(z)$ in box $z$ makes no contribution to $\maj(\sigma)-\maj(\tau)$. If $\sigma(z)=\overline{u}\in \Z_{-}$, then $I(\overline{u},\sigma(\South(z)))=1$ while $I(u,\tau(\South(z)))=0$, so that $\maj(\sigma)-\maj(\tau)$ increases by $\leg(z)+1$. Next, we compute the contribution of $\sigma(z)$ to $\minv(\sigma)-\minv(\tau)$. 
	
	If $\sigma(z)=u\in\Z_+$, we consider the triples $(u,v,x)=\begin{ytableau} u & \none[\dots]& x\\ v   \\\end{ytableau}$ 
	of $\sigma$ in two columns of equal height, and the triples $(u,v,x)=\begin{ytableau} u \\ v  & \none[\dots]& x \\\end{ytableau}$
	in two columns of unequal height.
	In both types, $u=|v|$ and the $\minv$-non-attacking condition implies that $|v|\ne |x|$. Consequently, $\Q(u,v,x)=\Q(u,|v|,|x|)$ for all $v,x\in \A$. Therefore, fixing $\sigma(z)=u$ does not alter the number of inversion and queue inversion triples; hence, a positive entry $\sigma(z)$ contributes zero to $\minv(\sigma)-\minv(\tau)$.
	
	If $\sigma(z)=\overline{u}\in\Z_-$, we analyze the corresponding triples $(\overline{u},v,x)$
	in the same manner. Again, $|u|=|v|\ne |x|$ by the $\minv$-non-attacking condition, which gives $I(x,v)\ne I(u,x)$. Since $I(\overline{u},v)=1$ but $I(u,|v|)=I(u,u)=0$, we find $\Q(\overline{u},v,x)=0$ whereas $\Q(u,|v|,|x|)=1$. Thus, for each possible $x$, the number of such triples decreases by one. Since there are $\arm(\South(z))$ choices for $x$, a negative entry $\sigma(z)$ contributes $-\arm(\South(z))$ to $\minv(\sigma)-\minv(\tau)$.
	
	For each unrestricted box $z$, we have $\tau(z)\ne \tau(\South(z))$. Consequently, $I(\sigma(z),\sigma(\South(z)))$ $=I(\tau(z), \tau(\South(z)))$ for all $\sigma(z)\in \A$, and thus box $z$ makes no contribution to $\maj(\sigma)-\maj(\tau)$. 
	Let $u=\sigma(z)$ and $v=\sigma(\South(z))$, then $|u|\ne |v|$. Since $\tau$ is $\minv$-non-attacking, the absolute values $|u|$, $|v|$ and $|x|$ are distinct for any triple $(u,v,x)$ in $\sigma$. Hence, $\Q(u,v,x)=\Q(|u|,|v|,|x|)$, which means that $\sigma(z)$ contributes zero to $\minv(\sigma)-\minv(\tau)$.
	
	In terms of generating function, the preceding analysis yields
	\begin{align*}
		&t^{n}\sum_{\sigma\in\M_2(\lambda') \atop \sigma \ \minv\text{-non-attacking}} (-1)^{m(\sigma)} t^{-p(\sigma)-\minv(\sigma)}q^{\maj(\sigma)} x^{|\sigma|} \\
		&=t^{n} \sum_{\tau\in \T(\lambda')\bigcap\M_2(\lambda') \atop  \tau\  \minv\textrm{-non-attacking} } q^{\maj(\tau)} t^{-\minv(\tau)} x^{\tau} \\
		&\qquad\times
		\prod_{\tau(z)\ne \tau(\South(z))\atop \text{ or } z \in\, \text{row }1 }(t^{-1}-1)  \prod_{\tau(z)=\tau(\South(z)) \atop \text{ and }z \not\in\, \text{row }1 } (t^{-1}-q^{\leg(z)+1} t^{\arm(\South(z))} ),
	\end{align*}
	which is equivalent to \eqref{eqprop2.1}. 
\end{proof}
We are now ready to prove (\ref{macdonald3}) and (\ref{macdonald2}) of Theorem \ref{thm:twoP}.

{\em Proof of (\ref{macdonald3}) and (\ref{macdonald2})}. Equations \eqref{eqlem2.1}, \eqref{eqprop1.1} and \eqref{eqprop2.1} prove that 
\begin{align}\label{eqpfthm}
	J_{\lambda}(X;q,t)&=\prod_{i\ge 1}[m_i]_t!\, t^{n(\lambda)} \sum_{\tau\in \T(\lambda')\bigcap\M_2(\lambda')  \atop \tau \ \minv\textrm{-non-attacking} } q^{\maj(\tau)} t^{-\minv(\tau)} x^{\tau} \notag\\
	&\qquad \times\prod_{\tau(z)=\tau(\South(z)) \atop \text{ and }z \not\in \,\text{row} 1} (1-q^{\leg(z)+1} t^{1+\arm(\South(z))}) \prod_{\tau(z)\ne \tau(\South(z))\atop \text{ or }z \in \,\text{row} 1}(1-t).
\end{align}
In combination with (\ref{E:JP}) and (\ref{eqJ.2}), we find
\begin{align} \label{eqP.2}
	P_{\lambda}(X;q,t)&=\sum_{\sigma\in \T(\lambda')\cap\M_2(\lambda')  \atop \sigma \ \minv\textrm{-non-attacking} } q^{\maj(\sigma)} t^{\overline{\minv}(\sigma)} x^{\sigma}\notag\\
	&\qquad\times\prod_{\sigma(z)\ne\sigma(\South(z)) \atop z \not\in \,\textrm{row }1}  \frac{1-t}{1-q^{\leg(z)+1} t^{\arm(\South(z))+1}}.
\end{align}
We shall prove the equivalence of \eqref{macdonald3} and \eqref{eqP.2}. 
By Definitions \ref{Def:minv_attack} and \ref{Def:non-attacking},  a positive filling $\sigma$ is $\minv$-non-attacking, if and only if $\rev(\sigma)$ is $\quinv$-non-attacking. Moreover, the bottom-row entries of each rectangle $\sigma_j$ of $\sigma$ are strictly decreasing, if and only if the bottom-row entries of each rectangle $\rev(\sigma_j)$ are strictly increasing. 

Since each type $A$ triple of $\rev(\sigma)$ corresponds to a non-inversion triple 
of $\sigma$, while each type $B$ triple of $\rev(\sigma)$ corresponds to a non-queue-inversion triple of $\sigma$, it follows that $\overline{\minv}(\sigma)=\coinv^*(\rev(\sigma))$. Finally, consider a box $u$ of $\sigma$ above the bottom row, and let $v$ be its corresponding box under the reverse map. Then, by Definition \ref{Def:arm-lengths}, $\arm(\South(u))=\widetilde{\arm}(v)$. Consequently, (\ref{macdonald3}) and (\ref{eqP.2}) are equal, completing the proof of (\ref{macdonald3}).

The proof of (\ref{macdonald2}) proceeds analogously; we merely indicate the necessary modifications to Lemmas \ref{lem1} -- \ref{prop2}. For the statistic $\quinv$, define $\M_1(\lambda')$ (resp. $\M_2(\lambda')$) as the set of super fillings $\sigma=\sigma_{\lambda_1}\sqcup\cdots\sqcup \sigma_1$ in which the top-row entries of each rectangle $\sigma_j$ are distinct (resp. strictly increasing). Then Lemmas \ref{lem1} -- \ref{prop2} remain valid upon replacing $\minv$ with $\quinv$. Specifically, in the proof of Lemma \ref{lem2}, the inversion flip operator $\zeta_w$ is replaced by the queue inversion flip operator $\rho_w$. This establishes (\ref{macdonald2}).

\qed

\section{Final remarks}\label{S:finalremark}

Corteel, Mandelshtam and Williams posed an open problem \cite[Remark 1.14]{CMW22} on a multiline queue formula for nonsymmetric Macdonald polynomials $E_{\gamma}(X;q,t)$, as a generalization of their formula for $E_{\lambda}(X;q,t)$ indexed by a partition $\lambda$. Their multiline queue formula is equivalent to the one in Corollary \ref{cor:10.1} for  $\vartheta=
\eta^{\circ}$ where $\S=\S_7$. This problem remains open, though our tableau formula (\ref{E:quadinv}) for $E_{\gamma}(X;q,t)$ extends theirs.

Our $(q,t)$-formula (\ref{E:vsP}) for the coefficient $c_{\lambda\mu}(q,t)=[m_{\mu}]P_{\lambda}$ is different from the summation formula of de Gier and Wheeler \cite{dGW:15}. Moreover, as in Theorem \ref{cor:1}, we can also reduce the sum to sorted non-attacking tableaux in the combinatorial formula (\ref{E:quadinv}) for $E_{\lambda}(X;q,t)$ when $\lambda$ is a partition. The reason we cannot provide such a formula for an arbitrary weak composition $\gamma$ is that the coefficient $d_{\sigma}(t)$ would become complicated if the diagram of $\gamma$ is not a partition shape.

\section*{Acknowledgements}
We would like to thank Alexandr Garbali for bringing the paper \cite{dGW:15} to our attention. Both authors are supported by the National Nature Science Foundation of China (NSFC), Projects No. 12201529 and No. 12571358.

\end{document}